\documentclass[a4paper,12pt]{article}
\usepackage{cite}
\usepackage{amsmath,dsfont}
\usepackage{amssymb}
\usepackage{amsthm}
\usepackage{graphicx}
\usepackage{caption}
\usepackage{subcaption}
\usepackage{float}
\usepackage[linkcolor=blue, urlcolor=blue, citecolor=blue,
 colorlinks, bookmarks]{hyperref}
\usepackage{amsfonts}
\usepackage[top=2cm,bottom=2cm,left=2cm,right=2cm]{geometry}
\usepackage{tabularx}
\everymath{\displaystyle}
\newtheorem{Theorem}{Theorem}
\numberwithin{equation}{section} 
\numberwithin{Theorem}{section}
\newtheorem{Definition}{Definition}
\numberwithin{Definition}{section}

\numberwithin{Algorithm}{section}

\numberwithin{Lemma}{section} \numberwithin{Proposition}{section}
\newtheorem{Example}{Example}
\newtheorem{Remark}{Remark}
\numberwithin{Remark}{section}
\numberwithin{Example}{section}

\numberwithin{Corollary}{section}

\usepackage{fancyhdr}
\usepackage[ruled,vlined]{algorithm2e}

\include{pythonlisting}

\SetCommentSty{mycommfont}

%\numberwithin{equation}{section} \linespread{1.2}\Large
\begin{document}
%\title{\bf Iterative schemes for common solutions of split equality, variational inclusion, and fixed point problems }
%\maketitle
%\author{\bf Pankaj Gautam$^{1}$\thanks{pgautam908@gmail.com}, T. Som \thanks{tsom.apm@itbhu.ac.in}, D. R. Sahu\thanks{drsahudr@gmail.com}, Avinash Dixit\thanks{discover.avi99@gmail.com}
    %Department of Mathematical Sciences, Indian Institute of Technology\\
     %   (Banaras Hindu University)\\
      %  Varanasi $-221005$, India}
%\date{today}
%\maketitle
\date{}
%\maketitle
\title{\bf Computational Wavelet Method for Multidimensional Integro-Partial Differential Equation of Distributed Order}
\author{
Yashveer Kumar$^1$ \footnote{Author's e-mail: yashveerkumar.iitbhu@gmail.com},
Somveer Singh$^1$ \footnote{Author's e-mail: rathaurbhu.90@gmail.com},
Reshma Singh$^2$ \footnote{Author's e-mail: reshma.gzp@gmail.com},
%Nikhil Srivastava$^1$\footnote{Author's e-mail: nikhils.iitbhu@gmail.com },
%Aman Singh$^1$\footnote{Author's e-mail: iitbhuaman@gmail.com},\\
%Emran Tohidi$^2$ \footnote{Author's e-mail: etohidi110@yahoo.com},
Vineet Kumar Singh$^{1}$ \footnote{Corresponding author, e-mail: vksingh.mat@iitbhu.ac.in}
%Avinash Dixit$^\flat$ \footnote{Author's e-mail: discover.avi92@gmail.com}\\
\\
$^1$\textit{\small{Department of Mathematical Sciences, Indian Institute of Technology}} \\
\textit{\small{(Banaras Hindu University), Varanasi, India.}} \\
$^2$\textit{\small{Department of Mathematics, Ram Briksh Benipuri Mahila College, Muzaffarpur}} \\
\textit{\small{Babasaheb Bhimrao Ambedkar Bihar University, Muzaffarpur India.}} \\
%$^\Diamond$ \textit{\small{Department of Mathematical Sciences, Indian Institute of Technology }} \\
%\textit{\small{(Banaras Hindu University), Varanasi, India.}} \\
%$^2$\textit{\small{Department of Mathematics, Kosar university of Bojnord, Bojnord, Iran.}} \\
%$^2$ \textit{\small{Centre for Advanced Biomaterials and Tissue Engineering, Indian Institute of Technology}} \\ 
%\textit{\small{(Banaras Hindu University), Varanasi, India.}} 
}
\maketitle
\begin{abstract}
This article provides an effective computational algorithm based on Legendre wavelet (LW) and standard tau approach to approximate the solution of multi-dimensional distributed order time-space fractional weakly singular integro-partial differential equation (DOT-SFWSIPDE). To the best of our understanding, the proposed computational algorithm is new and has not been previously applied for solving DOT-SFWSIPDE. The matrix representation of distributed order fractional derivatives, integer order derivatives and weakly singular kernel associated with the integral based on LW are established to find the numerical solutions of the proposed DOT-SFWSIPDE. Moreover, the association of standard tau rule and Legendre-Gauss quadrature (LGQ) techniques along with constructed matrix representation of  differential and integral operators diminish DOT-SFWSIPDE into system of linear algebraic equations. Error bounds, convergence analysis, numerical algorithms and also error estimation of the DOT-SFWSIPDE are regorously investigated. For the reliability of the proposed computational algorithm, numerous test examples has been incorporated in the manuscript to ensure the robustness and theoretical results of proposed technique.   

%The approximate solution of the multidimensional time-space distributed order fractional weakly singular integro differential equation using LWs basis function is explored for the first time in this work. LW operational matrices for integer order derivative, singular integral term, and distributed order fractional derivative are generated. Furthermore, the distributed order time-space fractional weakly singular integro differential equation (DOT-SFIDE) is reduced to a system of linear algebraic equations using the Gauss Legendre quadrature formula and the standard Tau technique, as well as the resulting operational matrices. The suggested approach's convergence analysis, error bounds, and error estimation are all thoroughly described. The proposed technique is applied to four test examples, and numerical experiments validate the theoretical results and demonstrate the robustness of the proposed method. The present approach's results are shown to be more accurate.

\medskip \noindent{\bf Keywords:} Multi-dimensional distributed order time-space fractional weakly singular integro-differential equation, Fractional order Caputo derivative, Legendre wavelets, Operational matrices, Convergence analysis, Error estimation.
\end{abstract}

\section{ Introduction}
 Fractional differential and integral models have sparked a lot of interest, because of their applications in many fields of science, finance as well as in engineering \cite{podlubny1998fractional}. There are some fascinating implementations of fractional calculus in viscoelasticity model \cite{bagley1985fractional}, electromagnetic waves \cite{ichise1971analog}, chaotic systems \cite{hajipour2018efficient}, physical systems \cite{baleanu2017motion}, optimization \cite{jajarmi2018new}, nonlinear dynamical systems \cite{baleanu2018nonlinear}, in the modeling of heat transfer \cite{sierociuk2013modelling}, and dynamics of interfaces between nano particles and substrates \cite{chow2005fractional}. Furthermore, the use of fractional calculus in viscoelasticity has emerged as a promising area of research. For example, fractional derivatives without singular kernels, have been proposed as mathematical methods for describing viscoelasticity models. A new fractional-order algorithm to explain the dynamic behaviours of general fractional-order viscoelasticity with memory effect, Maxwell and Voigt models are suggested and used within the framework of general fractional derivatives \cite{yang2017new}. A new model is presented in \cite{gao2016fractional} to demonstrate the efficiancy of fractional-order operators in the case of line viscoelasticity.

Fractional differential equations (FDEs) and fractional integral equations have attracted the interest of many researchers due to their practical applications in various fields of science and engineering. Although some techniques exist for obtaining analytical solutions to some FDEs, analytical solutions to FDEs remain unknown in the overwhelming majority of cases.  As a result, several scholars have devised numerous computational methods for obtaining approximate solutions to fractional order integral and fractional order differential equations. The most commonly used methods are variational iteration method \cite{odibat2010study}, generalized  transform
method \cite{momani2007generalized,odibat2008generalized}, adomian decomposition method \cite{garg2011solution,ray2005approximate}  and wavelet method \cite{chen2012error,babolian2007numerical} . A. Saadatmandi \& M. Dehghan debated on the solution of space-fractional diffusion equation with Caputo derivative by the tau approach \cite{saadatmandi2011tau}. In \cite{srivastava2021efficient}, authors discussed semi-discrete scheme for Riesz- FDE. In \cite{dehghan2019error}, a finite element/finite difference scheme has been proposed to solve the 2D time and space fractional  partial integro differential equation with  weakly singular kernel. {For more study about the methods to solve FDE readers one can see \cite{abbaszadeh2019meshless,abbaszadeh2019alternating,dehghan2010solving,sun2016some}}. 
%The authors of \cite{abbaszadeh2019meshless} constructed a meshless upwind local radial basis function \textcolor{red}{for} finite difference technique. The main objective of \cite{abbaszadeh2019alternating} is to use the alternating path implicit-spectral factor approach to solve the multi-dimensional generalised modified sub-diffusion equation (ADI-SEM). The homotopy analysis approach was used by the authors of \cite{dehghan2010solving} to solve certain nonlinear fractional models with initial conditions, such as fractional KdV, K(2,2), Burgers, BBM-Burgers, coupled KdV, and Boussinesq-like B(m,n) equations. Several difference schemes for solving one and two-dimensional space and time fractional Bloch–Torrey equations were suggested by the authors of \cite{sun2016some}. Through adding two new variables to turn the original equation into a low order system of equations, the authors of \cite{sun2006fully} developed a difference scheme to discretize a diffusion-wave system based on the time fractional derivative. In \cite{dehghan2019error}, a finite element/finite difference scheme has been proposed to solve the 2D time and space fractional  partial integro differential equation with  weakly singular kernel.

 Now a days, distributed order operator is an attractive tool to explain the physical phenomena of mathematical models governed by the fields of science, finance and engineering. The distributed order fractional (DOF) derivative has a long and illustrious history. In 1969, Caputo was the first to introduce the concept of DOF operator, and in 1995, he was also the first to solve it.  The distributed order fractional differential equation (DOFDE) is stated in its generic form as \cite{jiao2012distributed}
\begin{equation*}
\sum_{i=1}^k a_i\int_{\alpha_1}^{\alpha_2}\rho_i(\alpha)D_{\varrho}^{i-\alpha}\mathbb{U}(\varkappa,\varrho)d\alpha+\sum_{j=0}^kb_j\mathbb{U}^{(j)}(\varkappa,\varrho)=f(\varkappa,\varrho),
\end{equation*}

where, $\rho_i(\alpha)$ denotes the  weight function of distribution of order $\alpha\in[\alpha_1,\alpha_2]$ and $k\in\mathbb{Z^+}$. Hence, the above equation can be viewed as a generalisation form of
\begin{itemize}
	\item If $\rho_i(\alpha)\equiv 0$, then we get differential equation of integer order.
	\item If $\rho_i(\alpha)$ takes any discrete values in $[\alpha_1,\alpha_2]$, then we get FDE's. 
\end{itemize}
As a result, differential equation with integer and non-integer order can be considered as special cases of distributed order fractional  differential equations (DOFDE).  In the fields of engineering, science and financial mathematics, distributed-order differential equations have a wide range of applications. For instance, they are used in the modelling of  dielectric induction and diffusion \cite{Caputo2001}. In 2004, Sokolov and Chechkin \cite{sokolov2004distributed} debated on the DOF kinetics.  Umarov et al.  provided  randam walk models \cite{umarov2006random} are governed with the help of  DOFDE. The financial mathematical model goverened with the help of  DOF derivative defined in \cite{morgado2019black} considered time DOF Black-Scholes equation. With the development of DOFDE, various numerical methods were constructed for their solutions. In \cite{kumar2020wavelet} the authors presented a numerical wavelet scheme for DOFDE. In \cite{abbaszadeh2019error}, Riesz-space DOFDE using second-order finite difference scheme has been proposed. For solving the time DOF advection-diffusion equation, the authors of \cite{gao2017temporal} discovered a special point for the linear combination of multi-term fractional derivatives interpolation approximation and obtained a numerical differentiation formula with second-order precision. In \cite{abbaszadeh2020crank} authors applied Crank-Nicolson/Galerkin spectral method for solving two-dimensional time-space DOF integro-partial differential equation with weakly singular by using Riesz derivative in the space direction, whereas in \cite{saadatmandi2011tau} authors used the tau approach for solving space fractional diffusion equation by using Caputo derivative in space. In this work, we consider the following DOT-SFWSIPDE \cite{abbaszadeh2020crank, christensen2012mechanics, miller1978integrodifferential, renardy1989mathematical} in 1D \& 2D, using Caputo derivative in both directions define as,\\
\begin{align}\label{Eqn:01.1}
\int_{\alpha_1}^{\alpha_2}\rho(\alpha)\frac{\partial^{\alpha}\mathbb{U}(\varkappa,\varrho)}{\partial \varrho^{\alpha}}d\alpha+\mathbb{U}(\varkappa,\varrho)=\mathcal{K^*}\int_{\beta_1}^{\beta_2}\rho(\beta)\frac{\partial^{\beta}\mathbb{U}(\varkappa,\varrho)}{\partial \varkappa^{\beta}}d\beta&+\int_0^\varrho(\varrho-\xi)^{-\frac{1}{2}}\left[\frac{\partial^2\mathbb{U}(\varkappa,\xi)}{\partial \varkappa^2}\right]d\xi\nonumber\\
&+f(\varkappa,\varrho),
\end{align}
where, $\mathcal{K^*}$ is viscosity constant and $(\varkappa,\varrho)\in\Omega,~~ \alpha_1=0, \alpha_2=1, \beta_1=1, \beta_2=2$ and $\Omega=[0,1]\times[0,T].$

The above  equation \ref{Eqn:01.1} is  with the initial condition
\begin{eqnarray}\label{Eqn:01.2}
\mathbb{U}(\varkappa,0)=\nu(\varkappa) ,~~  0\textless \varkappa \textless 1,
\end{eqnarray}
and Dirichlet boundary conditions 
\begin{eqnarray}\label{Eqn:01.3}
\mathbb{U}(0,\varrho)=\mathfrak{p_1}(\varrho),~~~0\textless \varrho\textless T,
\end{eqnarray}
\begin{eqnarray}\label{Eqn:01.4}
\mathbb{U}(0,\varrho)=\mathfrak{p_2}(\varrho), ~~0\textless \varrho\textless T.
\end{eqnarray}

The 2D form of the above problem is defined as
\begin{eqnarray}\label{Eqn:1.1}
\int_{\alpha_1}^{\alpha_2}\rho(\alpha)\frac{\partial^{\alpha}\mathbb{U}(\varkappa,\eta,\varrho)}{\partial \varrho^{\alpha}}d\alpha+\mathbb{U}(\varkappa,\eta,\varrho)=\mathcal{K^*}\int_{\beta_1}^{\beta_2}\rho(\beta)\left[\frac{\partial^{\beta}\mathbb{U}(\varkappa,\eta,\varrho)}{\partial \varkappa^{\beta}}+\frac{\partial^{\beta}\mathbb{U}(\varkappa,\eta,\varrho)}{\partial \eta^{\beta}}\right]d\beta\nonumber\\
+\int_0^\varrho(\varrho-\xi)^{-\frac{1}{2}}\left[\frac{\partial^2\mathbb{U}(\varkappa,\eta,\xi)}{\partial \varkappa^2}+\frac{\partial^2\mathbb{U}(\varkappa,\eta,\xi)}{\partial \eta^2}\right]d\xi+f(\varkappa,\eta,\varrho),
\end{eqnarray}
where, $\mathcal{K^*}$ is viscosity constant and $(\varkappa,\eta,\varrho)\in \Omega,\alpha_1=0, \alpha_2=1, \beta_1=1, \beta_2=2$ and $\Omega=[0,1]\times[0,1]\times[0,T]$.

The initial condition for the above  equation \ref{Eqn:1.1} is 
\begin{eqnarray}\label{Eqn:1.2}
\mathbb{U}(\varkappa,\eta,0)=\nu(\varkappa,\eta) ,~~  0\textless \varkappa \textless 1~and~0\textless\eta\textless 1
\end{eqnarray}
and the boundary conditions are 
\begin{eqnarray}\label{Eqn:1.3}
\mathbb{U}(0,\eta,\varrho)=p_1(\eta,\varrho),~~~0\textless\eta\textless 1 ~and~0\textless \varrho\textless T,
\end{eqnarray}	
\begin{eqnarray}\label{Eqn:1.4}
\mathbb{U}(1,\eta,\varrho)=p_2(\eta,\varrho), ~~0\textless\eta\textless 1~and~0\textless \varrho\textless T.
\end{eqnarray}\label{Eqn:1.5}
\begin{eqnarray}
\mathbb{U}(\varkappa,0,\varrho)=q_1(\varkappa,\varrho),~~~0\textless\varkappa\textless 1 ~and~0\textless \varrho\textless T,
\end{eqnarray}	
\begin{eqnarray}\label{Eqn:1.6}
\mathbb{U}(\varkappa,1,\varrho)=q_2(\eta,\varrho), ~~0\textless\varkappa\textless 1~and~0\textless \varrho\textless T.
\end{eqnarray}
Here, $\rho(\alpha),~\rho(\beta)$ are the weight functions that satisfy the following criteria\cite{gorenflo2013fundamental}
 \begin{equation*}
 \rho(\alpha)\geq 0, ~~\int_{\alpha_1}^{\alpha_2}\rho(\alpha)d\alpha=\lambda_1\textgreater 0~and~\rho(\beta)\geq 0, ~~\int_{\beta_1}^{\beta_2}\rho(\beta)d\beta=\lambda_2\textgreater 0.
 \end{equation*}
 The existence and uniqueness of the solution of DOFDE can be seen in \cite{li2017analyticity,morgado2017numerical}. The application of considered problem \ref{Eqn:01.1}-\ref{Eqn:01.4} \& \ref{Eqn:1.1}-\ref{Eqn:1.6} can be found in the  modeling of physical phenomena involving viscoelastic model. Based on the current literature and referring to the data, there is no computational approach available for solving DOT-SFIPDE centred on LW. The proposed technique based on LW operational matrices is employed in this article, to solve this newly created model \ref{Eqn:01.1}-\ref{Eqn:01.4} \& \ref{Eqn:1.1}-\ref{Eqn:1.6} in the sense of fractional Caputo derivative. 
 
 The operational matrices have been proved to be an effective tool for solving FDEs. Saadatmandi and Dehghan developed the operational matrix of fractional derivative (OMFD) for shifted Legendre polynomials (SLPs) in 2010 \cite{saadatmandi2010new}. In  \cite{li2010haar}, Zhao et al. constructed the OMFD using Haar wavelet. In \cite{keshavarz2014bernoulli} authors developed the  Bernoulli wavelets OMFD. In \cite{bhrawy2015review}, Taha et al. invented the Laguerre polynomials OMFD. Pourbabaee and Saadatmandi \cite{pourbabaee2019novel} recently devised a helpful technique  based on Legendre polynomials OMFD for finding the approximated solution of DOFDE. Readers can see \cite{singh2017numerical,singh2018application,singh2018convergence}, to learn more about operational matrix approaches .
 
  Wavelets are a special type of orthogonal functions that have become very useful and effective tools in computational science. Wavelet methods have recently received increased recognition for numerically solving integral and differential equations; they were first utilized to discuss the solution of differential equations in the early 1990 \cite{chen1997haar}. Many papers have been recently published that use LWs to include numerical solutions of  fractional differential and integro-differential equations (IDEs). The LW operational matrix approach is used to solve the nonlinear Volterra IDEs \cite{sahu2015legendre}. To solve the Dirichlet boundary value problem for fractional partial differential equation, LWs were used \cite{heydari2014legendre}. In \cite{meng2015legendre} Meng et.
  al. used LW to evaluate the solution of linear and nonlinear fractional IDEs.
  
It is observed that  majority of papers that use the LWs approach to obtain numerical solution of FDEs use a LWs operational matrix. As a result, we use the LWs operational matrix approach to solve linear time-space DOF integro-differential equations with weakly singular kernels in this article.
The operational matrix approach is also proven to be an effective and resilient numerical methodology for solving DOFDE, as shown in  \cite{pourbabaee2019novel}. The goal of this article is to construct the DOF derivative matrix based on LWs. The motivation for using a wavelets-based approach is convenient. There are two approaches to improve the accuracy of the solution in such methods: raising the  level of resolution of wavelets family and increasing the number of wavelet basis functions. Furthermore, because LWs are made up of orthogonal polynomials, they have indefinitely differentiable functions \& small compact support. Moreover, the LW  operational matrices are sparse, reducing calculation time. The Legendre-Gauss quadrature (LGQ) rule and the tau technique are used to solve the DOT-SFWSIPDE using such matrices. Know more information about wavelet and DOFDE readers can be see \cite{mehra2018wavelets,kumar2021computational,behera2018adaptive,gao2017temporal,alikhanov2015numerical}. The goal of this technique is to have successful experimental tests that are less computationally expensive. 

The remainder of the paper is structured as follows: Fractional derivatives, the distributed differential operator, LWs, and their approximation characteristics are all briefly defined in Section \ref{sec:2}. This section also includes the Gauss-Legendre quadrature integration formula. The operational matrices are covered in the reference section \ref{sec:3}.  Operational matrices of derivative are produced in this section for both integer and distributed order LWs. In section \ref{sec:4}, the suggested technique is implemented together with a numerical algorithm to solve DOFIDEs. Section \ref{sec:5}  discusses the error bound and convergence analysis for the described scheme. The suggested method's error estimation is described in section \ref{sec:6}. Finally, in section \ref{sec:7}, numerical tests of a viscoelastic model regulated by DOT-SFWSIPDE are carried out.
\\

\section{Some basic definitions}\label{sec:2}
\begin{Definition}(\textbf{Fractional Caputo derivative}):
	 The fractional derivative of order $\alpha>0$ in Caputo sense  is devoted as  \cite{podlubny1998fractional}
	% $ D_\varrho^{\gamma}=$
	\begin{equation}\label{Eqn:2.1}
	D_\varrho^{\alpha}\mathbb{U}(\varkappa,\varrho)=
	\left\{ \begin{array}{l}\frac{1}{\Gamma(n-\alpha)}\int_0^{\varrho}\frac{1}{(\varrho-\tau)^{\alpha+1-n}}\frac{\partial^n \mathbb{U}(\varkappa,\tau)}{\partial \tau^n}d\tau,~~~  n-1<\alpha<n
	\vspace{0.2cm}\\ \frac{\partial^n \mathbb{U}(\varkappa,\varrho)}{\partial \varrho^n},~~~~~~~~~~~~~~~~~~~~~~~~~~~~~~ \alpha=n\in\mathbb{N}.
	\end{array}\right.
	\end{equation}
	
	Here, $\Gamma(.)$ represents the Gamma function. This Caputo operator have some basic properties:
\end{Definition}

\begin{itemize}
	\item $D_\varrho^{\alpha} \mathcal{J}=0$, here, $\mathcal{J}$ denotes arbitrary constant.
\item The Caputo derivative of $u(\varrho)=\varrho^n, n\in\mathbb{Z}^+$ is given as:
\begin{equation*}
 D_\varrho^{\alpha}\varrho^n=
\left\{ \begin{array}{l} 0,~~~~~~~~~~~~~~~~~~~~~~ n<\lceil\alpha\rceil,
\vspace{0.2cm}\\ \frac{\Gamma(n+1)}{\Gamma(n+1-\alpha)}\varrho^{n-\alpha},~~ n\geq\lceil\alpha\rceil,
\end{array}\right.
\end{equation*}
\end{itemize}

 where,  $\lceil\cdot\rceil$ represent  the ceiling function.
\begin{itemize}
	\item $\ D_\varrho^{\alpha}$ fulfills the linearity property, i.e.
\begin{equation*}
D_\varrho^{\alpha}\left(\sum_{j=1}^sb_ju_j(\varrho)\right)=\sum_{j=1}^sb_jD_\varrho^{\alpha}u_j(\varrho).
\end{equation*}
Here, $b_j$ denotes an arbitrary constants for $j=1,2,\cdots,s, ~s\in\mathcal{Z}^+$
\end{itemize}
\begin{Definition}(\textbf{Distributed order fractional derivative}):
	The $D_\varrho^{\rho(\alpha)}$ DOF derivative  is defined as\cite{jiao2012distributed}
	\begin{eqnarray}\label{2MEqn2}
	D_\varrho^{\rho(\alpha)}u(\varrho)=\int_{\alpha_1}^{\alpha_2}{\rho(\alpha)}D_\varrho^{\alpha}u(\varrho)d\alpha.
	\end{eqnarray}
	Here, $\rho(\alpha)$ defines the  distribution weight function of order $\alpha$ and $\alpha\in[\alpha_1,\alpha_2]$, where $\alpha_1$ and $\alpha_2$ are non-negative real numbers.
	The DOFD operator has the following properties:
\end{Definition}
\begin{itemize}
\item$D_\varrho^{\rho(\alpha)}\mathcal{J}=0$, where, $\mathcal{J}$ is any arbitrary constant.
\item$D_\varrho^{\rho(\alpha)}$ is a linear operator, i.e,
\begin{eqnarray}
D_\varrho^{\rho(\alpha)}\left(\sum_{j=1}^sb_jD_\varrho^{\rho(\alpha)}u_j(\varrho)\right)=\sum_{j=1}^sb_jD_{\varrho}^{\rho(\alpha)}u_j(\varrho),
\end{eqnarray}
where, ${b_j}$ are arbitrary constants for $j=1,2,\cdots,s, `s\in\mathcal{Z}^+$ 
\end{itemize}
\begin{itemize}
\item  If $\rho(\alpha)=\delta(\alpha-\mu)$, where, $\alpha_1<\mu<\alpha_2$ and  $\delta$ is delta Dirac function. Then we have 
\begin{eqnarray}
D_\varrho^{\rho(\alpha)}u(\varrho)=\int_{\alpha_1}^{\alpha_2}\textbf{$\delta$} (\alpha-\mu)D_\varrho^{\alpha}u(\varrho)d\alpha=D_\varrho^{\mu}u(\varrho).
\end{eqnarray}
In other words, we obtain a fractional derivative of order $\mu.$
\end{itemize}
\begin{Definition}(\textbf{Legendre wavelets}):
	Legendre wavelets $\Psi_{\mathfrak{h},\mathfrak{g}}(\varrho)=\Psi(\mathfrak{R},\hat{\mathfrak{h}},\mathfrak{g},\varrho)$ have four arguments: $\hat{\mathfrak{h}}=2\mathfrak{h}-1, \mathfrak{h}=1,2,3,\cdots,2^{\mathfrak{R}-1}$, $\mathfrak{R}\in Z^+$, $\mathfrak{g}$ is degree of Legendre polynomials and $\varrho$ denotes the time of normalization. Then the one dimension LWs definition  over [0,1] are described as \cite{singh2018application}:
	
	\begin{eqnarray}\label{2MEqn5}
	\Psi_{\mathfrak{h},\mathfrak{g}}(\varrho)= \left\{ \begin{array}{l}\sqrt{\displaystyle(\mathfrak{g}+\frac{1}{2})}2^{\frac{\mathfrak{R}}{2}}p_{\mathfrak{g}}(2^\mathfrak{R}\varrho-2\mathfrak{h}+1),~~  \frac{\mathfrak{h}-1}{2^{\mathfrak{R}-1}}\leq \varrho \leq\frac{\mathfrak{h}}{2^{\mathfrak{R}-1}},
	\vspace{0.2cm}\\0,~~~~~elsewhere.
	\end{array}\right.
	\end{eqnarray} 
	
	Where, $\mathfrak{g}=0,1,2,\cdots,\Lambda-1, \mathfrak{h}=1,2,3,\cdots,2^{\mathfrak{R}-1}$. 
\end{Definition}
 
  \begin{Remark}\label{sec:2.4}
  Two-dimensional LWs are represented as follows:\cite{singh2018application}:
  \begin{eqnarray}
  \Psi_{\mathfrak{h},\mathfrak{g},\mathfrak{h}',\mathfrak{g}'}(\varkappa,\varrho)=\left\{ \begin{array}{l}\Psi_{\mathfrak{h},\mathfrak{g}}(\varkappa)\Psi_{\mathfrak{h}',\mathfrak{g}'}(\varrho),~~  \frac{\mathfrak{h}-1}{2^{\mathfrak{R}-1}}\leq \varkappa\leq\frac{\mathfrak{h}}{2^{\mathfrak{R}-1}},~\frac{\mathfrak{h}'-1}{2^{\mathfrak{R}'-1}}\leq \varrho\leq\frac{\mathfrak{h}'}{2^{\mathfrak{R}'-1}},
  \vspace{0.2cm}\\0, ~~elsewhere.
  \end{array}\right.
  \end{eqnarray} 
  \end{Remark}

  \begin{Definition}(\textbf{Function approximation}):
%  	 A square integrable function $u(\varrho)$ defined over specified domain $\Omega=[0,1]$ can be expanded in the LW series form as \cite{mohammadi2011new}
%  	\begin{eqnarray}\label{2MEqn10}
%  	u(\varrho)=\sum_{\mathfrak{h}=1}^{\infty}\sum_{\mathfrak{g}=0}^{\infty}c_{\mathfrak{h}\mathfrak{g}}\varphi_{\mathfrak{h}\mathfrak{g}}(\varrho).
%  	\end{eqnarray}
%  	If the above  series is truncated, then Eq. \ref{2MEqn10} can be expressed as
%  	\begin{eqnarray}
%  	u(\varrho)\approx \sum_{\mathfrak{h}=1}^{2^{\mathfrak{R}-1}}\sum_{\mathfrak{g}=0}^{\Lambda-1}c_{\mathfrak{h}\mathfrak{g}}\varphi_{\mathfrak{h}\mathfrak{g}}(\varrho)=C^T\Psi(\varrho),
%  	\end{eqnarray}
%  	where, $c_{\mathfrak{h}\mathfrak{g}}=\left<u(\varrho),\varphi_{\mathfrak{h}\mathfrak{g}}(\varrho)\right>$ and $\left<.,.\right>$ denotes the inner product.
%  	
%  	C and $\Psi(\varrho)$ are $2^{\mathfrak{R}-1}\Lambda\times 1$ matrices given by
  	  	
  	The function $f(\varkappa,\varrho)$ defined over $L^2(\Omega=[0,1]\times[0,T])$ can be written as the sum of LW infinite series such as
  	\begin{eqnarray}
  	f(\varkappa,\varrho)=\sum_{\mathfrak{h}=1}^{\infty}\sum_{\mathfrak{g}=0}^{\infty}\sum_{\mathfrak{h}'=1}^{\infty}\sum_{\mathfrak{g}'=0}^{\infty} f_{\mathfrak{h}\mathfrak{g} \mathfrak{h}'\mathfrak{g}'}\varphi_{\mathfrak{h}\mathfrak{g} \mathfrak{h}'\mathfrak{g}'}(\varkappa,\varrho).
  	\end{eqnarray}
  	The truncation of the above series leads to 
  	\begin{eqnarray}
  	f(\varkappa,\varrho)\approx\sum_{\mathfrak{h}=1}^{2^{\mathfrak{R}-1}}\sum_{\mathfrak{g}=0}^{{\Lambda-1}}\sum_{\mathfrak{h}'=1}^{2^{\mathfrak{R}'-1}}\sum_{\mathfrak{g}'=0}^{\Lambda'-1} f_{\mathfrak{h}\mathfrak{g} \mathfrak{h}'\mathfrak{g}'}\varphi_{\mathfrak{h}\mathfrak{g} \mathfrak{h}'\mathfrak{g}'}(\varkappa,\varrho)=\Psi^T(\varrho)\mathcal{F}\Psi(\varkappa),
  	\end{eqnarray}
  	where, $f_{\mathfrak{h}\mathfrak{g} \mathfrak{h}'\mathfrak{g}'}=\left<\left<f,\Psi(\varrho)\right>,\Psi(\varkappa)\right>$ and $<.,.>$ represents the inner product and $\mathcal{F}$  is $2^{\mathfrak{R}-1}\Lambda\times2^{\mathfrak{R}'-1}\Lambda'$ vector and $\Psi(\varrho)$,$\Psi(\varkappa)$ are $2^{\mathfrak{R}-1}\Lambda\times1,2^{\mathfrak{R}'-1}\Lambda'\times1$ vectors, respectively.
%  	\begin{eqnarray}
%  	C=\begin{bmatrix}
%  	c_{1,0},c_{1,1},\cdots, c_{1,\Lambda-1},c_{2,0},\cdots, c_{2,\Lambda-1},\cdots, c_{2^{\mathfrak{R}-1},0},\cdots, c_{2^{\mathfrak{R}-1},\Lambda-1}
%  	\end{bmatrix}^T,
%  	\end{eqnarray}
  	\begin{eqnarray}
  	\Psi(\varrho)=\begin{bmatrix}
  	\varphi_{1,0},\varphi_{1,1},\cdots, \varphi_{1,\Lambda-1},\varphi_{2,0},\cdots, \varphi_{2,\Lambda-1},\cdots, \varphi_{2^{\mathfrak{R}-1},0},\cdots, \varphi_{2^{\mathfrak{R}-1},\Lambda-1}
  	\end{bmatrix}^T.
  	\end{eqnarray}
 
  	Similarly, 
  	\begin{equation}
  	f(\varkappa,\eta,\varrho)=\sum_{\mathfrak{h}=1}^{\infty}\sum_{\mathfrak{g}=0}^{\infty}\sum_{\mathfrak{h}'=1}^{\infty}\sum_{\mathfrak{g}'=0}^{\infty}\sum_{\mathfrak{h}''=0}^\infty\sum_{\mathfrak{g}''=0}^\infty f_{\mathfrak{h}\mathfrak{g} \mathfrak{h}'\mathfrak{g}' \mathfrak{h}''\mathfrak{g}''}\varphi_{\mathfrak{h}\mathfrak{g} \mathfrak{h}'\mathfrak{g}' \mathfrak{h}''\mathfrak{g}''}(\varkappa,\eta,\varrho).
  	\end{equation}
  	
  	The truncation of the above series leads to 
  	\begin{eqnarray}
  	f(\varkappa,\eta,\varrho)\approx\sum_{\mathfrak{h}=1}^{2^{\mathfrak{R}-1}}\sum_{\mathfrak{g}=0}^{{\Lambda-1}}\sum_{\mathfrak{h}'=1}^{2^{\mathfrak{R}'-1}}\sum_{\mathfrak{g}'=0}^{\Lambda'-1}\sum_{\mathfrak{h}''=1}^{2^{\mathfrak{R}''-1}}\sum_{\mathfrak{g}''=0}^{{\Lambda''-1}} f_{\mathfrak{h}\mathfrak{g} \mathfrak{h}'\mathfrak{g}'\mathfrak{h}''\mathfrak{g}''}\varphi_{\mathfrak{h}\mathfrak{g} \mathfrak{h}'\mathfrak{g}'\mathfrak{h}''\mathfrak{g}''}(\varkappa,\eta,\varrho)=\Psi^T(\varrho)\mathcal{F}\Psi(\varkappa,\eta),
  	\end{eqnarray}
  	where, the function $f(\varkappa,\eta,\varrho)$ defined over $L^2(\Omega=[0,1]\times[0,1]\times[0,T])$,  $\Psi(\varkappa,\eta)=\Psi(\varkappa)\otimes \Psi(\eta)$, $\otimes$ denotes the Kronecker product and $f_{\mathfrak{h}\mathfrak{g} \mathfrak{h}'\mathfrak{g}'\mathfrak{h}''\mathfrak{g}''}$ =$\left<\left<\left<f,\Psi(\varrho)\right>, \Psi(\varkappa)\right>,\Psi(\eta)\right>$. $\mathcal{F}$ is $2^{\mathfrak{R}-1}\Lambda\times (2^{\mathfrak{R}'-1}\Lambda')(2^{\mathfrak{R}''-1}\Lambda'')$ vector and $\Psi(\varrho),\Psi(\varkappa)$ and $\Psi(\eta)$ are $2^{\mathfrak{R}-1}\Lambda\times 1,2^{\mathfrak{R}'-1}\Lambda'\times1, 2^{\mathfrak{R}''-1}\Lambda''\times 1$ vectors, respectively.
  	
  \end{Definition}

    \begin{Definition}(\textbf{Legendre-Gauss quadrature (LGQ) formula for Numerical integration}):
    	 Let ${\{\tau_s}\}_{s=1}^P$ denotes the collection of $P$ distinct roots of Legendre polynomial of degree $P$, where, $P\in \mathbb{Z} ^+$. The $P$-point LGQ formula approximates the function  integral over the interval $(\alpha_1,\alpha_2)$ as\cite{hildebrand1987introduction}
    	\begin{eqnarray}\label{2MEqn16}
    	\int_{\alpha_1}^{\alpha_2}u(\varrho)d\varrho\approx\sum_{q=1}^P w_qu(\sigma_q),
    	\end{eqnarray}
    	where,
    	\begin{eqnarray}
    	w_s=\frac{\alpha_2-\alpha_1}{(1-\tau_s^2)(L'_P(\tau_s))^2},~  \sigma_s=\frac{\alpha_2-\alpha_1}{2}\tau_s+\frac{\alpha_2+\alpha_1}{2}, ~ s=1,2,\cdots,P.
    	\end{eqnarray}
    	Here, ${\{w_s}\}_{s=1}^P$ and ${\{\sigma_s}\}_{s=1}^P$ are LGQ weights and nodes, respectively. The LGQ formula is correct upto for all polynomials, of degree atmost $2P-1$.
    \end{Definition}
 
  \section{ Construction of operational matrices }\label{sec:3}
  \subsection{Derivative operational matrix for integer order }\label{sec:3.1}
  \begin{Theorem}
  	The derivative of m-degree shifted Legendre polynomial $p_m(\varkappa)$ defined over [0, 1] is given as:
  	\begin{equation*}
  	p_m'(\varkappa)=2\sum_{k=0, k+m~  odd}^{m-1}(2k+1)p_k(\varkappa)
  	\end{equation*}
  	\begin{proof}
  		Given in reference \cite{mohammadi2011new}. 
  	\end{proof}
  \end{Theorem}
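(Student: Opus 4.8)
The plan is to exploit orthogonality of the shifted Legendre polynomials on $[0,1]$ together with a single integration by parts. Since $p_m'(\varkappa)$ is a polynomial of degree $m-1$, it admits a finite expansion $p_m'(\varkappa)=\sum_{k=0}^{m-1}c_k\,p_k(\varkappa)$. Using the normalization $\int_0^1 p_k(\varkappa)^2\,d\varkappa=\frac{1}{2k+1}$ and the orthogonality $\int_0^1 p_k p_j\,d\varkappa=0$ for $k\neq j$, the coefficients are recovered by
\begin{equation*}
c_k=(2k+1)\int_0^1 p_m'(\varkappa)\,p_k(\varkappa)\,d\varkappa,\qquad k=0,1,\dots,m-1.
\end{equation*}

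Next I would integrate by parts: $\int_0^1 p_m'(\varkappa)p_k(\varkappa)\,d\varkappa=\big[p_m(\varkappa)p_k(\varkappa)\big]_0^1-\int_0^1 p_m(\varkappa)p_k'(\varkappa)\,d\varkappa$. The integral on the right vanishes, because $p_k'$ has degree $k-1<m$ and hence is orthogonal to $p_m$. For the boundary term I would use the standard endpoint values of the shifted Legendre polynomials, namely $p_m(1)=1$ and $p_m(0)=(-1)^m$ (these follow from $p_m(\varkappa)=P_m(2\varkappa-1)$ and $P_m(1)=1$, $P_m(-1)=(-1)^m$). This gives
\begin{equation*}
c_k=(2k+1)\big(p_m(1)p_k(1)-p_m(0)p_k(0)\big)=(2k+1)\big(1-(-1)^{m+k}\big),
\end{equation*}
which equals $0$ when $m+k$ is even and $2(2k+1)$ when $m+k$ is odd. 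Substituting back yields exactly $p_m'(\varkappa)=2\sum_{k=0,\,k+m\ \mathrm{odd}}^{m-1}(2k+1)p_k(\varkappa)$.

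An alternative route, which I would mention but not pursue in detail, is to start from the classical identity $P_n'(x)=\sum_{k=0,\,k+n\ \mathrm{odd}}^{n-1}(2k+1)P_k(x)$ for Legendre polynomials on $[-1,1]$ (obtained by telescoping the recurrence $(2n+1)P_n=P_{n+1}'-P_{n-1}'$) and then applying the chain rule to $p_m(\varkappa)=P_m(2\varkappa-1)$, the factor $2$ coming from $\frac{d}{d\varkappa}(2\varkappa-1)$. Neither argument is deep; the only point requiring care is bookkeeping the parity of $m+k$ and the correct endpoint values $p_m(0)=(-1)^m$, $p_m(1)=1$, so that is the main (minor) obstacle. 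Since the paper already cites \cite{mohammadi2011new} for this fact, a short self-contained version along the lines above suffices.
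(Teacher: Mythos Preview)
Your proof is correct. The paper does not actually supply an argument for this theorem; it merely cites \cite{mohammadi2011new}. Your integration-by-parts computation (expanding $p_m'$ in the orthogonal basis, killing the remaining integral by degree considerations, and reading off the coefficients from the endpoint values $p_m(1)=1$, $p_m(0)=(-1)^m$) is the standard self-contained derivation, and the alternative via the recurrence $(2n+1)P_n=P_{n+1}'-P_{n-1}'$ followed by the substitution $\varkappa\mapsto 2\varkappa-1$ is equally valid. Either route is more than the paper provides, so there is nothing to compare against beyond the bare citation.
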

    \begin{Theorem}
  	Suppose $\Psi(\varkappa)$ denotes the LW vector. The derivative of $\Psi(\varkappa)$ can be determined as: 
  	\begin{eqnarray}
  	\frac{d\Psi(\varkappa)}{d\varrho}=D\Psi(\varkappa),
  	\end{eqnarray}  
  	here, $D$ denotes derivative operational matrix for LW   of order  $2^{\mathfrak{R}-1}(\mathfrak{g}+1)$, described as follows:
  	\begin{eqnarray*}
  	D=
  	\begin{bmatrix}
  	H & 0 & 0 & \cdots & 0\\
  	0 & H & 0 & \cdots & 0\\
  	\vdots & \vdots & \vdots & \ddots & \vdots\\
  	0 & 0 & \cdots & 0 & H
  	\end{bmatrix}.
  	\end{eqnarray*}
  	$H$ represents matrix of order $(\mathfrak{g}+1)\times(\mathfrak{g}+1)$ and its $(\mathfrak{p},\mathfrak{q})$th component is described as follows:
  \begin{eqnarray}
  H_{\mathfrak{p},\mathfrak{q}}= \left\{ \begin{array}{l} 2^{\mathfrak{R}}\sqrt{\displaystyle(2\mathfrak{p}-1)(2\mathfrak{q}-1)}~~\mathfrak{p}=2,3,\cdots,(\mathfrak{g}+1),~ \mathfrak{q}=1,2, \cdots, \mathfrak{p}-1 ~ and~(\mathfrak{p}+\mathfrak{q})~odd, 
  \vspace{0.2cm}\\0,~~otherwise.
  \end{array}\right.
  \end{eqnarray} 
  \end{Theorem}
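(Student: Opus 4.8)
The plan is to build the Legendre wavelet derivative matrix directly from the structure of $\Psi(\varkappa)$ together with Theorem 3.1. First I would recall that, on each subinterval $I_{\mathfrak{h}}=[(\mathfrak{h}-1)2^{-(\mathfrak{R}-1)},\mathfrak{h}2^{-(\mathfrak{R}-1)}]$, the wavelet component $\Psi_{\mathfrak{h},\mathfrak{g}}(\varkappa)$ is, up to the normalization factor $\sqrt{\mathfrak{g}+1/2}\,2^{\mathfrak{R}/2}$, the shifted Legendre polynomial $p_{\mathfrak{g}}(2^{\mathfrak{R}}\varkappa-2\mathfrak{h}+1)$, and it vanishes off $I_{\mathfrak{h}}$. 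Because differentiation is local, the derivative of a wavelet supported on $I_{\mathfrak{h}}$ is again supported on $I_{\mathfrak{h}}$; hence $D$ cannot mix blocks associated with different $\mathfrak{h}$, which is exactly why $D$ is block-diagonal with identical blocks $H$ of size $(\mathfrak{g}+1)\times(\mathfrak{g}+1)$ (here I read $\mathfrak{g}+1=\Lambda$ as the number of polynomial degrees per block).

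Next I would compute a single block $H$. Fix $\mathfrak{h}$ and write $t=2^{\mathfrak{R}}\varkappa-2\mathfrak{h}+1$, so $dt/d\varkappa=2^{\mathfrak{R}}$. Differentiating $\Psi_{\mathfrak{h},\mathfrak{p}-1}(\varkappa)=\sqrt{(\mathfrak{p}-1)+\tfrac12}\,2^{\mathfrak{R}/2}\,p_{\mathfrak{p}-1}(t)$ with the chain rule gives a factor $2^{\mathfrak{R}}$ times $p_{\mathfrak{p}-1}'(t)$, and applying Theorem 3.1 expands $p_{\mathfrak{p}-1}'(t)$ as $2\sum_{k}(2k+1)p_k(t)$ over those $k<\mathfrak{p}-1$ with $k+(\mathfrak{p}-1)$ odd. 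Then I would re-express each $p_k(t)$ in terms of the wavelet basis functions $\Psi_{\mathfrak{h},k}(\varkappa)=\sqrt{k+\tfrac12}\,2^{\mathfrak{R}/2}p_k(t)$, which contributes a factor $1/\big(\sqrt{k+\tfrac12}\,2^{\mathfrak{R}/2}\big)$. Collecting the constants, the coefficient of $\Psi_{\mathfrak{h},\mathfrak{q}-1}$ in the expansion of $\Psi_{\mathfrak{h},\mathfrak{p}-1}'$ becomes $2^{\mathfrak{R}}\cdot 2(2(\mathfrak{q}-1)+1)\cdot\sqrt{(\mathfrak{p}-1)+\tfrac12}\big/\sqrt{(\mathfrak{q}-1)+\tfrac12}$; simplifying $2(2(\mathfrak{q}-1)+1)=2(2\mathfrak{q}-1)$ and $\sqrt{(\mathfrak{p}-1)+1/2}\big/\sqrt{(\mathfrak{q}-1)+1/2}=\sqrt{(2\mathfrak{p}-1)/(2\mathfrak{q}-1)}$, this collapses to $2^{\mathfrak{R}}\sqrt{(2\mathfrak{p}-1)(2\mathfrak{q}-1)}$, matching the claimed $H_{\mathfrak{p},\mathfrak{q}}$; the parity condition $k+(\mathfrak{p}-1)$ odd translates to $(\mathfrak{q}-1)+(\mathfrak{p}-1)$ odd, i.e. $\mathfrak{p}+\mathfrak{q}$ odd, and the constraint $k<\mathfrak{p}-1$ becomes $\mathfrak{q}\le\mathfrak{p}-1$, giving the lower-triangular (strictly below diagonal) nonzero pattern; for $\mathfrak{p}=1$ the derivative of a constant is zero.

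Finally I would assemble these facts: choosing the natural ordering of $\Psi(\varkappa)$ (first all $\mathfrak{g}$ for $\mathfrak{h}=1$, then for $\mathfrak{h}=2$, and so on), the locality argument places the $H$ blocks on the diagonal, and the per-block computation fills in $H$, yielding $d\Psi(\varkappa)/d\varkappa=D\,\Psi(\varkappa)$. I expect the only real subtlety to be bookkeeping: being careful that the chain-rule factor $2^{\mathfrak{R}}$ and the two normalization factors (one from differentiating, one from re-expanding into $\Psi_{\mathfrak{h},\mathfrak{q}-1}$) combine correctly, and that the index shift between ``degree $\mathfrak{g}$'' and ``row/column index $\mathfrak{p}=\mathfrak{g}+1$'' is handled consistently so the parity and range conditions come out as stated. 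There is no analytic difficulty beyond Theorem 3.1; the argument is essentially a change-of-variables plus a re-indexing of the known shifted-Legendre derivative identity.
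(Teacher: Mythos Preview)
Your approach is correct and is in fact more than the paper provides: the paper does not prove this theorem at all but simply cites \cite{mohammadi2011new}, and your argument is exactly the standard computation carried out there (block-diagonality from disjoint supports, then the chain rule plus the Legendre derivative identity inside each block).

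One small bookkeeping point to watch, since you flagged it yourself: in the wavelet definition the polynomials $p_{\mathfrak{g}}$ are evaluated at $t=2^{\mathfrak{R}}\varkappa-2\mathfrak{h}+1\in[-1,1]$, so they are the \emph{standard} Legendre polynomials, whose derivative identity is $P_m'(t)=\sum_{k}(2k+1)P_k(t)$ without the leading factor $2$. Theorem~3.1 as stated (with the factor $2$) is for the shifted Legendre polynomials on $[0,1]$. Your displayed coefficient $2^{\mathfrak{R}}\cdot 2(2\mathfrak{q}-1)\cdot\sqrt{(2\mathfrak{p}-1)/(2\mathfrak{q}-1)}$ actually simplifies to $2^{\mathfrak{R}+1}\sqrt{(2\mathfrak{p}-1)(2\mathfrak{q}-1)}$, not $2^{\mathfrak{R}}\sqrt{(2\mathfrak{p}-1)(2\mathfrak{q}-1)}$; the extra $2$ you silently dropped is precisely the spurious factor coming from quoting the shifted-Legendre formula. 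Using the correct standard-Legendre identity (or, equivalently, recognizing that the chain-rule factor $2^{\mathfrak{R}}$ already absorbs the shift) gives the stated $H_{\mathfrak{p},\mathfrak{q}}$ cleanly. This is a trivial fix and does not affect the structure of your proof.
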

  In general,
  \begin{eqnarray}
  \frac{d^{n_1}\Psi(\varkappa)}{d\varrho^{n_1}}=(D)^{n_1}\Psi(\varkappa),~~n_1=1,2,3, \cdots
  \end{eqnarray}
  \begin{proof}
  Given in reference\cite{mohammadi2011new}
  \end{proof}
%    \begin{Theorem}
%  		Let $\Psi^C(\varrho)$ be the vector for Chebyshev wavelets provided in segment $\ref{sec:2.5}$. Then we got\cite{hosseini2011new}
%  			\begin{eqnarray}
%  		\frac{d\Psi^C(\varrho)}{d\varrho}=D^C\Psi^C(\varrho).
%  		\end{eqnarray}  
%  		Where, $D^C$ is the operational matrix of order $2^K(r+1)$ defined as follows:
%  		\begin{eqnarray}
%  		D^C=
%  		\begin{bmatrix}
%  		F^C & 0 & 0 & \cdots & 0\\
%  		0 & F^C & 0 & \cdots & 0\\
%  		\vdots & \vdots & \vdots & \ddots & \vdots\\
%  		0 & 0 & \cdots & 0 & F^C
%  		\end{bmatrix},
%  		\end{eqnarray}
%  		in which $F$  is matrix of order $(r+1)\times(r+1)$ and its $(m,n)$th components is defined as follows:
%  		\begin{eqnarray}
%  		F_{m,n}^C= \left\{ \begin{array}{l} 2^{K+2}r\sqrt{\displaystyle \frac{c_{m-1}}{c_{n-1}}},~~m=2,3,\cdots, (r+1),~ n=1,2, \cdots, m-1 ~ and~(m+n)~odd, 
%  		\vspace{0.2cm}\\0,~~otherwise.
%  		\end{array}\right.
%  		\end{eqnarray} 
%  		\end{Theorem}
%  In general form, we can talk about it as follows :
%    	\begin{eqnarray}
%  \frac{d^{n_2}\Psi^C(\varrho)}{d\varrho^{n_2}}=(D^C)^{n_2}\Psi^C(\varrho) , ~~~~n_2=1,2,3, \cdots
%  	\end{eqnarray}
  	\subsection{Construction of DOF matrix}\label{sec:3.3}
  Let $D_\varrho^{\rho(\alpha)}$ be the DOF derivative w.r.t time component defined in equation \ref{2MEqn2}. Then we have
  	\begin{eqnarray}
  	D_\varrho^{\rho(\alpha)}\varrho^n=\int_{\alpha_1}^{\alpha_2}\rho(\alpha) D_\varrho^{\alpha}\varrho^n d\alpha
  	=\int_{\alpha_1}^{\alpha_2}\rho(\alpha)\frac{\Gamma(n+1)}{\Gamma(n+1-\alpha)}\varrho^{n-\alpha}d\alpha , ~~ n\in\mathbb{N} ,k\geq\lceil\alpha_2\rceil.
  	\end{eqnarray}
   The LGQ rule approximates the above equation as follows:
   \begin{equation}
   	D_\varrho^{\rho(\alpha)}\varrho^n\approx\sum_{s=1}^Pw_s\frac{\rho(\sigma_s)n!}{\Gamma(n+1-\sigma_s)}.
   	\end{equation}
   	For the general class
   	\begin{align}
   	D_\varrho^{\rho(\alpha)}\Psi(\varrho)
   	=\int_{\alpha_1}^{\alpha_2}\rho(\alpha)D_\varrho^{(\alpha)}\Psi(\varrho)d\alpha
        &=
   		\int_{\alpha_1}^{\alpha_2}\rho(\alpha)
   		\begin{bmatrix}
   		D_\varrho^{(\alpha)}\varphi_{10}(\varrho)\\
   		D_\varrho^{(\alpha)}\varphi_{11}(\varrho)\\
   		\vdots\\
   		D_\varrho^{(\alpha)}\varphi_{1\mathfrak{g}}(\varrho)\\
   		D_\varrho^{(\alpha)}\varphi_{20}(\varrho)\\
   		\vdots\\
   		D_\varrho^{(\alpha)}\varphi_{2\mathfrak{g}}(\varrho)\\
   		\vdots\\
   		D_\varrho^{(\alpha)}\varphi_{2^{\mathfrak{R}-1}0}(\varrho)\\
   		\vdots\\
   		D_\varrho^{(\alpha)}\varphi_{2^{\mathfrak{R}-1}\mathfrak{g}}(\varrho)
   		\end{bmatrix}d\gamma\nonumber
   		\end{align}
   		\begin{align}
   		=
   		\int_{\alpha_1}^{\alpha_2}\rho(\alpha)\begin{bmatrix}
   		b_{10}(\varrho,\alpha)\\
   		b_{11}(\varrho,\alpha)\\
   		\vdots\\
   		b_{1\mathfrak{g}}(\varrho,\alpha)\\
   		b_{20}(\varrho,\alpha)\\
   		\vdots\\
   		b_{2\mathfrak{g}}(\varrho,\alpha)\\
   		\vdots\\
   		b_{2^{\mathfrak{R}-1}0}(\varrho,\alpha)\\
   		\vdots\\
   		b_{2^{\mathfrak{R}-1}\mathfrak{g}}(\varrho,\alpha)\\		
   		\end{bmatrix}d\alpha
   		=
   		\begin{bmatrix}
   		\int_{\alpha_1}^{\alpha_2}\mathcal{H}_{10}(\varrho,\alpha)d\alpha\\
   		\int_{\alpha_1}^{\alpha_2}\mathcal{H}_{11}(\varrho,\alpha)d\alpha\\
   		\vdots\\
   		\int_{\alpha_1}^{\alpha_2}\mathcal{H}_{1\mathfrak{g}}(\varrho,\alpha)d\alpha\\
   		\int_{\alpha_1}^{\alpha_2}\mathcal{H}_{20}(\varrho,\alpha)d\alpha\\
   		\vdots\\
   		\int_{\alpha_1}^{\alpha_2}\mathcal{H}_{2\mathfrak{g}}(\varrho,\alpha)d\alpha\\
   		\vdots\\
   		\int_{\alpha_1}^{\alpha_2}\mathcal{H}_{2^{\mathfrak{R}-1}0}(\varrho,\alpha)d\alpha\\
   		\vdots\\
   		\int_{\alpha_1}^{\alpha_2}\mathcal{H}_{2^{\mathfrak{R}-1}\mathfrak{g}}(\varrho,\alpha)d\alpha\nonumber
   		\end{bmatrix}.
   	\end{align}
   	By using the LGQ rule for numerical integration, one can write
   		\begin{align}
   	   		D_\varrho^{\rho(\alpha)}\Psi(\varrho) 
   	   		&\approx
   		\begin{bmatrix}
   		\sum_{s=1}^Pw_s\mathcal{H}_{10}(\varrho,\sigma_q)\\
   		\sum_{s=1}^Pw_s\mathcal{H}_{11}(\varrho,\sigma_q)\\
   		\vdots\\
   		\sum_{s=1}^Pw_s\mathcal{H}_{1\mathfrak{g}}(\varrho,\sigma_q)\\
   		\sum_{s=1}^Pw_s\mathcal{H}_{20}(\varrho,\sigma_q)\\
   		\vdots\\
   		\sum_{s=1}^Pw_s\mathcal{H}_{2\mathfrak{g}}(\varrho,\sigma_q)\\
   		\vdots\\
   		\sum_{s=1}^Pw_s\mathcal{H}_{2^{\mathfrak{R}-1}0}(\varrho,\sigma_q)\\
   		\vdots\\
   		\sum_{s=1}^Pw_s\mathcal{H}_{2^{\mathfrak{R}-1}\mathfrak{g}}(\varrho,\sigma_q)
   		\end{bmatrix}
   		   		=
   		\begin{bmatrix}
   		\mathcal{Q}_{10}(\varrho)\\
   		\mathcal{Q}_{11}(\varrho)\\
   		\vdots\\
   		\mathcal{Q}_{1\mathfrak{g}}(\varrho)\\
   	    \mathcal{Q}_{20}(\varrho)\\
   	     \vdots\\
   	     \mathcal{Q}_{2\mathfrak{g}}(\varrho)\\
   	     \vdots\\
   	     \mathcal{Q}_{2^{\mathfrak{R}-1}0}(\varrho)\\
   	     \vdots\\
   	     \mathcal{Q}_{2^{\mathfrak{R}-1}\mathfrak{g}}(\varrho)
   		\end{bmatrix}\nonumber\\
   		&\approx
   		\hat{D}^{(\alpha_1,\alpha_2,\rho(\alpha))}\Psi(\varrho).
   		\end{align}
   		Thus we obtain
   		\begin{eqnarray}
   		   		D_\varrho^{\rho(\alpha)}\Psi(\varrho)\approx
   		   		\hat{D}^{(\alpha_1,\alpha_2,\rho(\alpha))}\Psi(\varrho),
   		\end{eqnarray}
   		where, the matrix $\hat{D}^{(\alpha_1,\alpha_2,\rho(\alpha))}$ defined as:
   	
   		\begin{equation*}
   		\setcounter{MaxMatrixCols}{20}
   		\begin{bmatrix}
   		d_{1010} & d_{1011}& \cdots& d_{101\mathfrak{g}} &d_{1020} &\cdots & d_{102\mathfrak{g}}&\cdots&d_{102^{\mathfrak{R}-1}0}&\cdots&d_{102^{\mathfrak{R}-1}\mathfrak{g}}\\
   		d_{1110} & d_{1111}& \cdots& d_{111\mathfrak{g}} &d_{1120} &\cdots & d_{112\mathfrak{g}}&\cdots&d_{112^{\mathfrak{R}-1}0}&\cdots&d_{112^{\mathfrak{R}-1}\mathfrak{g}}\\
   		\vdots & \vdots &  & \vdots & \vdots &  &\vdots & &\vdots&  &\vdots\\
   		d_{1\mathfrak{g}10} & d_{1\mathfrak{g}11}& \cdots& d_{1\mathfrak{g}1\mathfrak{g}} &d_{1\mathfrak{g}20} &\cdots & d_{1\mathfrak{g}2\mathfrak{g}}&\cdots&d_{1\mathfrak{g}2^{\mathfrak{R}-1}0}&\cdots&d_{1\mathfrak{g}2^{\mathfrak{R}-1}\mathfrak{g}}\\
   			d_{2010} & d_{2011}& \cdots& d_{201\mathfrak{g}} &d_{2020} &\cdots & d_{202\mathfrak{g}}&\cdots&d_{202^{\mathfrak{R}-1}0}&\cdots&d_{202^{\mathfrak{R}-1}\mathfrak{g}}\\
   		\vdots & \vdots &   & \vdots & \vdots &  &\vdots &  &\vdots&  &\vdots\\
   				d_{2\mathfrak{g}10} & d_{2\mathfrak{g}11}& \cdots& d_{2\mathfrak{g}1\mathfrak{g}} &d_{2\mathfrak{g}20} &\cdots & d_{2\mathfrak{g}2\mathfrak{g}}&\cdots&d_{2\mathfrak{g}2^{\mathfrak{R}-1}0}&\cdots&d_{2\mathfrak{g}2^{\mathfrak{R}-1}\mathfrak{g}}\\
   				\vdots & \vdots &   & \vdots & \vdots &  &\vdots &  &\vdots&  &\vdots\\
   					d_{2^{\mathfrak{R}-1}010} & d_{2^{\mathfrak{R}-1}011}& \cdots& d_{2^{\mathfrak{R}-1}01\mathfrak{g}} &d_{2^{\mathfrak{R}-1}020} &\cdots & d_{2^{\mathfrak{R}-1}02\mathfrak{g}}&\cdots&d_{2^{\mathfrak{R}-1}02^{\mathfrak{R}-1}0}&\cdots&d_{2^{\mathfrak{R}-1}02^{\mathfrak{R}-1}\mathfrak{g}}\\
   					\vdots & \vdots &   & \vdots & \vdots &  &\vdots &  &\vdots&  &\vdots\\
   						d_{2^{\mathfrak{R}-1}\mathfrak{g}10} & d_{2^{\mathfrak{R}-1}\mathfrak{g}11}& \cdots& d_{2^{\mathfrak{R}-1}\mathfrak{g}1\mathfrak{g}} &d_{2^{\mathfrak{R}-1}\mathfrak{g}20} &\cdots & d_{2^{\mathfrak{R}-1}\mathfrak{g}2\mathfrak{g}}&\cdots&d_{2^{\mathfrak{R}-1}\mathfrak{g}2^{\mathfrak{R}-1}0}&\cdots&d_{2^{\mathfrak{R}-1}\mathfrak{g}2^{\mathfrak{R}-1}\mathfrak{g}}
   		\end{bmatrix},
   	   		\end{equation*}
   	   	whose entries can be calculated as follows
   	   	\begin{equation*}
   	   	d_{ijkl}=\left<\mathcal{Q}_{ij}(\varrho),\varphi_{kl}(\varrho)\right>,~~ i,k=1,2,\cdots,2^{\mathfrak{R}-1},~j,l=0,1,2,\cdots\mathfrak{g}~and~<.,.> denotes~ the~ inner ~product.
   	   	\end{equation*}
   	   		
   		The above defined matrix $\hat{D}^{(\alpha_1,\alpha_2,\rho(\alpha))}$ of order ${2^{\mathfrak{R}-1}(\mathfrak{g}+1\times \mathfrak{g}+1)}$  is represents the DOF operational matrix. 
   		 
   		 \begin{Remark}\label{Rem3.1}
   		 Similarly, one can construct the DOF operational matrix namely: $\hat{D}_\varkappa^{(\beta_1,\beta_2,\rho(\beta))}$, $\hat{D}_\eta^{(\beta_1,\beta_2,\rho(\beta))}$, for space direction.
   		 \end{Remark}
%   	 \subsection{Weakly singular integral operational matrix}
%   	 In this section we discuss integration operational matrix. This integral term associated with weakly singular kernal and second order derivative terms in the space direction. 
   		 		
   		\section{Numerical method}\label{sec:4}
   		In this section we discuss numerical procedure to solve one and two dimensional DOT-SFIDEs.
   		
   		\subsection{1-D distributed order time-space fractional weakly singular integero differential equation}\label{sec:4.2}
   	   	We consider the following DOT--SFWSIPDE  of the form:
   		\begin{align}\label{Eqn:4.1}
   		\int_{\alpha_1}^{\alpha_2}\rho(\alpha)\frac{\partial^{\alpha}\mathbb{U}(\varkappa,\varrho)}{\partial \varrho^{\alpha}}d\alpha+\mathbb{U}(\varkappa,\varrho)=\mathcal{K^*}\int_{\beta_1}^{\beta_2}\rho(\beta)\frac{\partial^{\beta}\mathbb{U}(\varkappa,\varrho)}{\partial \varkappa^{\beta}}d\beta&+\int_0^\varrho(\varrho-\xi)^{-\frac{1}{2}}\left[\frac{\partial^2\mathbb{U}(\varkappa,\xi)}{\partial \varkappa^2}\right]d\xi\nonumber\\
   		&+f(\varkappa,\varrho),
   		\end{align}
   		where, $\mathcal{K^*}$ is viscosity constant and $(\varkappa,\varrho)\in \Omega, \alpha_1=0, \alpha_2=1, \beta_1=1, \beta_2=2$ and $\Omega=[0,1]\times[0,T].$
   		
   		The above  equation \ref{Eqn:4.1} is endowed with the initial condition (IC)
   		\begin{eqnarray}\label{Eqn:4.2}
   		\mathbb{U}(\varkappa,0)=\nu(\varkappa) ,~~  0\textless \varkappa \textless 1,
   		\end{eqnarray}
   		and Dirichlet boundary conditions (BCs) 
   		\begin{eqnarray}\label{Eqn:4.3}
   		\mathbb{U}(0,\varrho)=\mathfrak{p_1}(\varrho),~~~0\textless \varrho\textless T,
   		\end{eqnarray}
   		\begin{eqnarray}\label{Eqn:4.4}
   		\mathbb{U}(0,\varrho)=\mathfrak{p_2}(\varrho), ~~0\textless \varrho\textless T.
   		\end{eqnarray}
%   		$\rho(\alpha)$ is a weight function that fulfills the following criteria\cite{gorenflo2013fundamental}
%   		\begin{equation*}
%   		\rho(\alpha)\geq 0, ~~\int_{\alpha_1}^{\alpha_2}\rho(\alpha)d\alpha=\lambda_1\textgreater 0~and~\rho(\beta)\geq 0, ~~\int_{\beta_1}^{\beta_2}\rho(\beta)d\beta=\lambda_2\textgreater 0.
%   		\end{equation*}
   		Consider the approximation of the known and unknown function as
   		\begin{eqnarray}\label{Eqn:4.5}
   		f(\varkappa,\varrho)\approx\Psi^T(\varrho)\mathcal{F}\Psi(\varkappa),
   		\end{eqnarray}
   		\begin{eqnarray}\label{Eqn:4.6}
   		\mathbb{U}(\varkappa,\varrho)\approx\Psi^T(\varrho)\mathcal{A}\Psi(\varkappa),
   		\end{eqnarray}
   		where, the matrix $\mathcal{F}$ is known and $\mathcal{A}=[a_{ij}]$ denotes the  unknown matrix that must be evaluated. The left hand side (L.H.S) of \ref{Eqn:4.1}, by using the approximation of $\mathbb{U}(\varkappa, \varrho)$ can be written as
   		\begin{align}\label{Eqn:7}
   		\int_{\alpha_1}^{\alpha_2}\rho(\alpha)\frac{\partial^{\alpha}\mathbb{U}(\varkappa,\varrho)} {\partial \varrho^{\alpha}}d\alpha
   		&\approx
   		\left(\int_{\alpha_1}^{\alpha_2}\rho(\alpha)(D_\varrho^{\alpha}\Psi^T(\varrho))d\alpha\right)\mathcal{A}\Psi(\varkappa)\nonumber\\
   		&\approx\left(D_\varrho^{\rho(\alpha)}\Psi^T(\varrho)\right)\mathcal{A}\Psi(\varkappa)\nonumber\\
   		&\approx
   		\Psi^T(\varrho)\left(\hat{D}^{(\alpha_1,\alpha_2,\rho(\alpha))}\right)^T\mathcal{A}\Psi(\varkappa).
   		\end{align}
   		Here, $\hat{D}^{(\alpha_1,\alpha_2,\rho(\alpha))}$ denotes the time-DOF operational matrix.
   		
   		Now, the R.H.S of equation \ref{Eqn:4.1}, with the help of the approximation of  $\mathbb{U}(\varkappa,\varrho)$ can be described as
   		\begin{align}\label{Eqn:4.8}
   		\int_{\beta_1}^{\beta_2}\rho(\beta)\frac{\partial^{\beta}\mathbb{U}(\varkappa,\varrho)} {\partial \varkappa^{\beta}}d\beta
   		&\approx
   		\Psi^T(\varrho) \mathcal{A}\left(\int_{\beta_1}^{\beta_2}\rho(\beta)(D_\varkappa^{\beta}\Psi(\varkappa))d\beta\right)\nonumber\\
   		&\approx\Psi^T(\varrho)\mathcal{A}\left(D_\varkappa^{\rho(\beta)}\Psi(\varkappa)\right)\nonumber\\
   		&\approx
   		\Psi^T(\varrho)\mathcal{A}\left(\hat{D}_\varkappa^{(\beta_1,\beta_2,\rho(\beta))}\right)\Psi(\varkappa).
   		\end{align}
   		Here, $\hat{D}_\varkappa^{(\beta_1,\beta_2,\rho(\beta))}$ denotes the space-DOF operational matrix.
   		
   	Now, approximation of the second term of R.H.S of equation \ref{Eqn:4.1} by using the derivative operational matrix of integer order can be written as
   		\begin{eqnarray}\label{Eqn:4.9}
   		\frac{\partial^2\mathbb{U}(\varkappa,\varrho)}{\partial \varkappa^2}\approx\Psi^T(\varrho)\mathcal{A}\left(\frac{d^2}{d\varkappa^2}\Psi(\varkappa)\right)=\Psi^T(\varrho)\mathcal{A}D^{(2)}\Psi(\varkappa).
   		\end{eqnarray}
   		\begin{eqnarray}\label{Eqn:4.10}
   		\int_0^\varrho(\varrho-\xi)^{-\frac{1}{2}}\left[\frac{\partial^2\mathbb{U}(\varkappa,\xi)}{\partial \varkappa^2}\right]d\xi\approx&\int_0^\varrho(\varrho-\xi)^{-\frac{1}{2}}\left(\Psi^T(\xi)\mathcal{A}D^{(2)}\Psi(\varkappa)\right)d\xi\nonumber\\
   		\approx&\left[\int_0^\varrho\frac{\Psi^T(\xi)}{(\xi-\varrho)^{\frac{1}{2}}}d\xi\right]\mathcal{A}D^{(2)}\Psi(\varkappa)\nonumber\\
   		\approx&\Psi^T(\varrho){P^*}^TAD^{(2)}\Psi(\varkappa).
   		\end{eqnarray}
   		By the use of orthogonal property of shifted Legendre polynomials \cite{mohammadi2011new} one can write
   		\begin{equation}\label{Eqn:4.11}
   		(\Psi_{\mathfrak{g},\Lambda}(\varkappa),\Psi_{\mathfrak{g},\Lambda'}^T(\varkappa))=I_{\mathfrak{g}}^{(\Lambda,\Lambda')}=\left(h_j^{\mathfrak{g}}\delta_{ij}\right)_{0\leq i\leq \Lambda~0\leq j\leq \Lambda'}.
   		\end{equation}
   		Grouping equations \ref{Eqn:4.5}-\ref{Eqn:4.10}, the residual term for the equation \ref{Eqn:4.1} is 
   		\begin{eqnarray}\label{Eqn:4.12}
   		Res_{\mathfrak{p},\mathfrak{q}}(\varkappa,\varrho)
   		&\approx&
   		\Psi^T(\varrho)\left((\hat{D}^{(\alpha_1,\alpha_2,\rho(\alpha))})^T\mathcal{A}+\mathcal{A}-\mathcal{K^*}\mathcal{A}(\hat{D}^{(\beta_1,\beta_2,\rho(\beta))})-{P^*}^TAD^{(2)}-\mathcal{F}\right)\Psi(\varkappa)\nonumber\\
   		&=&
   		\Psi^T(\varrho)\mathcal{G}\Psi(\varkappa),
   		\end{eqnarray}
   		where,
   		\begin{equation*}
   		\mathcal{G}=\left((\hat{D}^{(\alpha_1,\alpha_2,\rho(\alpha))})^T\mathcal{A}+\mathcal{A}-\mathcal{K^*}\mathcal{A}(\hat{D}^{(\beta_1,\beta_2,\rho(\beta))})-{P^*}^TAD^{(2)}-\mathcal{F}\right).
   		\end{equation*}
   		
   		Now, the standard tau method \cite{zaky2020multi} is used to construct the following $\mathfrak{g}(\mathfrak{g}-1)$ linear algebraic equations
   		\begin{eqnarray}\label{Eqn:4.13}
   		I_T^{(\Lambda-1,\Lambda)}\left((\hat{D}^{(\alpha_1,\alpha_2,\rho(\alpha))})^T\mathcal{A}+\mathcal{A}-\mathcal{K^*}\mathcal{A}(\hat{D}^{(\beta_1,\beta_2,\rho(\beta))})-{P^*}^TAD^{(2)}-\mathcal{F}\right)I_l^{(\Lambda',\Lambda'-2)}=0.
   		\end{eqnarray}
   		The IC \ref{Eqn:4.2} and BCs \ref{Eqn:4.3}-\ref{Eqn:4.4}, with the help of equation \ref{Eqn:4.6} can be utilised to obtain
   		\begin{eqnarray}\label{Eqn:4.14}
   		\Psi^T(0)\mathcal{A}\Psi(\varkappa) =\nu(\varkappa),
   		\end{eqnarray}
   		\begin{eqnarray}\label{Eqn:4.15}
   		\Psi^T(\varrho)\mathcal{A}\Psi(0)=\mathfrak{p}(\varrho),
   		\end{eqnarray}
   		\begin{eqnarray}\label{Eqn:4.16}
   		\Psi^T(\varrho)\mathcal{A}\Psi(1)=\mathfrak{q}(\varrho).
   		\end{eqnarray}
   		Equation \ref{Eqn:4.14}, acquired with the help of IC and equations \ref{Eqn:4.15}-\ref{Eqn:4.16} are obtained through BCs. We collocate IC \ref{Eqn:4.14} at  $\mathfrak{g}+1$ \& BCs \ref{Eqn:4.15}-\ref{Eqn:4.16} at $\mathfrak{g}$ points. Equations \ref{Eqn:4.13}-\ref{Eqn:4.16} constitute a linear algebraic set of $(\mathfrak{g}+1)^2$ equations which are solved for the unknowns $a_{ij},~i,j=0, \cdots, \mathfrak{g}$. Here we chose the roots of shifted Legendre polynomials as a collocation points.
%   		\begin{Remark}\label{Rem:4.1}
%   			 The collocation points are taken as the roots of shifted Legendre polynomials.
%   		\end{Remark}

   		\begin{algorithm}[H]\label{algo1}
   			
   			\SetAlgoLined
   			
   			\KwIn{ The constant $ \mathfrak{R}\in \mathbb{N} $ and $\mathfrak{g}\in \mathbb{N} ~or~ N \in \mathbb{N}, ~~f(\varkappa,\varrho) : L^2(\Omega) \to \mathbb{R}.$}

   			\KwOut{ The approximate solutions $\mathbb{U}(\varkappa,\varrho) \approx \Psi^T(\varrho)\mathcal{A}\Psi(\varkappa)$ for numerical solution of DOT-SFWSIPDE(\ref{Eqn:4.1}-\ref{Eqn:4.4}) by using of operational matrix method.}
   			
   			\For{\normalfont Numerical solution of DOT-SFWSIPDE \ref{Eqn:4.1}-\ref{Eqn:4.4} by using operational matrix method}
   			{
   				\textbf{Step-2.1} Generate the basis function $\varphi_{i}(\varkappa),\varphi_j(\varrho)$; $i,j=0,\dots,\mathfrak{g}$, by using LWs as given in section \ref{sec:2}.       \\
   				\textbf{Step-2.2} Approximate the unknown function $\mathbb{U}(\varkappa,\varrho)$  as given in equation \ref{Eqn:4.6} to get unknown vector $\mathcal{A}$. \\
   				\textbf{Step-2.3} Approximate the term $f(\varkappa,\varrho)$ as $f(\varkappa,\varrho)\approx\Psi^T(\varrho)\mathcal{F}\Psi(\varkappa)$ and obtain known vector $\mathcal{F}$. \\
   				\textbf{Step-2.4} Approximate the distributed order time-fractional and space-fractional operational matrix using section \ref{sec:3.3} as 	$D_\varrho^{\rho(\alpha)}Z(\varkappa,\varrho)\approx \Psi^T(\varrho)\left({\hat{D}}^{(\alpha_1,\alpha_2,\rho(\alpha))}\right)^T\mathcal{A}\Psi(\varkappa)$ and $D_\varkappa^{\rho(\beta)}Z(\varkappa,\varrho)\approx \Psi(\varrho)^T\mathcal{A}\left({\hat{D}}^{(\beta_1,\beta_2,\rho(\beta))}\right)\Psi(\varkappa)$, respectively.  \\
   				\textbf{Step-2.5} Approximate the singular integral operational matrix using section \ref{sec:3.1} and equations \ref{Eqn:4.9}-\ref{Eqn:4.12} as	$\Psi^T(\varrho){P^*}^TAD^{(2)}\Psi(\varkappa)$.\\
   				
   				\textbf{Step-2.6} Compute the residual function $Res_{\mathfrak{p},\mathfrak{q}}(\varkappa,\varrho)$ using equations \ref{Eqn:4.5}-\ref{Eqn:4.10} for equation \ref{Eqn:4.1} we get equation \ref{Eqn:4.12} as follows $Res_{\mathfrak{p},\mathfrak{q}}(\varkappa,\varrho)
   				\approx
   				\Psi^T(\varrho)\left((\hat{D}^{(\alpha_1,\alpha_2,\rho(\alpha))})^T\mathcal{A}+\mathcal{A}-\mathcal{K^*}\mathcal{A}(\hat{D}^{(\beta_1,\beta_2,\rho(\beta))})-{P^*}^TAD^{(2)}-\mathcal{F}\right)\Psi(\varkappa)\nonumber.
   				$  \\
   				\textbf{Step-2.7} Apply standard tau method (use equation \ref{Eqn:4.13}) to create $\mathfrak{g}(\mathfrak{g}-1)$ system of linear algebraic equations.\\
   				\textbf{Step-2.8} Use the initial condition (use equation \ref{Eqn:4.14}) to construct $\mathfrak{g}+1$ system of linear algebraic equations with the help of collocation points.\\
   				\textbf{Step-2.9} Use the boundary conditions (equation \ref{Eqn:4.15} and \ref{Eqn:4.16}) to create $\mathfrak{g}+\mathfrak{g}=2\mathfrak{g}$  linear algebraic system of equations with the help of collocation points. \\ 
   				\textbf{Step-2.10} For getting unknown vector $\mathcal{A}$, to solve the system of $(\mathfrak{g}+1)^2$ algebraic linear equations which is evaluated in step (2.6)-(2.8).\\
   				\textbf{Step-2.11} Put the value of $\mathcal{A}$ in step (2.2) and we get the estimated solution $\mathbb{U}(\varkappa,\varrho)$.\\
   			}
   			\caption{To evaluate the numerical solution of one dimensional distributed order time-space fractional integero-partial differential equation (\ref{Eqn:4.1})-(\ref{Eqn:4.4})}
   			
   		\end{algorithm}

   		\subsection{2-D distributed order time-space fractional weakly singular integero differential equation}\label{sec:4.1}
   		In order to describe the numerical method for solving two dimensional DOT-SFIPDEs, we consider the following DOT-SFWSIDE of the form:
   			\begin{eqnarray}\label{Eqn:4.17}
   			\int_{\alpha_1}^{\alpha_2}\rho(\alpha)\frac{\partial^{\alpha}\mathbb{U}(\varkappa,\eta,\varrho)}{\partial \varrho^{\alpha}}d\alpha+\mathbb{U}(\varkappa,\eta,\varrho)=\mathcal{K^*}\int_{\beta_1}^{\beta_2}\rho(\beta)\left[\frac{\partial^{\beta}\mathbb{U}(\varkappa,\eta,\varrho)}{\partial \varkappa^{\beta}}+\frac{\partial^{\beta}\mathbb{U}(\varkappa,\eta,\varrho)}{\partial \eta^{\beta}}\right]d\beta\nonumber\\
   			+\int_0^\varrho(\varrho-\xi)^{-\frac{1}{2}}\left[\frac{\partial^2\mathbb{U}(\varkappa,\eta,\xi)}{\partial \varkappa^2}+\frac{\partial^2\mathbb{U}(\varkappa,\eta,\xi)}{\partial \eta^2}\right]d\xi+f(\varkappa,\eta,\varrho),
   			\end{eqnarray}
   			Where, $\mathcal{K^*}$ is viscosity constant and $(\varkappa,\eta,\varrho)\in \Omega , \alpha_1=0, \alpha_2=1, \beta_1=1, \beta_2=2$ and $\Omega=[0,1]\times[0,1]\times[0,T].$
   			
   			The above  equation \ref{Eqn:4.17} is acquired with the initial condition (IC)
   			\begin{eqnarray}\label{Eqn:4.18}
   			\mathbb{U}(\varkappa,\eta,0)=\nu(\varkappa,\eta) ,~~  0\textless \varkappa \textless 1~and~0\textless\eta\textless 1,
   			\end{eqnarray}
   			and the boundary conditions (BCs) 
   			\begin{eqnarray}\label{Eqn:4.19}
   			\mathbb{U}(0,\eta,\varrho)=p_1(\eta,\varrho),~~~0\textless\eta\textless 1 ~and~0\textless \varrho\textless T,
   			\end{eqnarray}	
   			\begin{eqnarray}\label{Eqn:4.20}
   			\mathbb{U}(1,\eta,\varrho)=p_2(\eta,\varrho), ~~0\textless\eta\textless 1~and~0\textless \varrho\textless T,
   			\end{eqnarray}
   			\begin{eqnarray}\label{Eqn:4.21}
   			\mathbb{U}(\varkappa,0,\varrho)=q_1(\varkappa,\varrho),~~~0\textless\varkappa\textless 1 ~and~0\textless \varrho\textless T,
   			\end{eqnarray}	
   			\begin{eqnarray}\label{Eqn:4.22}
   			\mathbb{U}(\varkappa,1,\varrho)=q_2(\eta,\varrho), ~~0\textless\varkappa\textless 1~and~0\textless \varrho\textless T.
   			\end{eqnarray}
%   			Here, the fractional order derivative is intended to be in the Caputo sense, and $\rho(\alpha),~\rho(\beta)$ are the weight functions that meet the following conditions\cite{gorenflo2013fundamental}
%   			\begin{equation*}
%   			\rho(\alpha)\geq 0, ~~\int_{\alpha_1}^{\alpha_2}\rho(\alpha)d\alpha=\lambda_1\textgreater 0~and~\rho(\beta)\geq 0, ~~\int_{\beta_1}^{\beta_2}\rho(\beta)d\beta=\lambda_2\textgreater 0.
%   			\end{equation*}
   			Before discussing the method, we need to give brief about Kronecker product of two matrices \cite{zaky2020multi}.
   			
   			If
   			\begin{equation*}
   			A=\begin{bmatrix}
   			a_{00} & a_{01}  & \cdots & a_{0\mathfrak{g}}\\
   			a_{10} & a_{11}  & \cdots & a_{1\mathfrak{g}}\\
   			a_{20} & a_{21}  & \cdots & a_{2\mathfrak{g}}\\
   			\vdots & \vdots & \ddots & \vdots\\
   			a_{\mathfrak{g}0} & a_{\mathfrak{g}1} & \cdots & a_{\mathfrak{g}\mathfrak{g}}
   			\end{bmatrix}_{(\mathfrak{g}+1)\times (\mathfrak{g}+1)}
   			B=\begin{bmatrix}
   			b_{00} & b_{01}  & \cdots & b_{0\mathfrak{g}}\\
   			b_{10} & b_{11}  & \cdots & b_{1\mathfrak{g}}\\
   			b_{20} & b_{21}  & \cdots & b_{2\mathfrak{g}}\\
   			\vdots & \vdots & \ddots & \vdots\\
   			b_{\mathfrak{g}0} & b_{\mathfrak{g}1} & \cdots & b_{\mathfrak{g}\mathfrak{g}}
   			\end{bmatrix}_{(\mathfrak{g}+1)\times (\mathfrak{g}+1)}
   			\end{equation*}
   			Then
   			\begin{equation*}
   			A\otimes B=	\begin{bmatrix}
   			a_{00}B & a_{01}B  & \cdots & a_{0\mathfrak{g}}B\\
   			a_{10}B & a_{11}B  & \cdots & a_{1\mathfrak{g}}B\\
   			a_{20}B & a_{21}B  & \cdots & a_{2\mathfrak{g}}B\\
   			\vdots & \vdots & \ddots & \vdots\\
   			a_{\mathfrak{g}0}B & a_{\mathfrak{g}1}B & \cdots & a_{\mathfrak{g}\mathfrak{g}}B
   			\end{bmatrix}_{(\mathfrak{g}+1)^2\times (\mathfrak{g}+1)^2}.
   			\end{equation*}
   			\begin{itemize}
   				
   				\item If A and B are lower(upper) triangular, then $A\otimes B$ is also lower(upper) trianular.
   				\item If A and B are band matrices, then $A\otimes B$ is also a band matrix.
   			\end{itemize}
   	Consider the approximation of the known and unknown function as
   			\begin{eqnarray}\label{Eqn:4.23}
   			f(\varkappa,\eta,\varrho)\approx\Psi^T(\varrho)\mathcal{F}\Psi(\varkappa,\eta),
   			\end{eqnarray}
   			\begin{eqnarray}\label{Eqn:4.24}
   			\mathbb{U}(\varkappa,\eta,\varrho)\approx\Psi^T(\varrho)\mathcal{A}\Psi(\varkappa,\eta),
   			\end{eqnarray}
   			where,
   			\begin{equation*}
   				\mathcal{F}=\begin{bmatrix}
   				f_{00} & f_{01} & f_{02} & \cdots & f_{0\mathfrak{g}^2}\\
   				f_{10} & f_{11} & f_{12} & \cdots & f_{1\mathfrak{g}^2}\\
   				f_{20} & f_{21} & f_{22} & \cdots & f_{2\mathfrak{g}^2}\\
   				\vdots & \vdots & \vdots & \ddots & \vdots\\
   				f_{\mathfrak{g}-1~0} & f_{\mathfrak{g}-1~1} & f_{\mathfrak{g}-1~2} & \cdots & f_{\mathfrak{g}-1~\mathfrak{g}^2}\\
   				f_{\mathfrak{g}0} & f_{\mathfrak{g}1} & f_{\mathfrak{g}2} & \cdots & f_{\mathfrak{g}\mathfrak{g}^2}
   				\end{bmatrix}_{(\mathfrak{g}+1)\times (\mathfrak{g}+1)^2},
   				\end{equation*}
   				\begin{equation*}
   				\mathcal{A}=\begin{bmatrix}
   				a_{00} & a_{01} & a_{02} & \cdots & a_{0\mathfrak{g}^2}\\
   				a_{10} & a_{11} & a_{12} & \cdots & a_{1\mathfrak{g}^2}\\
   				a_{20} & a_{21} & a_{22} & \cdots & a_{2\mathfrak{g}^2}\\
   				\vdots & \vdots & \vdots & \ddots & \vdots\\
   				a_{\mathfrak{g}-1~0} & a_{\mathfrak{g}-1~1} & a_{\mathfrak{g}-1~2} & \cdots & a_{\mathfrak{g}-1~\mathfrak{g}^2}\\
   				a_{\mathfrak{g}0} & a_{\mathfrak{g}1} & a_{\mathfrak{g}2} & \cdots & a_{\mathfrak{g}\mathfrak{g}^2}
   				\end{bmatrix}_{(\mathfrak{g}+1)\times (\mathfrak{g}+1)^2},
   				\end{equation*}
   				where, $\Psi(\varkappa,\eta)=(\Psi(\varkappa)\otimes\Psi(\eta))$, the matrix $\mathcal{F}$ is known and $\mathcal{A}=[a_{ij}]$ denotes the  unknown matrix that must be evaluated. The left hand side (L.H.S) of \ref{Eqn:4.17}, by using the approximation of $\mathbb{U}(\varkappa, \eta,\varrho)$ can be written as
   			\begin{align}\label{Eqn:4.25}
   			\int_{\alpha_1}^{\alpha_2}\rho(\alpha)\frac{\partial^{\alpha}\mathbb{U}(\varkappa,\eta,\varrho)} {\partial \varrho^{\alpha}}d\alpha
   			&\approx
   			\left(\int_{\alpha_1}^{\alpha_2}\rho(\alpha)(D_\varrho^{\alpha}\Psi^T(\varrho))d\alpha\right)\mathcal{A}\Psi(\varkappa,\eta)\nonumber\\
   			&\approx\left(D_\varrho^{\rho(\alpha)}\Psi^T(\varrho)\right)\mathcal{A}\Psi(\varkappa,\eta)\nonumber\\
   			&\approx
   			\Psi^T(\varrho)\left(\hat{D}^{(\alpha_1,\alpha_2,\rho(\alpha))}\right)^T\mathcal{A}\Psi(\varkappa,\eta).
   			\end{align}
   			Here, $\hat{D}^{(\alpha_1,\alpha_2,\rho(\alpha))}$ is the time-DOF operational matrix.
   			
   			Now, the R.H.S of equation \ref{Eqn:4.17}, with the help of the approximation of  $\mathbb{U}(\varkappa,\eta,\varrho)$ can be described as 
   			 \begin{align}\label{Eqn:4.26}
   			 \int_{\beta_1}^{\beta_2}\rho(\beta)\frac{\partial^{\beta}\mathbb{U}(\varkappa,\eta,\varrho)} {\partial \varkappa^{\beta}}d\beta
   			 &\approx
   			\Psi^T(\varrho) \mathcal{A}\left(\int_{\beta_1}^{\beta_2}\rho(\beta)(D_\varkappa^{\beta}\Psi(\varkappa,\eta))d\beta\right)\nonumber\\
   			 &\approx\Psi^T(\varrho)\mathcal{A}\left(\left(D_\varkappa^{\rho(\beta)}\Psi(\varkappa)\right)\otimes\Psi(\eta)\right)\nonumber\\
   			 &\approx
   			 \Psi^T(\varrho)\mathcal{A}\left(\left(\hat{D}_\varkappa^{(\beta_1,\beta_2,\rho(\beta))}\Psi(\varkappa)\right)\otimes\Psi(\eta)\right),
   			 \end{align}
   			 and
   			 \begin{align}\label{Eqn:4.27}
   			 \int_{\beta_1}^{\beta_2}\rho(\beta)\frac{\partial^{\beta}\mathbb{U}(\varkappa,\eta,\varrho)} {\partial \eta^{\beta}}d\beta
   			 &\approx
   			 \Psi^T(\varrho) \mathcal{A}\left(\int_{\beta_1}^{\beta_2}\rho(\beta)(D_\eta^{\beta}\Psi(\varkappa,\eta))d\beta\right)\nonumber\\
   			 &\approx\Psi^T(\varrho)\mathcal{A}\left(\Psi(\varkappa)\otimes\left(D_\eta^{\rho(\beta)}\Psi(\eta)\right)\right)\nonumber\\
   			 &\approx
   			 \Psi^T(\varrho)\mathcal{A}\left(\Psi(\varkappa)\otimes\left(\hat{D}_\eta^{(\beta_1,\beta_2,\rho(\beta))}\Psi(\eta)\right)\right).
   			 \end{align}
   			 Here, $\hat{D}_\varkappa^{(\beta_1,\beta_2,\rho(\beta))},~\hat{D}_\eta^{(\beta_1,\beta_2,\rho(\beta))}$ are the space-DOF operational matrices.\\
   			 Now, approximation of the second term of R.H.S of equation \ref{Eqn:4.17} with the help of derivative operational matrix of integer order can be written as
   			\begin{eqnarray}\label{Eqn:4.28}
   			\frac{\partial^2\mathbb{U}(\varkappa,\eta,\varrho)}{\partial \varkappa^2}\approx\Psi^T(\varrho)\mathcal{A}\left(\left(\frac{d^2}{d\varkappa^2}\Psi(\varkappa)\right)\otimes\Psi(\eta)\right)\approx\Psi^T(\varrho)\mathcal{A}\left((D_\varkappa^{(2)}\Psi(\varkappa))\otimes\Psi(\eta)\right).
   			\end{eqnarray}
   			\begin{eqnarray}\label{Eqn:4.29}
   			\frac{\partial^2\mathbb{U}(\varkappa,\eta,\varrho)}{\partial \eta^2}\approx\Psi^T(\varrho)\mathcal{A}\left(\Psi(\varkappa)\otimes\left(\frac{d^2}{d\varkappa^2}\Psi(\eta)\right)\right)\approx\Psi^T(\varrho)\mathcal{A}\left(\Psi(\varkappa)\otimes(D_\eta^{(2)}\Psi(\eta))\right).
   			\end{eqnarray}
   			\begin{eqnarray}\label{Eqn:4.30}
   			\int_0^\varrho(\varrho-\xi)^{-\frac{1}{2}}\left[\frac{\partial^2\mathbb{U}(\varkappa,\eta,\xi)}{\partial \varkappa^2}\right]d\xi\approx&\int_0^\varrho(\varrho-\xi)^{-\frac{1}{2}}\left(\Psi^T(\xi)\mathcal{A}\left((D_\varkappa^{(2)}\Psi(\varkappa))\otimes\Psi(\eta)\right)\right)d\xi\nonumber\\
   			\approx&\left[\int_0^\varrho\frac{\Psi^T(\xi)}{(\varrho-\xi)^{\frac{1}{2}}}d\xi\right]\mathcal{A}\left((D_\varkappa^{(2)}\Psi(\varkappa))\otimes\Psi(\eta)\right)\nonumber\\
   			\approx&\Psi^T(\varrho){P_1^*}^T\mathcal{A}\left((D_\varkappa^{(2)}\Psi(\varkappa))\otimes\Psi(\eta)\right).
   			\end{eqnarray}
   			\begin{eqnarray}\label{Eqn:4.31}
   			\int_0^\varrho(\varrho-\xi)^{-\frac{1}{2}}\left[\frac{\partial^2\mathbb{U}(\varkappa,\eta,\xi)}{\partial \eta^2}\right]d\xi\approx&\int_0^\varrho(\varrho-\xi)^{-\frac{1}{2}}\left(\Psi^T(\xi)\mathcal{A}\left(\Psi(\varkappa)\otimes(D_\eta^{(2)}\Psi(\eta))\right)\right)d\xi\nonumber\\
   			\approx&\left[\int_0^\varrho\frac{\Psi^T(\xi)}{(\varrho-\xi)^{\frac{1}{2}}}d\xi\right]\mathcal{A}\left(\Psi(\varkappa)\otimes(D_\eta^{(2)}\Psi(\eta))\right)\nonumber\\
   			\approx&\Psi^T(\varrho){P_2^*}^T\mathcal{A}\left(\Psi(\varkappa)\otimes(D_\eta^{(2)}\Psi(\eta))\right).
   			\end{eqnarray}
   		substituting all these approximations in equation \ref{Eqn:4.17}
   		\begin{eqnarray}
   		&\Psi^T(\varrho)\left(\hat{D}^{(\alpha_1,\alpha_2,\rho(\alpha))}\right)^T\mathcal{A}\Psi(\varkappa,\eta)+\Psi^T\mathcal{A}(\Psi(\varkappa)\otimes\Psi(\eta))-\mathcal{K^*}\Psi^T(\varrho)\mathcal{A}\left(\left(\hat{D}_\varkappa^{(\beta_1,\beta_2,\rho(\beta))}\Psi(\varkappa)\right)\otimes\Psi(\eta)\right)\nonumber\\
   		&-\mathcal{K^*}\Psi^T(\varrho)\mathcal{A}\left(\Psi(\varkappa)\otimes\left(\hat{D}_\eta^{(\beta_1,\beta_2,\rho(\beta))}\Psi(\eta)\right)\right)-\Psi^T(\varrho){P_1^*}^T\mathcal{A}\left((D_\varkappa^{(2)}\Psi(\varkappa))\otimes\Psi(\eta)\right)\nonumber\\
   		&-\Psi^T(\varrho){P_2^*}^T\mathcal{A}\left(\Psi(\varkappa)\otimes(D_\eta^{(2)}\Psi(\eta))\right)-\Psi^T\mathcal{F}(\Psi^T(\varkappa)\otimes\Psi^T(\eta))=0.
   		\end{eqnarray}
   		Using the tau method \cite{zaky2020multi} with LW operational matrix, we generate  $(\Lambda-1)\times(\Lambda'-2)\times(\Lambda''-2)$ linear algebraic equations.
   		\begin{eqnarray}\label{Eqn:4.33}
   		I_T^{(\Lambda-1,\Lambda)}(\varrho)\left(\hat{D}^{(\alpha_1,\alpha_2,\rho(\alpha))}\right)^T\mathcal{A}(I_l^{(\Lambda',\Lambda''-1)}\otimes I_h^{(\Lambda-1,\Lambda)})+I_T^{(\Lambda-1,\Lambda)}\mathcal{A}(I_l^{(\Lambda'-1,\Lambda')}\otimes I_h^{(\Lambda''-1,\Lambda'')})\nonumber
   		\end{eqnarray}
   		\begin{eqnarray}
   		-\mathcal{K^*}I_T^{(\Lambda-1,\Lambda)}\mathcal{A}\left(\left(\hat{D}_\varkappa^{(\beta_1,\beta_2,\rho(\beta))}I_l^{(\Lambda'-1,\Lambda')}\right)\otimes I_h^{(\Lambda''-1,\Lambda'')}\right)\nonumber
   		\end{eqnarray}
   		\begin{eqnarray}
   		-\mathcal{K^*}I_T^{(\Lambda-1,\Lambda)}\mathcal{A}\left(I_l^{(\Lambda'-1,\Lambda')}\otimes\left(\hat{D}_\eta^{(\beta_1,\beta_2,\rho(\beta))}I_h^{(\Lambda''-1,\Lambda'')}\right)\right)\nonumber
   		\end{eqnarray}
   		\begin{eqnarray}
   		-I_T^{(\Lambda-1,\Lambda)}{P_1^*}^T\mathcal{A}\left((D_\varkappa^{(2)}I_l^{(\Lambda'-1,\Lambda')})\otimes I_h^{(\Lambda''-1,\Lambda'')}\right)\nonumber
   		\end{eqnarray}
   		\begin{eqnarray}
   		-I_T^{(\Lambda-1,\Lambda)}{P_2^*}^T\mathcal{A}\left(I_l^{(\Lambda'-1,\Lambda')}\otimes(D_\eta^{(2)}I_h^{(\Lambda''-1,\Lambda'')})\right)\nonumber
   		\end{eqnarray}
   		\begin{eqnarray}
   		-I_T^{(\Lambda-1,\Lambda)}\mathcal{F}(I_l^{(\Lambda'-1,\Lambda')}\otimes I_h^{(\Lambda''-1,\Lambda'')})=0
   		\end{eqnarray}
   		The IC \ref{Eqn:4.18} and BCs \ref{Eqn:4.19}-\ref{Eqn:4.22}, with the help of equation \ref{Eqn:4.24} can be utilised to obtain
   			\begin{eqnarray}\label{Eqn:4.34}
   			\Psi^T(0)\mathcal{A}(\Psi(\varkappa)\otimes\Psi(\eta))=\nu(\varkappa,\eta). 
   			\end{eqnarray} 		
   			\begin{eqnarray}\label{Eqn:4.35}
   			\Psi^T(\varrho)\mathcal{A}(\Psi(0)\otimes\Psi(\eta))=p_1(\eta,\varrho).
   			\end{eqnarray} 				
   			\begin{eqnarray}\label{Eqn:4.36}
   			\Psi^T(\varrho)\mathcal{A}(\Psi(1)\otimes\Psi(\eta))=p_2(\eta,\varrho).
   			\end{eqnarray}		
   			\begin{eqnarray}\label{Eqn:4.37}
   			\Psi^T(\varrho)\mathcal{A}(\Psi(\varkappa)\otimes\Psi(0))=q_1(\varkappa,\varrho).
   			\end{eqnarray}		
   			\begin{eqnarray}\label{Eqn:4.38}
   		\Psi^T(\varrho)\mathcal{A}(\Psi(\varkappa)\otimes\Psi(1))=q_2(\varkappa,\varrho).
   			\end{eqnarray}
   			Equation \ref{Eqn:4.34}, acquired with the help of IC and equations \ref{Eqn:4.35}-\ref{Eqn:4.38} are obtained through BCs. We collocate IC \ref{Eqn:4.34} at  $(\mathfrak{g+1})^2$ \& BCs \ref{Eqn:4.35}-\ref{Eqn:4.38} at $4(\mathfrak{g})^2$ points. Equations \ref{Eqn:4.33}-\ref{Eqn:4.38} constitute linear algebraic set of $(\mathfrak{g}+1)^3$ equations which are solved for the unknowns $a_{ij},~i=0,1 \cdots, \mathfrak{g}~\&~j=0,1,\cdots,\mathfrak{g}^2$.
   			Here we chose the roots of shifted Legendre polynomials as a collocation points.
%   			\begin{Remark}\label{Rem:4.2}
%   				The collocation points are taken as the roots of shifted Legendre polynomials.
%   			\end{Remark}

   			\begin{algorithm}[H]\label{algo2}
   				
   				\SetAlgoLined
   				
   				\KwIn{ The constant $ K\in \mathbb{N} $ and $\mathfrak{g} \in \mathbb{N} ~or~ N \in \mathbb{N}, ~~f(\varkappa,\eta,\varrho):L^2(\Omega=[0,1]\times[0,1]\times[0,1]) \to \mathbb{R}.$}

   				\KwOut{ The approximate solutions $\mathbb{U}(\varkappa,\eta,\varrho) \approx \Psi^T(\varrho)\mathcal{A}\Psi(\varkappa,\eta)$ for numerical solution of DOT-SFWSIPDE \ref{Eqn:4.17}-\ref{Eqn:4.22} by using of  operational matrix method.}
   				
   				\For{\normalfont Numeical solution of DOT-SFWSIPDE \ref{Eqn:4.17}-\ref{Eqn:4.22} by using of operational matrix method}
   				{
   					\textbf{Step-2.1} Generate the basis function $\varphi_{i}(\varkappa),\varphi_j(\varrho),\varphi_k(\eta)$; $i,j,k=0,\dots,\mathfrak{g}$ by using LWs as given in section \ref{sec:2}.       \\
   					\textbf{Step-2.2} Approximate the unknown function $\mathbb{U}(\varkappa,\eta,\varrho)$  as given in equation \ref{Eqn:4.24} to get unknown vector $\mathcal{A}$. \\
   					\textbf{Step-2.3} Approximate the source term $f(\varkappa,\eta,\varrho)$ as $f(\varkappa,\eta,\varrho)\approx\Psi^T(\varrho)\mathcal{F}\Psi(\varkappa,\eta)$ to get the known vector say $\mathcal{F}$. \\
   					\textbf{Step-2.4} Approximate the distributed order time-fractional and space-fractional operational matrix using section \ref{sec:3.3}	$D_\varrho^{\rho(\alpha)}\mathbb{U}(\varkappa,\eta,\varrho)\approx \Psi^T(\varrho)\left({\hat{D}}^{(\alpha_1,\alpha_2,\rho(\alpha))}\right)^T\mathcal{A}(\Psi(\varkappa)\otimes\Psi(\eta))$, $D_\varkappa^{\rho(\beta)}\mathbb{U}(\varkappa,\eta,\varrho)\approx \Psi(\varrho)^T\mathcal{A}\left({\hat{D}}^{(\beta_1,\beta_2,\rho(\beta))}\Psi(\varkappa)\otimes\Psi(\eta)\right)$ and
   					$D_\eta^{\rho(\beta)}\mathbb{U}(\varkappa,\eta,\varrho)\approx \Psi(\varrho)^T\mathcal{A}\Psi(\varkappa)\otimes\left({\hat{D}}^{(\beta_1,\beta_2,\rho(\beta))}\Psi(\eta)\right)$ respectively.\\
   					\textbf{Step-2.5} Approximate the singular integral operational matrix using section \ref{sec:3.1} and equation \ref{Eqn:4.30}-\ref{Eqn:4.31} as	$\Psi^T(\varrho){P_1^*}^TA(D^{(2)}\Psi(\varkappa)\otimes\Psi(\eta)$ and $\Psi^T(\varrho){P_2^*}^TA(\Psi(\varkappa)\otimes D^{(2)}\Psi(\eta))$, respectively.\\
   					
   					\textbf{Step-2.6} Compute the matrix $I_T^{(\Lambda-1,\Lambda)}, I_l^{(\Lambda',\Lambda'-2)}$ and $I_h^{(\Lambda'',\Lambda''-2)}$ with the help of equation \ref{Eqn:4.11}.\\
   					\textbf{Step-2.7} Apply standard tau method (use equation \ref{Eqn:4.33}) to create $\mathfrak{g}(\mathfrak{g}-1)^2$ system of linear algebraic equations.\\
   					\textbf{Step-2.8} Use the initial condition (use equation \ref{Eqn:4.34}) to construct $(\mathfrak{g}+1)^2$ system of linear algebraic equations with the help of collocation points.\\
   					\textbf{Step-2.9} Use the boundary conditions (equations \ref{Eqn:4.35}-\ref{Eqn:4.38}) to create $4\mathfrak{g}^2$  linear algebraic system of equations with the help of collocation points. \\ 
   					\textbf{Step-2.10} For getting unknown vector $\mathcal{A}$, to solve the system of $(\mathfrak{g}+1)^3$ linear algebraic equations which is evaluated in step (2.7)-(2.9).\\
   					\textbf{Step-2.11} Put the value of $\mathcal{A}$ in step (2.2) and we get the estimated solution $\mathbb{U}(\varkappa,\eta,\varrho)$.\\
   				}
   				\caption{To evaluate the numerical solution of two dimensional distributed order time-space fractional integero-partial differential equation \ref{Eqn:4.17}-\ref{Eqn:4.22}.}			
   			\end{algorithm}

   		\section{Error bounds and convergence analysis}\label{sec:5}

   		\begin{Theorem}\label{Thm:5.01}
   				Assume $\{f(\varkappa,\varrho)\}_{\mathcal{N}},~ \{v(\varkappa,\varrho)\}_{\mathcal{N}},~  \{w(\varkappa,\varrho)\}_{\mathcal{N}},~
   			 \{w_1(\varkappa,\varrho)\}_{\mathcal{N}},~ \{\mathbb{U}(\varkappa,\varrho)\}_{\mathcal{N}}$ be the approximate solutions  w.r.t. continuous functions $f(\varkappa,\varrho), ~v(\varkappa,\varrho),~ w(\varkappa,\varrho),~ w_1(\varkappa,\varrho),~ \mathbb{U}(\varkappa,\varrho)$, respectively, defined over the domain $\Omega(=[0,1]\times[0,1])$ with  second order bounded mixed derivative, say $\left\lvert \frac{\partial^4f(\varkappa,\varrho)}{\partial \varkappa^2\partial \varrho^2}\right\rvert\leq\mathcal{B},~ \left\lvert \frac{\partial^4v(\varkappa,\varrho)}{\partial \varkappa^2\partial \varrho^2}\right\rvert\leq\mathcal{B}_1, ~\left\lvert \frac{\partial^4w(\varkappa,\varrho)}{\partial \varkappa^2\partial \varrho^2}\right\rvert\leq\mathcal{B}_2, ~\left\lvert \frac{\partial^4w_1(\varkappa,\varrho)}{\partial \varkappa^2\partial \varrho^2}\right\rvert\leq\mathcal{B}_3, ~ \left\lvert\frac{\partial^4\mathbb{U}(\varkappa,\varrho)}{\partial \varkappa^2\partial \varrho^2}\right\rvert\leq\mathcal{B}_0$, for some positive constant $\mathcal{B},~ \mathcal{B}_1,~ \mathcal{B}_2, ~\mathcal{B}_3,~ \mathcal{B}_0$, where, $v=\frac{\partial \mathbb{U}(\varkappa,\varrho)}{\partial\varrho}$,     $w=\frac{\partial^2 \mathbb{U}(\varkappa,\varrho)}{\partial\varkappa^2}~and~w_1=\frac{\partial \mathbb{U}(\varkappa,\varrho)}{\partial\varkappa}$.  Then\\			
   				$(a)$ $H(\varkappa,\varrho)$ can be expressed in terms of LWs infinite series which converges uniformly to the function $H(\varkappa,\varrho)$, that is
   				\begin{equation*}
   				H(\varkappa,\varrho)=\sum_{\mathfrak{h}=1}^\infty\sum_{\mathfrak{g}=1}^\infty\sum_{s'=1}^\infty\sum_{\mathfrak{g}'=1}^\infty C_{\mathfrak{h}\mathfrak{g} \mathfrak{h}'\mathfrak{g}'}\Psi_{\mathfrak{h}\mathfrak{g} \mathfrak{h}'\mathfrak{g}'}(\varkappa,\varrho),
   				\end{equation*}
   				where, $C_{\mathfrak{h}\mathfrak{g} \mathfrak{h}'\mathfrak{g}'}=\left<H(\varkappa,\varrho),\Psi_{\mathfrak{h}\mathfrak{g} \mathfrak{h}'\mathfrak{g}'}\right>_{L^2(\Omega)}$ and
   				$H(\varkappa,\varrho)\in\left\{f(\varkappa,\varrho), \frac{\partial \mathbb{U}}{\partial\varrho}, \frac{\partial^2 \mathbb{U}}{\partial \varkappa^2}, \frac{\partial Z}{\partial \varkappa}, \mathbb{U}(\varkappa,\varrho)\right\}.$\\
   				(b) The bound of error is \begin{equation*}
   				\left||\epsilon_{H}\right||_{L^2}^2\leq {9{\mathcal{B}_*}^2}\sum_{\mathfrak{h}=2^{\mathfrak{R}-1}+1}^\infty\sum_{\mathfrak{g}=\Lambda}^\infty\sum_{\mathfrak{h}'=2^{\mathfrak{R}'-1}+1}^\infty\sum_{\mathfrak{g}'=\Lambda'}^\infty \frac{1}{256(\mathfrak{h}\mathfrak{h}')^5(2\mathfrak{g}-3)^4(2\mathfrak{g}'-3)^4},
   				\end{equation*}
%   				\begin{equation*}
%   				\left||\epsilon_{v}\right||_{L^2}^2\leq {9{\mathcal{B}_1}^2}\sum_{n=2^{K-1}+1}^\infty\sum_{m=M}^\infty\sum_{n'=2^{K'-1}+1}^\infty\sum_{m'=M'}^\infty \frac{1}{256(nn')^5(2m-3)^4(2m'-3)^4},
%   				\end{equation*}
%   				\begin{equation*}
%   				\left||\epsilon_{w}\right||_{L^2}^2\leq {9{\mathcal{B}_2}^2}\sum_{n=2^{K-1}+1}^\infty\sum_{m=M}^\infty\sum_{n'=2^{K'-1}+1}^\infty\sum_{m'=M'}^\infty \frac{1}{256(nn')^5(2m-3)^4(2m'-3)^4},
%   				\end{equation*}
%   				\begin{equation*}
%   				\left||\epsilon_{w_1}\right||_{L^2}^2\leq {9{\mathcal{B}_3}^2}\sum_{n=2^{K-1}+1}^\infty\sum_{m=M}^\infty\sum_{n'=2^{K'-1}+1}^\infty\sum_{m'=M'}^\infty \frac{1}{256(nn')^5(2m-3)^4(2m'-3)^4},
%   				\end{equation*}
%   				\begin{equation*}
%   				\left||\epsilon_{U}\right||_{L^2}^2\leq {9{\mathcal{B}_0}^2}\sum_{n=2^{K-1}+1}^\infty\sum_{m=M}^\infty\sum_{n'=2^{K'-1}+1}^\infty\sum_{m'=M'}^\infty \frac{1}{256(nn')^5(2m-3)^4(2m'-3)^4},
%   				\end{equation*}
   				where,
   				\begin{equation*}
   				\left||\epsilon_H\right||_{L^2}^2=\int_0^1\int_0^1\lvert H(\varkappa,\varrho)-\sum_{\mathfrak{h}=1}^{2^{\mathfrak{R}-1}}\sum_{\mathfrak{g}=0}^{\Lambda-1}\sum_{\mathfrak{h}'=1}^{2^{\mathfrak{R}'-1}}\sum_{\mathfrak{g}'=0}^{\Lambda'-1}C_{\mathfrak{h}\mathfrak{g} \mathfrak{h}'\mathfrak{g}'}\Psi_{\mathfrak{h}\mathfrak{g} \mathfrak{h}'\mathfrak{g}'}(\varkappa,\varrho)\rvert^2 d\varkappa d\varrho,
   				\end{equation*} 				
   				and $\mathcal{B_*}\in\{\mathcal{B}, \mathcal{B}_1, \mathcal{B}_2, \mathcal{B}_3, \mathcal{B}_0\}$. It should be noted that $\mathcal{B},~\mathcal{B}_1,~\mathcal{B}_2,~\mathcal{B}_3,~\mathcal{B}_0$ correspond to $f,~v,~w,~w_1,~\mathbb{U}$, respectively.
   				\begin{proof}
   					The proof of this theorem is similar to the proof of theorem 1 given in reference \cite{singh2018application}. Consider, equation (22) and applying the inequality $2^\mathfrak{R}\geq2\mathfrak{h}$ \&
   					$2^{\mathfrak{R}'}\geq2{\mathfrak{h}'}$ for $\mathfrak{R},~\mathfrak{R}',~\mathfrak{h},~\mathfrak{h}'\in \mathbb{Z^{+}}$,
   					we have, $\frac{1}{2^{\mathfrak{R}}}\leq\frac{1}{2\mathfrak{h}}$ \& $\frac{1}{2^{\mathfrak{R}'}}\leq\frac{1}{2{\mathfrak{h}'}}$.\\ 
   					Then
   					\begin{equation*}
   					\left||\epsilon_{H}\right||_{L^2}^2\leq {9{\mathcal{B_*}}^2}\sum_{\mathfrak{h}=2^{\mathfrak{R}-1}+1}^\infty\sum_{\mathfrak{g}=\Lambda}^\infty\sum_{\mathfrak{h}'=2^{\mathfrak{R}'-1}+1}^\infty\sum_{\mathfrak{g}'=\Lambda'}^\infty \frac{1}{256(\mathfrak{h}\mathfrak{h}')^5(2\mathfrak{g}-3)^4(2\mathfrak{g}'-3)^4},
   					\end{equation*}
   					where,
   					\begin{equation*}
   					\left||\epsilon_H\right||_{L^2}^2=\int_0^1\int_0^1\lvert H(\varkappa,\varrho)-\sum_{\mathfrak{h}=1}^{2^{K-1}}\sum_{\mathfrak{g}=0}^{\Lambda-1}\sum_{\mathfrak{h}'=1}^{2^{\mathfrak{R}'-1}}\sum_{\mathfrak{g}'=0}^{\Lambda'-1}C_{\mathfrak{h}\mathfrak{g} \mathfrak{h}'\mathfrak{g}'}\Psi_{\mathfrak{h}\mathfrak{g} \mathfrak{h}'\mathfrak{g}'}(\varkappa,\varrho)\rvert^2 d\varkappa d\varrho.
   					\end{equation*}
   					$~~~~~~~~~~~~H(\varkappa,\varrho)\in\left\{f(\varkappa,\varrho), \frac{\partial \mathbb{U}}{\partial\varrho}, \frac{\partial^2 \mathbb{U}}{\partial \varkappa^2}, \frac{\partial \mathbb{U}}{\partial \varkappa}, \mathbb{U}(\varkappa,\varrho)\right\}$ and $\mathcal{B_*}\in\{\mathcal{B}, \mathcal{B}_1, \mathcal{B}_2, \mathcal{B}_3, \mathcal{B}_0\}$.
   				\end{proof}		
   		\end{Theorem}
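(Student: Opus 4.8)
The plan is to reduce both parts of the theorem to a single decay estimate for the Legendre-wavelet coefficients. Since the one-dimensional families $\{\Psi_{\mathfrak{h}\mathfrak{g}}\}$ are orthonormal on $[0,1]$ (they have disjoint supports across $\mathfrak{h}$, and the normalization $\sqrt{\mathfrak{g}+\tfrac12}\,2^{\mathfrak{R}/2}$ exactly normalizes each rescaled shifted Legendre polynomial), their tensor products $\Psi_{\mathfrak{h}\mathfrak{g}\mathfrak{h}'\mathfrak{g}'}(\varkappa,\varrho)=\Psi_{\mathfrak{h}\mathfrak{g}}(\varkappa)\Psi_{\mathfrak{h}'\mathfrak{g}'}(\varrho)$ form an orthonormal basis of $L^2(\Omega)$, $\Omega=[0,1]\times[0,1]$. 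Each $H\in\{f,\partial_\varrho\mathbb{U},\partial_\varkappa^2\mathbb{U},\partial_\varkappa\mathbb{U},\mathbb{U}\}$ is continuous on the compact set $\Omega$, hence lies in $L^2(\Omega)$ and admits the expansion claimed in (a) with $C_{\mathfrak{h}\mathfrak{g}\mathfrak{h}'\mathfrak{g}'}=\langle H,\Psi_{\mathfrak{h}\mathfrak{g}\mathfrak{h}'\mathfrak{g}'}\rangle_{L^2(\Omega)}$; by Parseval's identity, $\|\epsilon_H\|_{L^2}^2$ equals the sum of the $|C_{\mathfrak{h}\mathfrak{g}\mathfrak{h}'\mathfrak{g}'}|^2$ over the indices discarded in the truncation. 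So the whole theorem reduces to bounding a single coefficient with the generic constant $\mathcal{B}_*$, and the uniform convergence in (a) will follow by the Weierstrass $M$-test, since on its support $|\Psi_{\mathfrak{h}\mathfrak{g}}(\varkappa)|\le\sqrt{\mathfrak{g}+\tfrac12}\,2^{\mathfrak{R}/2}$ and the product of such sup-norms against the coefficient bound is summable.

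To estimate $C_{\mathfrak{h}\mathfrak{g}\mathfrak{h}'\mathfrak{g}'}$ I would restrict the inner-product integral to the wavelet's support $[\tfrac{\mathfrak{h}-1}{2^{\mathfrak{R}-1}},\tfrac{\mathfrak{h}}{2^{\mathfrak{R}-1}}]\times[\tfrac{\mathfrak{h}'-1}{2^{\mathfrak{R}'-1}},\tfrac{\mathfrak{h}'}{2^{\mathfrak{R}'-1}}]$ and substitute $s=2^{\mathfrak{R}}\varkappa-2\mathfrak{h}+1$, $t=2^{\mathfrak{R}'}\varrho-2\mathfrak{h}'+1$; this maps the support onto $[-1,1]^2$, turns the wavelet factors into the standard Legendre polynomials $p_{\mathfrak{g}}(s)p_{\mathfrak{g}'}(t)$, and produces the prefactor $\sqrt{\mathfrak{g}+\tfrac12}\sqrt{\mathfrak{g}'+\tfrac12}\,2^{-\mathfrak{R}/2}2^{-\mathfrak{R}'/2}$. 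The core step is then to integrate by parts twice in $s$ and twice in $t$, using the antiderivative identity $\int p_n=\tfrac{1}{2n+1}(p_{n+1}-p_{n-1})$ together with $p_{n+1}(\pm1)=p_{n-1}(\pm1)$ so that all boundary terms cancel; after these four integrations the integral becomes a fixed linear combination, with coefficients of size $\tfrac{1}{(2\mathfrak{g}\pm1)(2\mathfrak{g}\pm3)}$ and likewise for $\mathfrak{g}'$, of integrals of $\partial_s^2\partial_t^2$ of the rescaled $H$ against products of Legendre polynomials. The chain rule contributes a further $2^{-2\mathfrak{R}}2^{-2\mathfrak{R}'}$, and the hypothesis $|\partial_\varkappa^2\partial_\varrho^2 H|\le\mathcal{B}_*$ bounds the integrand. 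Estimating the remaining Legendre integrals by their $L^1([-1,1])$ norms, using $(2\mathfrak{g}+1)(2\mathfrak{g}\pm3)\ge(2\mathfrak{g}-3)^2$ for $\mathfrak{g}\ge2$ (and the analogue in $\mathfrak{g}'$), and finally invoking $2^{\mathfrak{R}}\ge2\mathfrak{h}$, $2^{\mathfrak{R}'}\ge2\mathfrak{h}'$ to collect the three powers of $2^{\mathfrak{R}}$ into $2^{-5\mathfrak{R}/2}\le(2\mathfrak{h})^{-5/2}$, yields $|C_{\mathfrak{h}\mathfrak{g}\mathfrak{h}'\mathfrak{g}'}|^2\le\dfrac{9\mathcal{B}_*^2}{256(\mathfrak{h}\mathfrak{h}')^5(2\mathfrak{g}-3)^4(2\mathfrak{g}'-3)^4}$.

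With the coefficient bound in hand, part (b) follows by summing it over the discarded indices, the series converging by comparison with $\sum\mathfrak{h}^{-5}$ and $\sum\mathfrak{g}^{-4}$; part (a)'s uniform convergence follows by the $M$-test, since the decay $\propto(\mathfrak{h}\mathfrak{h}')^{-5/2}(2\mathfrak{g}-3)^{-2}$ of the coefficients beats the $\sqrt{\mathfrak{g}}\,2^{\mathfrak{R}/2}$ growth of the basis sup-norms. The argument runs identically for each $H$ in the list, with $\mathcal{B}_*$ specialized to $\mathcal{B},\mathcal{B}_1,\mathcal{B}_2,\mathcal{B}_3,\mathcal{B}_0$ in turn, and it parallels the proof of Theorem 1 in \cite{singh2018application}. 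I expect the only real difficulty to be the constant bookkeeping: one must keep track of every factor coming from the change of variables, the wavelet normalization, the two chain-rule scalings, and the four antiderivative identities, and choose the $L^1$ bounds on the Legendre combinations sharply enough to land on the exponent $(2\mathfrak{g}-3)^4$ and the explicit constant $9/256$ rather than a weaker one — this is where following the computation in \cite{singh2018application} is essential. A secondary subtlety worth handling carefully is that $\|\Psi_{\mathfrak{h}\mathfrak{g}}\|_\infty$ itself grows with the resolution, so the uniform-convergence claim genuinely needs the double integration by parts, not merely $H\in L^2(\Omega)$.
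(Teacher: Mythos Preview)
Your proposal is correct and follows essentially the same route as the paper's own proof, which simply defers to Theorem~1 of \cite{singh2018application} and records the key inequality $2^{\mathfrak{R}}\ge 2\mathfrak{h}$, $2^{\mathfrak{R}'}\ge 2\mathfrak{h}'$. You have in fact filled in precisely the details the paper omits --- the change of variables onto $[-1,1]^2$, the two successive integrations by parts in each variable via $\int p_n=\tfrac{1}{2n+1}(p_{n+1}-p_{n-1})$ with vanishing boundary terms, the chain-rule scalings, and the final use of $2^{\mathfrak{R}}\ge 2\mathfrak{h}$ to convert powers of $2^{-\mathfrak{R}}$ into $(\mathfrak{h})^{-5/2}$ --- so there is no divergence in approach, only in level of detail.
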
	
 \begin{Theorem}
 	Let $\{J(\varkappa,\varrho)\}_\mathcal{N}$ represents the approximate solution of continuous function $J(\varkappa,\varrho)$ for $\varkappa,\varrho\in[0,1]$ such that $\left|\frac{\partial^6\mathbb{U}(\varkappa,\varrho)}{\partial \varkappa^4\partial\xi^2}\right|\leq\mathcal{B}_2$, where $\mathcal{B}_2$ is a positive constant, then
 	\begin{align*}
 	\|J(\varkappa,\varrho)-(J(\varkappa,\varrho))_\mathcal{N}\|_2^2\leq 36 \mathcal{B}_2\sum_{\mathfrak{h}=2^{\mathfrak{R}-1}+1}^\infty\sum_{\mathfrak{g}=\Lambda}^\infty\sum_{\mathfrak{h}'=2^{\mathfrak{R}'-1}+1}^\infty\sum_{\mathfrak{g}'=\Lambda'}^\infty \frac{1}{256(\mathfrak{h}\mathfrak{h}')^5(2\mathfrak{g}-3)^4(2\mathfrak{g}'-3)^4},
 	\end{align*}
 	where, $J(\varkappa,\varrho)$=$\int_0^\varrho(\varrho-\xi)^{-\frac{1}{2}}\left[\frac{\partial^2\mathbb{U}(\varkappa,\xi)}{\partial \varkappa^2}\right]d\xi$.
 \end{Theorem}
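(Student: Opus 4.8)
The plan is to derive this estimate as a direct corollary of Theorem~\ref{Thm:5.01}(b), applied with the choice $H=J$. The whole argument then reduces to two ingredients. First, one must check that $J(\varkappa,\varrho)$ is continuous on $\Omega=[0,1]\times[0,1]$, so that, exactly as in Theorem~\ref{Thm:5.01}(a) (whose proof follows \cite{singh2018application}), $J$ possesses a uniformly convergent Legendre wavelet expansion and $\|J-(J)_{\mathcal N}\|_2^2$ is literally the $L^2$-norm of the truncation tail to which part~(b) applies. Second, one must bound the mixed fourth derivative $\partial^4 J/\partial\varkappa^2\partial\varrho^2$ on $\Omega$ by a constant of the form $2\mathcal{B}_2$; then Theorem~\ref{Thm:5.01}(b) with $\mathcal{B}_*=2\mathcal{B}_2$ produces the coefficient $9(2\mathcal{B}_2)^2=36\mathcal{B}_2^2$ in front of the quadruple sum, which is the claimed bound (the statement writes $36\mathcal{B}_2$, evidently a typo for $36\mathcal{B}_2^2$ in line with the $9\mathcal{B}_*^2$ pattern of Theorem~\ref{Thm:5.01}).

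For continuity, I would use the substitution $\xi=\varrho(1-t^2)$, which turns $J$ into $2\varrho^{1/2}\int_0^1 \bigl(\partial^2\mathbb{U}/\partial\varkappa^2\bigr)(\varkappa,\varrho(1-t^2))\,dt$, an integral over the fixed interval $[0,1]$ with a bounded, jointly continuous integrand (boundedness of $\partial^2\mathbb{U}/\partial\varkappa^2$ follows from the hypothesis on $\partial^6\mathbb{U}/\partial\varkappa^4\partial\xi^2$ together with smoothness of $\mathbb{U}$); hence $J$ is continuous in $(\varkappa,\varrho)$ and the expansion step is immediate.

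For the derivative bound, differentiating twice in $\varkappa$ under the integral sign is legitimate (the weakly singular kernel is $\varkappa$-free), giving $\partial^2 J/\partial\varkappa^2=\int_0^\varrho(\varrho-\xi)^{-1/2}\bigl(\partial^4\mathbb{U}/\partial\varkappa^4\bigr)(\varkappa,\xi)\,d\xi$. To apply the two $\varrho$-derivatives I would again pass through $\xi=\varrho(1-t^2)$ (equivalently, integrate by parts against $(\varrho-\xi)^{1/2}$) so that differentiation in $\varrho$ no longer lands on the singular factor; this moves the two time-derivatives onto $\mathbb{U}$ and leaves an expression whose principal integrand is $(\varrho-\xi)^{-1/2}\bigl(\partial^6\mathbb{U}/\partial\varkappa^4\partial\xi^2\bigr)(\varkappa,\xi)$. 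Bounding the integrand by $\mathcal{B}_2(\varrho-\xi)^{-1/2}$ and using $\int_0^\varrho(\varrho-\xi)^{-1/2}d\xi=2\sqrt{\varrho}\le 2$ on $[0,1]$ then gives $\bigl|\partial^4 J/\partial\varkappa^2\partial\varrho^2\bigr|\le 2\mathcal{B}_2$, which closes the argument.

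The main obstacle is exactly this last step. Because the kernel is Abel type, one cannot differentiate $\int_0^\varrho(\varrho-\xi)^{-1/2}\bigl(\partial^4\mathbb{U}/\partial\varkappa^4\bigr)(\varkappa,\xi)\,d\xi$ twice in $\varrho$ naively under the integral sign — that would produce the non-integrable kernels $(\varrho-\xi)^{-3/2}$ and $(\varrho-\xi)^{-5/2}$. The substitution/integration-by-parts regularization restores integrability, but it also generates boundary contributions at $\xi=0$ — terms of the form $c_1\varrho^{-1/2}\bigl(\partial^4\mathbb{U}/\partial\varkappa^4\bigr)(\varkappa,0)+c_2\varrho^{-1/2}\bigl(\partial^5\mathbb{U}/\partial\varkappa^4\partial\xi\bigr)(\varkappa,0)$, plus a $\varrho^{-3/2}$ contribution after the second differentiation — which are not pointwise bounded near $\varrho=0$ in general. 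A fully rigorous proof therefore needs either the additional assumption that $\partial^4_{\varkappa}\mathbb{U}$ and $\partial^5_{\varkappa^4\xi}\mathbb{U}$ vanish at $\xi=0$ (consistent with homogeneous initial data), or the $L^2$ estimate should be interpreted with a mild weight absorbing the integrable endpoint singularity; granting one of these, the computation outlined above goes through and yields precisely the stated bound.
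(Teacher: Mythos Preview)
Your approach differs from the paper's in a key way: you read $(J)_{\mathcal N}$ as the Legendre-wavelet truncation of $J$ itself and then try to apply Theorem~\ref{Thm:5.01}(b) with $H=J$, which forces you to bound $\partial^4 J/\partial\varkappa^2\partial\varrho^2$ pointwise and runs into exactly the Abel-kernel endpoint problem you describe. The paper instead interprets $(J)_{\mathcal N}$ as the Abel integral applied to the truncation of $w=\partial^2\mathbb{U}/\partial\varkappa^2$ (this is what the numerical scheme in \S\ref{sec:4.2}, equation~\eqref{Eqn:4.10}, actually computes), so that
\[
J-(J)_{\mathcal N}=\int_0^\varrho(\varrho-\xi)^{-1/2}\Bigl[w(\varkappa,\xi)-(w)_{\mathcal N}(\varkappa,\xi)\Bigr]\,d\xi.
\]
The kernel is then factored out via $\bigl(\int_0^\varrho(\varrho-\xi)^{-1/2}d\xi\bigr)^2=4\varrho\le 4$, and Theorem~\ref{Thm:5.01}(b) is applied directly to $w$, for which the relevant mixed derivative is $\partial^4 w/\partial\varkappa^2\partial\varrho^2=\partial^6\mathbb{U}/\partial\varkappa^4\partial\varrho^2$, bounded by $\mathcal B_2$ by hypothesis. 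No $\varrho$-differentiation of the Abel integral is ever performed, so the boundary terms at $\xi=0$ that obstruct your argument simply never appear; the paper's route is both shorter and free of the extra vanishing hypotheses you would need. Your remark about $36\mathcal B_2$ versus $36\mathcal B_2^2$ is well taken: the paper's own computation carries the same slip in the intermediate step.
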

 	\begin{proof}
 		Consider the term $|| J(\varkappa,\varrho)-\{J(\varkappa,\varrho)\}_\mathcal{N}||_2^2$ 
 		\begin{align*}
 		|| J(\varkappa,\varrho)-\{J(\varkappa,\varrho)\}_\mathcal{N}||_2^2= \left\Vert\int_0^\varrho(\varrho-\xi)^{-\frac{1}{2}}\left(\frac{\partial^2 \mathbb{U}(\varkappa,\xi)}{\partial\varkappa^2}-\left(\frac{\partial^2 \mathbb{U}(\varkappa,\xi)}{\partial\varkappa^2}\right)_\mathcal{N}\right)d\xi\right\Vert_2^2
 		\end{align*}
 		Now using theorem \ref{Thm:5.01}, then we have
 		\begin{align*}
 		|| J(\varkappa,\varrho)-\{J(\varkappa,\varrho)\}_\mathcal{N}||_2^2&\leq\left[\int_0^\varrho(\varrho-\xi)^{-\frac{1}{2}}d\xi\right]^2\left\Vert\frac{\partial^2\mathbb{U}(\varkappa,\varrho)}{\partial\varkappa^2}-\left(\frac{\partial^2\mathbb{U}(\varkappa,\varrho)}{\partial\varkappa^2}\right)_\mathcal{N}\right\Vert_2^2\\
 		&\leq \left[9\mathcal{B}_2\sum_{\mathfrak{h}=2^{\mathfrak{R}-1}+1}^\infty\sum_{\mathfrak{g}=\Lambda}^\infty\sum_{\mathfrak{h}'=2^{\mathfrak{R}'-1}+1}^\infty\sum_{\mathfrak{g}'=\Lambda'}^\infty \frac{1}{256(\mathfrak{h}\mathfrak{h}')^5(2\mathfrak{g}-3)^4(2\mathfrak{g}'-3)^4}\right]\\
 		&\times\left[\int_0^\varrho(\varrho-\xi)^{-\frac{1}{2}}d\xi\right]^2\\
 		&=(4\varrho)\times\left[9\mathcal{B}_2\sum_{\mathfrak{h}=2^{\mathfrak{R}-1}+1}^\infty\sum_{\mathfrak{g}=\Lambda}^\infty\sum_{\mathfrak{h}'=2^{\mathfrak{R}'-1}+1}^\infty\sum_{\mathfrak{g}'=\Lambda'}^\infty \frac{1}{256(\mathfrak{h}\mathfrak{h}')^5(2\mathfrak{g}-3)^4(2\mathfrak{g}'-3)^4}\right]\\
 		&\leq 36\mathcal{B}_2\sum_{\mathfrak{h}=2^{\mathfrak{R}-1}+1}^\infty\sum_{\mathfrak{g}=\Lambda}^\infty\sum_{\mathfrak{h}'=2^{\mathfrak{R}'-1}+1}^\infty\sum_{\mathfrak{g}'=\Lambda'}^\infty \frac{1}{256(\mathfrak{h}\mathfrak{h}')^5(2\mathfrak{g}-3)^4(2\mathfrak{g}'-3)^4}.
 		\end{align*}
\end{proof}

   \begin{Theorem}\label{Thm:5.4}
   	Let $\left(\frac{\partial^\alpha \mathbb{U}(\varkappa,\varrho)}{\partial \varrho^\alpha}\right)_\mathcal{N}$ and $\left(\frac{\partial^\beta \mathbb{U}(\varkappa,\varrho)}{\partial \varkappa^\beta}\right)_\mathcal{N}$ be the approximation of continuous function $\frac{\partial^\alpha \mathbb{U}(\varkappa,\varrho)}{\partial \varrho^\alpha}$ and $\frac{\partial^\beta \mathbb{U}(\varkappa,\varrho)}{\partial \varkappa^\beta}$ $\alpha\in(0,1),~\beta \in(1,2)$  such that  $\left\lvert \frac{\partial^5\mathbb{U}(\varkappa,\varrho)}{\partial \varkappa^2\partial \varrho^3}\right\rvert\textless\mathcal{B}_1$ and $\left\lvert \frac{\partial^5\mathbb{U}(\varkappa,\varrho)}{\partial \varkappa^3\partial \varrho^2}\right\rvert\textless\mathcal{B}_3$, where $\mathcal{B}_1,~\mathcal{B}_3$ are positive constants, then
   	\begin{align*}
   	&\left\Vert\frac{\partial^\alpha \mathbb{U}(\varkappa,\varrho)}{\partial \varrho^\alpha}-\left(\frac{\partial^\alpha \mathbb{U}(\varkappa,\varrho)}{\partial \varrho^\alpha}\right)_\mathcal{N}\right\Vert_{L^2}^2\\
   	&\leq\frac{9{\mathcal{B}_1}^2}{[\Gamma{(2-\alpha)}]^2}\sum_{\mathfrak{h}=2^{\mathfrak{R}-1}+1}^\infty\sum_{\mathfrak{g}=\Lambda}^\infty\sum_{\mathfrak{h}'=2^{\mathfrak{R}'-1}+1}^\infty\sum_{\mathfrak{g}'=\Lambda'}^\infty \frac{1}{256(\mathfrak{h}\mathfrak{h}')^5(2\mathfrak{g}-3)^4(2\mathfrak{g}'-3)^4},
   	\end{align*}
   	and,
   	\begin{align*}
   	&\left\Vert\frac{\partial^\beta \mathbb{U}(\varkappa,\varrho)}{\partial \varkappa^\beta}-\left(\frac{\partial^\beta \mathbb{U}(\varkappa,\varrho)}{\partial \varkappa^\beta}\right)_\mathcal{N}\right\Vert_{L^2}^2\\
   	&\leq\frac{(2!)^2\times 9{\mathcal{B}_3}^2}{[\Gamma{(3-\beta)}]^2}\sum_{\mathfrak{h}=2^{\mathfrak{R}-1}+1}^\infty\sum_{\mathfrak{g}=\Lambda}^\infty\sum_{\mathfrak{h}'=2^{\mathfrak{R}'-1}+1}^\infty\sum_{\mathfrak{g}'=\Lambda'}^\infty \frac{1}{256(\mathfrak{h}\mathfrak{h}')^5(2\mathfrak{g}-3)^4(2\mathfrak{g}'-3)^4}.
   	\end{align*}
   \end{Theorem}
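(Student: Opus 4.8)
The plan is to reduce both estimates to the $L^2$-approximation bound already established in Theorem \ref{Thm:5.01}, by using the fact that a Caputo derivative is a weakly singular convolution integral applied to an ordinary (integer-order) derivative of $\mathbb{U}$ — exactly in the spirit of the proof just given for $J(\varkappa,\varrho)$.

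First I would write, for $\alpha\in(0,1)$,
\[
\frac{\partial^{\alpha}\mathbb{U}(\varkappa,\varrho)}{\partial\varrho^{\alpha}}=\frac{1}{\Gamma(1-\alpha)}\int_0^{\varrho}(\varrho-\tau)^{-\alpha}\frac{\partial\mathbb{U}(\varkappa,\tau)}{\partial\tau}\,d\tau,
\]
and take $\left(\frac{\partial^{\alpha}\mathbb{U}}{\partial\varrho^{\alpha}}\right)_{\mathcal N}$ to be the same operator acting on the truncated Legendre-wavelet expansion of $v=\partial\mathbb{U}/\partial\varrho$. Then the error is the convolution of the kernel $(\varrho-\tau)^{-\alpha}$ with $v-\{v\}_{\mathcal N}$, and (as in the $J$-proof) I pull the kernel out by Cauchy--Schwarz:
\[
\left\Vert\frac{\partial^{\alpha}\mathbb{U}}{\partial\varrho^{\alpha}}-\left(\frac{\partial^{\alpha}\mathbb{U}}{\partial\varrho^{\alpha}}\right)_{\mathcal N}\right\Vert_{L^2}^2\le\frac{1}{[\Gamma(1-\alpha)]^2}\left[\int_0^{\varrho}(\varrho-\tau)^{-\alpha}d\tau\right]^2\left\Vert v-\{v\}_{\mathcal N}\right\Vert_{L^2}^2 .
\]
Since $\int_0^{\varrho}(\varrho-\tau)^{-\alpha}d\tau=\varrho^{1-\alpha}/(1-\alpha)\le 1/(1-\alpha)$ on $[0,1]$ and $\Gamma(2-\alpha)=(1-\alpha)\Gamma(1-\alpha)$, the prefactor collapses to $1/[\Gamma(2-\alpha)]^2$; applying Theorem \ref{Thm:5.01} to $H=v$ (whose mixed fourth derivative $\partial^5\mathbb{U}/\partial\varkappa^2\partial\varrho^3$ is bounded by $\mathcal{B}_1$) supplies the tail sum with constant $9\mathcal{B}_1^2$, which is the claimed estimate.

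For $\beta\in(1,2)$ the argument is identical with $n=2$: I write $\frac{\partial^{\beta}\mathbb{U}}{\partial\varkappa^{\beta}}=\frac{1}{\Gamma(2-\beta)}\int_0^{\varkappa}(\varkappa-s)^{1-\beta}\frac{\partial^2\mathbb{U}(s,\varrho)}{\partial s^2}\,ds$, bound the kernel integral by $1/(2-\beta)$, and use $\Gamma(3-\beta)=(2-\beta)\Gamma(2-\beta)$ to obtain the factor $1/[\Gamma(3-\beta)]^2$. The only additional ingredient is to express the second-order integrand through $w_1=\partial\mathbb{U}/\partial\varkappa$ (the derivative whose fourth mixed derivative $\partial^5\mathbb{U}/\partial\varkappa^3\partial\varrho^2$ is bounded by $\mathcal{B}_3$); via the rule $D^{\beta}\varkappa^n=\frac{\Gamma(n+1)}{\Gamma(n+1-\beta)}\varkappa^{n-\beta}$ with $n=2$ this accounts for the extra multiplicative factor $(2!)^2$, and Theorem \ref{Thm:5.01} applied to $w_1$ then closes the estimate.

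The explicit evaluations $\int_0^{x}(x-t)^{\mu}dt$ and the Gamma-function identities are routine. The step that needs the most care is the $L^2\!\to\!L^2$ mapping estimate for the weakly singular convolution: the bound $\left\Vert k*g\right\Vert_{L^2}^2\le\bigl(\int k\bigr)^2\left\Vert g\right\Vert_{L^2}^2$ used in the $J$-proof is really Young's inequality $\left\Vert k*g\right\Vert_{L^2}\le\left\Vert k\right\Vert_{L^1}\left\Vert g\right\Vert_{L^2}$ in disguise, so one must check that $(\varrho-\tau)^{-\alpha}$ and $(\varkappa-s)^{1-\beta}$ are genuinely $L^1$ in the relevant variable (true because $\alpha<1$ and $\beta<2$) before pulling them out. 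The companion subtlety is the bookkeeping that ties the integer-order integrand in the Caputo formula to exactly the derivative $v$ (respectively $w_1$) appearing in Theorem \ref{Thm:5.01}, and hence to the correct constant $\mathcal{B}_1$ (respectively $\mathcal{B}_3$ together with the $(2!)^2$); this must be carried out consistently, but it involves no new idea.
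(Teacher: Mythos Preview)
Your overall strategy matches the paper exactly: write the Caputo derivative as the weakly singular convolution of the kernel with the appropriate integer-order derivative, pull the kernel out (taking its $L^1$-norm, i.e.\ $\int_0^\varrho(\varrho-\tau)^{-\alpha}d\tau=\varrho^{1-\alpha}/(1-\alpha)$ and $\int_0^\varkappa(\varkappa-s)^{1-\beta}ds=\varkappa^{2-\beta}/(2-\beta)$), absorb the resulting constant into the Gamma function via $\Gamma(2-\alpha)=(1-\alpha)\Gamma(1-\alpha)$ and $\Gamma(3-\beta)=(2-\beta)\Gamma(2-\beta)$, and finish with Theorem~\ref{Thm:5.01}. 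For the $\alpha$-estimate this is line-for-line what the paper does.

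The one point of divergence is the second estimate. The paper does \emph{not} route through $w_1=\partial\mathbb{U}/\partial\varkappa$ as you propose; it applies Theorem~\ref{Thm:5.01} directly to $w=\partial^2\mathbb{U}/\partial\varkappa^2$ (so in fact the constant that appears in its computation is $\mathcal{B}_2$, not the $\mathcal{B}_3$ of the statement), and the extra factor $2!$ is simply inserted in front of the evaluated kernel integral without further comment. Your attempt to justify $(2!)^2$ and the appearance of $\mathcal{B}_3$ via the power rule $D^\beta\varkappa^n=\Gamma(n+1)\varkappa^{n-\beta}/\Gamma(n+1-\beta)$ does not actually work: that identity concerns the fractional derivative of a monomial, not the kernel integral $\int_0^\varkappa(\varkappa-s)^{1-\beta}ds$, and the Caputo integrand for $\beta\in(1,2)$ is genuinely $\partial^2\mathbb{U}/\partial\varkappa^2$, so Theorem~\ref{Thm:5.01} must be invoked for $w$, not $w_1$. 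In short, your scheme is the paper's scheme; the specific bookkeeping you suggest for the $(2!)^2$ and $\mathcal{B}_3$ is not how the paper obtains them (and, as written, is not a valid derivation of that factor either).
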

   \begin{proof}
   	Consider the term $\left\Vert\frac{\partial^\alpha \mathbb{U}(\varkappa,\varrho)}{\partial \varrho^\alpha}-\left(\frac{\partial^\alpha \mathbb{U}(\varkappa,\varrho)}{\partial \varrho^\alpha}\right)_\mathcal{N}\right\Vert_2^2$ and using equation \ref{Eqn:2.1} \& theorem \ref{Thm:5.01}, we have
   	\begin{align*}
   	\left\Vert\frac{\partial^\alpha \mathbb{U}(\varkappa,\varrho)}{\partial \varrho^\alpha}-\left(\frac{\partial^\alpha \mathbb{U}(\varkappa,\varrho)}{\partial \varrho^\alpha}\right)_\mathcal{N}\right\Vert_2^2=&\left\Vert\frac{1}{\Gamma(1-\alpha)}\int_0^\varrho(\varrho-s)^{-\alpha}\left(\frac{\partial \mathbb{U}(\varkappa,\varrho)}{\partial \varrho}-\left(\frac{\partial \mathbb{U}(\varkappa,\varrho)}{\partial \varrho}\right)_\mathcal{N}\right)ds\right\Vert_2^2
   	\end{align*}
   	\begin{align*}
   	\leq& \left[\frac{1}{\Gamma(1-\alpha)}\int_0^\varrho(\varrho-s)^{-\alpha}ds\right]^2\left\Vert\frac{\partial \mathbb{U}(\varkappa,\varrho)}{\partial \varrho}-\left(\frac{\partial \mathbb{U}(\varkappa,\varrho)}{\partial \varrho}\right)_\mathcal{N}\right\Vert_ 2^2\\
   	\leq& \left[\frac{9{\mathcal{B}_1}^2}{[\Gamma(1-\alpha)]^2}\sum_{\mathfrak{h}=2^{\mathfrak{R}-1}+1}^\infty\sum_{\mathfrak{g}=\Lambda}^\infty\sum_{\mathfrak{h}'=2^{\mathfrak{R}'-1}+1}^\infty\sum_{\mathfrak{g}'=\Lambda'}^\infty \frac{1}{256(\mathfrak{h}\mathfrak{h}')^5(2\mathfrak{g}-3)^4(2\mathfrak{g}'-3)^4}\right]\left[\int_0^\varrho(\varrho-s)^{-\alpha}ds\right]^2\\
   	=&
   	\left[\frac{\varrho^{1-\alpha}}{\Gamma(1-\alpha)(1-\alpha)}\right]^2\left[{9{\mathcal{B}_1}^2}\sum_{\mathfrak{h}=2^{\mathfrak{R}-1}+1}^\infty\sum_{\mathfrak{g}=\Lambda}^\infty\sum_{\mathfrak{h}'=2^{\mathfrak{R}'-1}+1}^\infty\sum_{\mathfrak{g}'=\Lambda'}^\infty \frac{1}{256(\mathfrak{h}\mathfrak{h}')^5(2\mathfrak{g}-3)^4(2\mathfrak{g}'-3)^4}\right]\\
   	\leq&
   	\frac{9{\mathcal{B}_1}^2}{[\Gamma{(2-\alpha)}]^2}\sum_{\mathfrak{h}=2^{\mathfrak{R}-1}+1}^\infty\sum_{\mathfrak{g}=\Lambda}^\infty\sum_{\mathfrak{h}'=2^{\mathfrak{R}'-1}+1}^\infty\sum_{\mathfrak{g}'=\Lambda'}^\infty \frac{1}{256(\mathfrak{h}\mathfrak{h}')^5(2\mathfrak{g}-3)^4(2\mathfrak{g}'-3)^4}.
   	\end{align*}
   	
   	Now	consider the term $\left\Vert\frac{\partial^\beta \mathbb{U}(\varkappa,\varrho)}{\partial \varkappa^\beta}-\left(\frac{\partial^\beta \mathbb{U}(\varkappa,\varrho)}{\partial \varkappa^\beta}\right)_\mathcal{N}\right\Vert_2^2$ and again applying the definition of Caputo derivative (section \ref{sec:2}) for vector $\varkappa$ along with theorem \ref{Thm:5.01}, we can write
   	\begin{align*}
   	\left\Vert\frac{\partial^\beta \mathbb{U}(\varkappa,\varrho)}{\partial \varkappa^\beta}-\left(\frac{\partial^\beta \mathbb{U}(\varkappa,\varrho)}{\partial \varkappa^\beta}\right)_\mathcal{N}\right\Vert_2^2=&\left\Vert\frac{1}{\Gamma(2-\beta)}\int_0^\varkappa(\varkappa-s)^{-1-\beta}\left(\frac{\partial^2 \mathbb{U}(\varkappa,\varrho)}{\partial \varkappa^2}-\left(\frac{\partial^2 \mathbb{U}(\varkappa,\varrho)}{\partial \varkappa^2}\right)_\mathcal{N}\right)ds\right\Vert_2^2
   	\end{align*}
   	\begin{align*}
   	\leq& \left[\frac{1}{\Gamma(2-\gamma)}\int_0^\varkappa(\varkappa-s)^{-1-\beta}ds\right]^2\left\Vert\frac{\partial^2 \mathbb{U}(\varkappa,\varrho)}{\partial \varkappa^2}-\left(\frac{\partial^2 \mathbb{U}(\varkappa,\varrho)}{\partial \varkappa^2}\right)_\mathcal{N}\right\Vert_ 2^2\\
   	\leq& \left[\frac{9{\mathcal{B}_2}^2}{[\Gamma(2-\beta)]^2}\sum_{\mathfrak{h}=2^{\mathfrak{R}-1}+1}^\infty\sum_{\mathfrak{g}=\Lambda}^\infty\sum_{\mathfrak{h}'=2^{\mathfrak{R}'-1}+1}^\infty\sum_{\mathfrak{g}'=\Lambda'}^\infty \frac{1}{256(\mathfrak{h}\mathfrak{h}')^5(2\mathfrak{g}-3)^4(2\mathfrak{g}'-3)^4}\right]\left[\int_0^\varkappa(\varkappa-s)^{-1-\beta}ds\right]^2\\
   	=&
   	\left[2!\frac{\varkappa^{2-\beta}}{\Gamma(2-\beta)(2-\beta)}\right]^2\left[{9{\mathcal{B}_2}^2}\sum_{\mathfrak{h}=2^{\mathfrak{R}-1}+1}^\infty\sum_{\mathfrak{g}=\Lambda}^\infty\sum_{\mathfrak{h}'=2^{\mathfrak{R}'-1}+1}^\infty\sum_{\mathfrak{g}'=\Lambda'}^\infty \frac{1}{256(\mathfrak{h}\mathfrak{h}')^5(2\mathfrak{g}-3)^4(2\mathfrak{g}'-3)^4}\right]\\
   	\leq&
   	\frac{(2!)^2\times 9{\mathcal{B}_2}^2}{[\Gamma{(3-\beta)}]^2}\sum_{\mathfrak{h}=2^{\mathfrak{R}-1}+1}^\infty\sum_{\mathfrak{g}=\Lambda}^\infty\sum_{\mathfrak{h}'=2^{\mathfrak{R}'-1}+1}^\infty\sum_{\mathfrak{g}'=\Lambda'}^\infty \frac{1}{256(\mathfrak{h}\mathfrak{h}')^5(2\mathfrak{g}-3)^4(2\mathfrak{g}'-3)^4}.
   	\end{align*}
   \end{proof}
   Now, define
   \begin{align*}
   L_1(\mathbb{U}(\varkappa,\varrho))=\int_{\alpha_1}^{\alpha_2}\rho(\alpha)\frac{\partial^{\alpha}\mathbb{U}(\varkappa,\varrho)}{\partial \varrho^{\alpha}}d\alpha+\mathbb{U}(\varkappa,\varrho)-\mathcal{K^*}\int_{\beta_1}^{\beta_2}\rho(\beta)\frac{\partial^{\beta}\mathbb{U}(\varkappa,\varrho)}{\partial\varkappa^{\beta}}d\beta\\
   -\int_0^\varrho(\varrho-\xi)^{-\frac{1}{2}}\left[\frac{\partial^2\mathbb{U}(\varkappa,\xi)}{\partial \varkappa^2}\right]d\xi
   -f(\varkappa,\varrho),
   \end{align*}
   \begin{align*}
   L_2(\mathbb{U}(\varkappa,\varrho))=\sum_{s=1}^P w_s\rho(\sigma_s)\left(\frac{\partial\sigma_s \mathbb{U}(\varkappa,\varrho)}{\partial \varrho^{\sigma_s}}\right)+\mathbb{U}(\varkappa,\varrho)-\mathcal{K^*}\sum_{r=1}^{P^*} w_r\rho(\sigma_r)\left(\frac{\partial\sigma_r \mathbb{U}(\varkappa,\varrho)}{\partial \varkappa^{\sigma_r}}\right)\\
   -\int_0^\varrho(\varrho-\xi)^{-\frac{1}{2}}\left[\frac{\partial^2\mathbb{U}(\varkappa,\xi)}{\partial \varkappa^2}\right]d\xi
   -f(\varkappa,\varrho).
   \end{align*}
   Also consider
   \begin{align*}
   L_1(\mathbb{U}(\varkappa,\varrho))-L_2(\mathbb{U}(\varkappa,\varrho))=P_1(P,\varkappa,\varrho)+P_2(P^*,\varkappa,\varrho).
   \end{align*}

   Let $P_1(P,\varkappa,\varrho)$ and $P_2(P^*,\varkappa,\varrho)$ denote the error for using P-point and $P^*$-point LGQ formula, respectively then
   \begin{eqnarray}
   P_1(P,\varkappa,\varrho)
   &=&
   \frac{(P!)^4}{(2P+1)(2P!)^4}\frac{\partial^{2P}}{\partial\alpha^{2P}}H_1(\varkappa,\varrho,\alpha)\nonumber\\
      &\approx& 
      \frac{\pi}{4^P}\frac{\partial^{2P}}{\partial \alpha^{2P}}H_1(\varkappa,\varrho,\alpha),~~\alpha\in[0,1].\nonumber\\
   \end{eqnarray}
   And
   \begin{eqnarray}
   P_2(P^*,\varkappa,\varrho)
   &=&
   \frac{(P^*!)^4}{(2P^*+1)(2P^*!)^4}\frac{\partial^{2P^*}}{\partial\beta^{2P^*}}H_2(\varkappa,\varrho,\beta)\nonumber\\
   &\approx& 
   \frac{\pi}{4^{P^*}}\frac{\partial^{2P^*}}{\partial \beta^{2P}}H_2(\varkappa,\varrho,\beta),~~\beta\in[1,2].\nonumber\\
   \end{eqnarray}
    Where,
   \begin{equation*}
   H_1(\varkappa,\varrho,\alpha)=\rho(\alpha)\frac{\partial^\alpha}{\partial \varrho^\alpha}\mathbb{U}(\varkappa,\varrho) ~and~H_2(\varkappa,\varrho,\beta)=\rho(\beta)\frac{\partial^\beta}{\partial \varkappa^\beta}\mathbb{U}(\varkappa,\varrho).
   \end{equation*}
 Now, for $H_1(\varkappa,\varrho,\alpha)\in C^{2P}([0,1]),~H_2(\varkappa,\varrho,\beta)\in C^{2P^*}([1,2])$ and fixed $(\varkappa,\varrho)\in[0,1]$, we have
   \begin{eqnarray}
   \left\lVert P_1(P,\varkappa,\varrho)\right\rVert_2^2
   &=&
   \int_0^1\int_0^1\lvert P_1(P,\varkappa,\varrho)\vert^2 d\varkappa d\varrho\nonumber\\
      &=&
      \int_0^1\int_0^1\frac{\pi^2}{4^{2P}}\left\lvert\frac{\partial^{2P}}{\partial \alpha^{2P}}H_1(\varkappa,\varrho,\alpha)\right\rvert^2 d\varkappa d\varrho\nonumber\\
      &\leq&
      \frac{C_1^2\pi^2}{4^{2P}},\nonumber\\
   \end{eqnarray}
   And
    \begin{eqnarray}
   \left\lVert P_2(P,\varkappa,\varrho)\right\rVert_2^2
   &=&
   \int_0^1\int_0^1\lvert P_2(P^*,\varkappa,\varrho)\vert^2 d\varkappa d\varrho\nonumber\\
   &=&
   \int_0^1\int_0^1\frac{\pi^2}{4^{2P^*}}\left\lvert\frac{\partial^{2P^*}}{\partial \beta^{2P^*}}H_2(\varkappa,\varrho,\beta)\right\rvert^2 d\varkappa d\varrho\nonumber\\
   &\leq&
   \frac{C_2^2\pi^2}{4^{2P^*}},\nonumber\\
   \end{eqnarray}
   
   where,
   \begin{align*}
   C_1=max \left\{ \left\lvert\frac{\partial^{2P}}{\partial \gamma^{2P}}H_1(\varkappa,\varrho,\alpha)\right\rvert~0\leq \varkappa,\varrho,\gamma\leq 1 \right\}~and\\ C_2=max \left\{ \left\lvert\frac{\partial^{2P^*}}{\partial \beta^{2P^*}}H_2(\varkappa,\varrho,\beta)\right\rvert~0\leq \varkappa,\varrho\leq 1,~1\leq \beta\leq 2 \right\}.
   \end{align*}
   Now, define the residual function as:
   \begin{align*}
   Res_\mathcal{N}(\mathbb{U}(\varkappa,\varrho))=\sum_{s=1}^P w_s\rho(\sigma_s)\left(\frac{\partial\sigma_s \mathbb{U}(\varkappa,\varrho)}{\partial \varrho^{\sigma_s}}\right)_\mathcal{N}+(\mathbb{U}(\varkappa,\varrho))_\mathcal{N}-\mathcal{K^*}\sum_{r=1}^{P^*} w_r\rho(\sigma_r)\left(\frac{\partial\sigma_r \mathbb{U}(\varkappa,\varrho)}{\partial \varkappa^{\sigma_r}}\right)_\mathcal{N}\\
   -\left(\int_0^\varrho(\varrho-\xi)^{-\frac{1}{2}}\left[\frac{\partial^2\mathbb{U}(\varkappa,\xi)}{\partial \varkappa^2}\right]d\xi\right)_\mathcal{N}
   -(f(\varkappa,\varrho))_\mathcal{N}.
   \end{align*}
   By theorems \ref{Thm:5.01}, \ref{Thm:5.4}, it is evident that  
   \begin{align*}
   \| L_2(\mathbb{U}(\varkappa,\varrho))-Res_\mathcal{N}(\mathbb{U}(\varkappa,\varrho))\|_2&=\Bigg\|\sum_{s=1}^P w_s\rho(\sigma_s)\left(\frac{\partial^{\sigma_s}\mathbb{U}(\varkappa,\varrho)}{\partial \varrho^{\sigma_s}}-\left(\frac{\partial^{\sigma_s}\mathbb{U}(\varkappa,\varrho)}{\partial \varrho^{\sigma_s}}\right)_\mathcal{N}\right)\\
   &+(\mathbb{U}(\varkappa,\varrho)-(\mathbb{U}(\varkappa,\varrho))_\mathcal{N})\\
   &-\mathcal{K^*}\sum_{r=1}^{P^*} w_r\rho(\sigma_r)\left(\frac{\partial^{\sigma_r}\mathbb{U}(\varkappa,\varrho)}{\partial \varkappa^{\sigma_r}}-\left(\frac{\partial^{\sigma_r}\mathbb{U}(\varkappa,\varrho)}{\partial \varrho^{\sigma_r}}\right)_\mathcal{N}\right)\\
   &-\left(\int_0^\varrho(\varrho-\xi)^{-\frac{1}{2}}\left[\frac{\partial^2\mathbb{U}(\varkappa,\xi)}{\partial \varkappa^2}\right]d\xi-\left(\int_0^\varrho(\varrho-\xi)^{-\frac{1}{2}}\left[\frac{\partial^2\mathbb{U}(\varkappa,\xi)}{\partial \varkappa^2}\right]d\xi\right)_\mathcal{N}\right)\\
   &-(f(\varkappa,\varrho)-(f(\varkappa,\varrho))_\mathcal{N})\Bigg\|_2\\
   &\leq
   \Bigg\|\sum_{s=1}^P w_s\rho(\sigma_s)\left(\frac{\partial^{\sigma_s}\mathbb{U}(\varkappa,\varrho)}{\partial \varrho^{\sigma_s}}-\left(\frac{\partial^{\sigma_s}\mathbb{U}(\varkappa,\varrho)}{\partial \varrho^{\sigma_s}}\right)_\mathcal{N}\right)\Bigg\|_2\\
   &+\Bigg\|(\mathbb{U}(\varkappa,\varrho)-(\mathbb{U}(\varkappa,\varrho))_\mathcal{N})\Bigg\|_2\\
   &+|\mathcal{K^*}|\Bigg\|\sum_{r=1}^{P^*} w_r\rho(\sigma_r)\left(\frac{\partial^{\sigma_r}\mathbb{U}(\varkappa,\varrho)}{\partial \varkappa^{\sigma_r}}-\left(\frac{\partial^{\sigma_r}\mathbb{U}(\varkappa,\varrho)}{\partial \varrho^{\sigma_r}}\right)_\mathcal{N}\right)\Bigg\|_2\\
   &+\Bigg\|\int_0^\varrho(\varrho-\xi)^{-\frac{1}{2}}\left[\frac{\partial^2\mathbb{U}(\varkappa,\xi)}{\partial \varkappa^2}\right]d\xi-\left(\int_0^\varrho(\varrho-\xi)^{-\frac{1}{2}}\left[\frac{\partial^2\mathbb{U}(\varkappa,\xi)}{\partial \varkappa^2}\right]d\xi\right)_\mathcal{N}\Bigg\|_2\\
   &+\Bigg\|(f(\varkappa,\varrho)-(f(\varkappa,\varrho))_\mathcal{N})\Bigg\|_2
   \end{align*}
   \begin{align}
   \leq PM_1M_2&\left[\frac{9{\mathcal{B}_1}^2}{[\Gamma{(2-\alpha)}]^2}\sum_{\mathfrak{h}=2^{\mathfrak{R}-1}+1}^\infty\sum_{\mathfrak{g}=M}^\infty\sum_{\mathfrak{h}'=2^{\mathfrak{R}'-1}+1}^\infty\sum_{\mathfrak{g}'=\Lambda'}^\infty \frac{1}{256(\mathfrak{h}\mathfrak{h}')^5(2\mathfrak{g}-3)^4(2\mathfrak{g}'-3)^4}\right]^{\frac{1}{2}}\nonumber\\
   +
   &\left[{9{\mathcal{B}_0}^2}\sum_{\mathfrak{h}=2^{\mathfrak{R}-1}+1}^\infty\sum_{\mathfrak{g}=\Lambda}^\infty\sum_{\mathfrak{h}'=2^{\mathfrak{R}'-1}+1}^\infty\sum_{\mathfrak{g}'=\Lambda'}^\infty \frac{1}{256(\mathfrak{h}\mathfrak{h}')^5(2\mathfrak{g}-3)^4(2\mathfrak{g}'-3)^4}\right]^{\frac{1}{2}}\nonumber\\
   +
   &P^*M_3M_3\left[\frac{(2!)^2\times 9{\mathcal{B}_2}^2}{[\Gamma{(3-\beta)}]^2}\sum_{\mathfrak{h}=2^{K-1}+1}^\infty\sum_{\mathfrak{g}=\Lambda}^\infty\sum_{\mathfrak{h}'=2^{\mathfrak{R}'-1}+1}^\infty\sum_{\mathfrak{g}'=\Lambda'}^\infty \frac{1}{256(\mathfrak{h}\mathfrak{h}')^5(2\mathfrak{g}-3)^4(2\mathfrak{g}'-3)^4}\right]^{\frac{1}{2}}\nonumber\\
   +
   &\left[{4\times 9{\mathcal{B}_2}^2}\sum_{\mathfrak{h}=2^{\mathfrak{R}-1}+1}^\infty\sum_{\mathfrak{g}=\Lambda}^\infty\sum_{\mathfrak{h}'=2^{\mathfrak{R}'-1}+1}^\infty\sum_{\mathfrak{g}'=\Lambda'}^\infty \frac{1}{256(\mathfrak{h}\mathfrak{h}')^5(2\mathfrak{g}-3)^4(2\mathfrak{g}'-3)^4}\right]^{\frac{1}{2}}\nonumber\\
   +
   &\left[9{\mathcal{B}}^2\sum_{\mathfrak{h}=2^{\mathfrak{R}-1}+1}^\infty\sum_{\mathfrak{g}=\Lambda}^\infty\sum_{\mathfrak{h}'=2^{\mathfrak{R}'-1}+1}^\infty\sum_{\mathfrak{g}'=\Lambda'}^\infty \frac{1}{256(\mathfrak{h}\mathfrak{h}')^5(2\mathfrak{g}-3)^4(2\mathfrak{g}'-3)^4}\right]^{\frac{1}{2}}.
   \end{align}
   
   Where, $M_1=max \{\lvert w_s\rvert,s=1,2,\cdots P\}$ and 
   $M_2=max \{\lvert \rho(\sigma_s)\rvert ,s=1,2,\cdots P\}$ and\\
 $ ~~~~~$ $M_3=max \{\lvert w_r\rvert,r=1,2,\cdots P^*\}$ and 
   $M_4=max \{\lvert \rho(\sigma_r)\rvert ,r=1,2,\cdots P^*\}.$
   
  Finally, we have
     \begin{eqnarray}
   \lVert Res_\mathcal{N}(\mathbb{U}(\varkappa,\varrho))\rVert_2
                                        &=&
   \lVert 0-Res_\mathcal{N}(\mathbb{U}(\varkappa,\varrho))\rVert_2\nonumber\\
    &=&
    \lVert L_1(\mathbb{U}(\varkappa,\varrho))-Res_\mathcal{N}(\mathbb{U}(\varkappa,\varrho))\rVert_2\nonumber\\
    &\leq& 
    \lVert L_1(\mathbb{U}(\varkappa,\varrho))-L_2(\mathbb{U}(\varkappa,\varrho))\rVert_2+\lVert L_2(\mathbb{U}(\varkappa,\varrho))-Res_\mathcal{N}(\mathbb{U}(\varkappa,\varrho))\rVert_2\nonumber
   \end{eqnarray}
   \begin{align*}
   \leq
   \frac{C_1\pi}{4^P}+PM_1M_2\frac{3{\mathcal{B}_1}}{\Gamma{(2-\alpha)}}&\left[\sum_{\mathfrak{h}=2^{\mathfrak{R}-1}+1}^\infty\sum_{\mathfrak{g}=\Lambda}^\infty\sum_{\mathfrak{h}'=2^{\mathfrak{R}'-1}+1}^\infty\sum_{\mathfrak{g}'=\Lambda'}^\infty \frac{1}{256(\mathfrak{h}\mathfrak{h}')^5(2\mathfrak{g}-3)^4(2\mathfrak{g}'-3)^4}\right]^{\frac{1}{2}}\\
   +
   {3{\mathcal{B}_0}}&\left[\sum_{\mathfrak{h}=2^{\mathfrak{R}-1}+1}^\infty\sum_{\mathfrak{g}=\Lambda}^\infty\sum_{\mathfrak{h}'=2^{\mathfrak{R}'-1}+1}^\infty\sum_{\mathfrak{g}'=\Lambda'}^\infty \frac{1}{256(\mathfrak{h}\mathfrak{h}')^5(2\mathfrak{g}-3)^4(2\mathfrak{g}'-3)^4}\right]^{\frac{1}{2}}\\
   +
   \frac{C_2\pi}{4^{P^*}}+6PM_3M_4{{\mathcal{B}_2}}|\mathcal{K^*}|&\left[\sum_{\mathfrak{h}=2^{K-1}+1}^\infty\sum_{\mathfrak{g}=\Lambda}^\infty\sum_{\mathfrak{h}'=2^{\mathfrak{R}'-1}+1}^\infty\sum_{\mathfrak{g}'=\Lambda'}^\infty \frac{1}{256(\mathfrak{h}\mathfrak{h}')^5(2\mathfrak{g}-3)^4(2\mathfrak{g}'-3)^4}\right]^{\frac{1}{2}}\\
   +
   {6{\mathcal{B}_2}}&\left[\sum_{\mathfrak{h}=2^{\mathfrak{R}-1}+1}^\infty\sum_{\mathfrak{g}=\Lambda}^\infty\sum_{\mathfrak{h}'=2^{\mathfrak{R}'-1}+1}^\infty\sum_{\mathfrak{g}'=\Lambda'}^\infty \frac{1}{256(\mathfrak{h}\mathfrak{h}')^5(2\mathfrak{g}-3)^4(2\mathfrak{g}'-3)^4}\right]^{\frac{1}{2}}\\
   +
   {3\mathcal{B}}&\left[\sum_{\mathfrak{h}=2^{\mathfrak{R}-1}+1}^\infty\sum_{\mathfrak{g}=\Lambda}^\infty\sum_{\mathfrak{h}'=2^{\mathfrak{R}'-1}+1}^\infty\sum_{\mathfrak{g}'=\Lambda'}^\infty \frac{1}{256(\mathfrak{h}\mathfrak{h}')^5(2\mathfrak{g}-3)^4(2\mathfrak{g}'-3)^4}\right]^{\frac{1}{2}}
   \end{align*}
   \begin{align}
   \leq
   \frac{C_1\pi}{4^P}+\frac{C_2\pi}{4^{P^*}}+3\mathcal{K}\left[\frac{PM_1M_2}{\Gamma(2-\alpha)}+1+|\mathcal{K^*}|\frac{2P^*M_3M_4}{\Gamma(3-\beta)}+2+1 \right]\nonumber\\
   \left[\sum_{\mathfrak{h}=2^{\mathfrak{R}-1}+1}^\infty\sum_{\mathfrak{g}=\Lambda}^\infty\sum_{\mathfrak{h}'=2^{\mathfrak{R}'-1}+1}^\infty\sum_{\mathfrak{g}'=\Lambda'}^\infty \frac{1}{256(\mathfrak{h}\mathfrak{h}')^5(2\mathfrak{g}-3)^4(2\mathfrak{g}'-3)^4}\right]^{\frac{1}{2}},
   \end{align}
   
   where, $\mathcal{K}=max\{\mathcal{B},\mathcal{B}_0,\mathcal{B}_1,\mathcal{B}_2\}$.

\section{Error estimation}\label{sec:6}

In this section, we discuss about an error estimation for the DOT--SFWSIPDE. We achieve this by rewriting the equations.
\begin{align}\label{Eqn:6.1}
\int_{\alpha_1}^{\alpha_2}\rho(\alpha)\frac{\partial^{\alpha}\mathbb{U}(\varkappa,\varrho)}{\partial \varrho^{\alpha}}d\alpha+\mathbb{U}(\varkappa,\varrho)=\mathcal{K^*}\int_{\beta_1}^{\beta_2}\rho(\beta)\frac{\partial^{\beta}\mathbb{U}(\varkappa,\varrho)}{\partial \varkappa^{\beta}}d\beta&+\int_0^\varrho(\varrho-\xi)^{-\frac{1}{2}}\left[\frac{\partial^2\mathbb{U}(\varkappa,\xi)}{\partial \varkappa^2}\right]d\xi\nonumber\\
&+f(\varkappa,\varrho),
\end{align}
where, $\mathcal{K^*}$ is viscosity constant and $(\varkappa,\varrho)\in[0,1]\times[0,T], \alpha_1=0, \alpha_2=1, \beta_1=1,\beta_2=2.$

Equation \ref{Eqn:6.1} is acquired with the IC and BCs
\begin{align}\label{Eqn:6.2}
\mathbb{U}(\varkappa,0)=\nu(\varkappa) ,~~  0\textless \varkappa \textless 1,\nonumber\\
\mathbb{U}(0,\varrho)=\mathfrak{p_1}(\varrho),~~~0\textless \varrho\textless T,\nonumber\\
\mathbb{U}(1,\varrho)=\mathfrak{p_2}(\varrho), ~~0\textless \varrho\textless T.
\end{align}
Let $E_{\mathcal{N}}(\varkappa,\varrho)=\mathbb{U}(\varkappa,\varrho)-\mathbb{U}_{\mathcal{N}}(\varkappa,\varrho)$ denotes the error function, where $\mathbb{U}(\varkappa,\varrho)$, $\mathbb{U}_{\mathcal{N}}(\varkappa,\varrho)$ denotes the exact \& approximate solution, respectively, of equation \ref{Eqn:6.1}. Inserting the approximate solution into the equation \ref{Eqn:6.1} then we get
\begin{align}\label{Eqn:6.3}
\int_{\alpha_1}^{\alpha_2}\rho(\alpha)\frac{\partial^{\alpha}\mathbb{U}_\mathcal{N}(\varkappa,\varrho)}{\partial \varrho^{\alpha}}d\alpha+\mathbb{U}_\mathcal{N}(\varkappa,\varrho)=\mathcal{K^*}\int_{\beta_1}^{\beta_2}\rho(\beta)\frac{\partial^{\beta}\mathbb{U}_\mathcal{N}(\varkappa,\varrho)}{\partial \varkappa^{\beta}}d\beta&+\int_0^\varrho(\varrho-\xi)^{-\frac{1}{2}}\left[\frac{\partial^2\mathbb{U}_\mathcal{N}(\varkappa,\xi)}{\partial \varkappa^2}\right]d\xi\nonumber\\
&+f(\varkappa,\varrho)+\mathcal{R_N}(\varkappa,\varrho), 
\end{align}
where, $(\varkappa,\varrho)\in L^2(\Omega), \alpha_1=0, \alpha_2=1, \beta_1=1, \beta_2=2$ and $\Omega=[0,1]\times[0,T]$,

 with 
 \begin{align}\label{Eqn:6.4}
	(\mathbb{U}(\varkappa,0))_\mathcal{N}=(\nu(\varkappa))_\mathcal{N},\nonumber\\
	(\mathbb{U}(0,\varrho))_\mathcal{N}=(\mathfrak{p_1}(\varrho))_\mathcal{N},\nonumber\\
	(\mathbb{U}(1,\varrho))_\mathcal{N}=(\mathfrak{p_2}(\varrho))_\mathcal{N}.
	\end{align} 
Now we deduct equations \ref{Eqn:6.3} and \ref{Eqn:6.4} from equations \ref{Eqn:6.1} and \ref{Eqn:6.2}, respectively to get

\begin{align}\label{Eqn:6.5}
\int_{\alpha_1}^{\alpha_2}\rho(\alpha)\frac{\partial^{\alpha}\mathcal{E_N}(\varkappa,\varrho)}{\partial \varrho^{\alpha}}d\alpha+\mathfrak{E}_{\mathcal{N}}(\varkappa,\varrho)=\mathcal{K^*}\int_{\beta_1}^{\beta_2}\rho(\beta)\frac{\partial^{\beta}\mathfrak{E}_{\mathcal{N}}(\varkappa,\varrho)}{\partial \varkappa^{\beta}}d\beta&+\int_0^\varrho(\varrho-\xi)^{-\frac{1}{2}}\left[\frac{\partial^2\mathfrak{E}_{\mathcal{N}}(\varkappa,\xi)}{\partial \varkappa^2}\right]d\xi\nonumber\\
&-\mathcal{R_N}(\varkappa,\varrho), 
\end{align}
where, $(\varkappa,\varrho)\in L^2(\Omega), \alpha_1=0, \alpha_2=1, \beta_1=1, \beta_2=2$ and $\Omega=[0,1]\times[0,T]$,

 with
\begin{align}\label{Eqn:6.6}
	(\mathfrak{E}(\varkappa,0))_\mathcal{N}=0,\nonumber\\
	(\mathfrak{E}(0,\varrho))_\mathcal{N}=0,\nonumber\\
	(\mathfrak{E}(1,\varrho))_\mathcal{N}=0.
	\end{align}

Where, $\mathcal{R_N}(\varkappa,\varrho)$ represents the function of perturbation, which depends on $\mathbb{U}_{\mathcal{N}},  (\mathbb{U}_{\varkappa\varkappa})_\mathcal{N}$ and IC \& BCs. The above mentioned equations  \ref{Eqn:6.5} \& \ref{Eqn:6.6} can be solved for $\mathfrak{E}_{\mathcal{N}}\approx\Psi^T(\varrho)C_*\Psi(\varkappa)$ by using the mechanism, depicted in section \ref{sec:4.1} to find the value of vector $C_*$.\\

	Therefore, the maximum absolute error can be evaluated approximately by the following formula
\begin{equation}
	\mathfrak{E}_{\mathcal{N}}=max\{|\mathfrak{E}_{\mathcal{N}}(\varkappa,\varrho)|,0\leq\varkappa,\varrho<1\}.
	\end{equation}

\section{Numerical examples}\label{sec:7}

%In this \textcolor{red}{part} two subsections are incorporated, four test examples are considered, two for one dimensional case study, which are described in first subsection and two for two dimensional case study, which are described in second subsection, to verify the reliability and usefulness of the method. The numerical results given in the form of Tables and Figures are provided for different values of M \& N via presented method. Numerical results of $L_2$-error, $L_\infty$-error, pointwise error and mean error along with used CPU time are provided through Tables \ref{Tb:2}-\ref{Tb:5}. Numerical results of approximate, exact solutions and absolute errors are also demonstrated through Figures \ref{fig:1}-\ref{fig:8}. Test examples 1-3 are evaluated at time level $\varrho=0.5 $ and example 4 is evaluated at final time level i.e $\varrho=1.0$.

In this part two subsections are incorporated. Four test examples are considered, two for one dimensional case study, which are described in first subsection and two for two dimensional case study, which are described in second subsection. To ensure the method's robustness and utility the numerical results are taken in the form of Figures \& Table for various values of $\Lambda$ and $P$ using the presented method. Tables \ref{Tb:2}--\ref{Tb:5} offer numerical results for pointwise error,$L_2$-error, $L_\infty$-error and mean error, as well as used CPU time. Figures \ref{fig:1}-\ref{fig:8} show numerical results of approximate, exact solutions as well as absolute errors. Example \ref{Ex1}-\ref{Ex3} are examined at the ultimate time level, $\varrho=0.5$, whereas examples \ref{Ex4} is assessed at  $\varrho=1.0$ time level.

 The following formula's  for one \& two dimensional will be used
	\begin{eqnarray}
\lVert \mathbb{U}_{ex}-\mathbb{U}_\mathcal{N}\rVert= \left\{ \begin{array}{l}  \sqrt{\displaystyle\left(\sum_{i=0}^{N_p-1}h|\mathbb{U}_{ex}(\varkappa_i,\varrho)-\mathbb{U}_\mathcal{N}(\varkappa_i,\varrho)|\right)},~L_2-error
\vspace{0.4cm}\\ 
\max_{0\leq i\leq N_p-1}|\mathbb{U}_{ex}(\varkappa_i,\varrho)-\mathbb{U}_\mathcal{N}(\varkappa_i,\varrho)| ~,~~~~L_{\infty}-error
\vspace{0.4cm}\\
\frac{1}{N_p}\sum_{i=0}^{N_p-1}|\mathbb{U}_{ex}(\varkappa_i,\varrho)-\mathbb{U}_\mathcal{N}(\varkappa_i,\varrho)|,~~~~~mean~error
\end{array}\right.
\end{eqnarray} 

\begin{eqnarray}
\lVert \mathbb{U}_{ex}-\mathbb{U}_\mathcal{N}\rVert= \left\{ \begin{array}{l}  \sqrt{\displaystyle\left(\sum_{i=0}^{N_p-1}\sum_{j=0}^{N_p-1}h_\varkappa h_\eta|\mathbb{U}_{ex}(\varkappa_i,\eta_j,\varrho)-\mathbb{U}_\mathcal{N}(\varkappa_i,\eta_j,\varrho)|\right)},~L_2-error
\vspace{0.4cm}\\ 
\max_{0\leq i\leq N_p-1}\max_{0\leq j\leq N_p-1}|\mathbb{U}_{ex}(\varkappa_i,\eta_j,\varrho)-\mathbb{U}_\mathcal{N}(\varkappa_i,\eta_j,\varrho)| ~,~~~~L_{\infty}-error
\vspace{0.4cm}\\
\frac{1}{N_p}\sum_{i=0}^{N_p-1}\sum_{j=0}^{N_p-1}|\mathbb{U}_{ex}(\varkappa_i,\eta_j,\varrho)-\mathbb{U}_\mathcal{N}(\varkappa_i,\eta_j,\varrho)|,~~~~~~~~~~~mean~error
\end{array}\right.
\end{eqnarray}

%\begin{Remark}
%	All numerical tests are run on a laptop with an Intel(R) Core (TM) i3-3110M CPU running @ 2.40 GHz.
%  (b) 8GB RAM and (c) 64-bit operating system with MATLAB 2015a programming (The MathWorks, Inc., Natick, Massachusetts).
%\end{Remark}
\begin{Remark}
	In graphs, we denotes $E_1, E_2, E_3, E_4$ as errors corresponding to various values of fixed $\Lambda$ and variable P or vice-versa. For their identification, they are colored red, blue, green, and black, accordingly.
\end{Remark}
%\begin{Remark}
%	In the graphical representation of errors, E, $E{\delta1}, E{\delta2}$ correspond to errors without noise,
%	errors with 0.001\% noisy data, errors with 0.003\% noisy data, respectively.
%\end{Remark}

%\begin{Remark}
%	For test function example \ref{Ex1},\ref{Ex2} we used scheme(I) (see algorithm \ref{algo1} also) section 4.1 and test function example \ref{Ex3},\ref{Ex4} we used scheme(II) (see algorith \ref{algo2} also) section 4.2.
%	\end{Remark}
% notaion table:
\begin{table}[H]
	\caption{Table of Notation}
	\label{Tb:1}
	\begin{center}
		\begin{tabular}{|c|c|}
			\hline
			General notation & Notation meaning \\   	
			\hline  
			$\mathcal{LW}$ & Legendre wavelet (LW)\\
			$\Lambda,\Lambda',\Lambda''$ & Number of basis element(upto degree $\Lambda-1,\Lambda'-1,\Lambda''-1$) \& $\Lambda=\mathfrak{g}+1$\\
			$P,P^*,P^{**}$ & Number of node points in LGQ for $\varrho, \varkappa, \eta $ direction vectors\\
			$\Lambda,\Lambda',\Lambda''$ & For time vector ($\varrho$) and space vectors ($\varkappa,\eta$)\\ 
			$h=\left(\frac{1}{N_p-1}\right)$ & Step size\\
			%E& Error without noise\\
%			$E{\delta1}$&  Error correspond to 0.001\% noise of $L_2$-error\\
%			$E{\delta2}$&  Error correspond to 0.003\% noise of Mean error\\
			\hline
		
		\end{tabular}
	\end{center}  
\end{table}
\newpage
\subsection{Solving one dimensional time-space DOF integero-differential equation}
\begin{Example}\label{Ex1}
	We used the proposed approach on the following DOT-SFWSIPDE in one dimension
		\begin{equation*}
	\int_{0}^{1}\rho(\alpha)\frac{\partial^{\alpha}\mathbb{U}(\varkappa,\varrho)}{\partial \varrho^{\alpha}}d\alpha+\mathbb{U}(\varkappa,\varrho)=\mathcal{K^*}\int_{1}^{2}\rho(\beta)\frac{\partial^{\beta}\mathbb{U}(\varkappa,\varrho)}{\partial \varkappa^{\beta}}d\beta+\int_0^\varrho(\varrho-\xi)^{-\frac{1}{2}}\left[\frac{\partial^2\mathbb{U}(\varkappa,\xi)}{\partial \varkappa^2}\right]d\xi+f(\varkappa,\varrho),
	\end{equation*}
	
 	where, the source term   
	\begin{align*}
	f=\frac{2\varkappa^2\varrho(\varrho-1)}{log(\varrho)}+\varkappa^2\varrho^2-\mathcal{K^*}\frac{\varrho^2(\varkappa-1)}{log(\varkappa)}-\frac{32\varrho^{5/2}}{15},
	\end{align*}
	and $\mathbb{U}_{ex}=\varrho^2\varkappa^2$ be the exact solution,
	with the given IC $\mathbb{U}(\varkappa,0)=0$ and the BCs $\mathbb{U}(0,\varrho)=0,~\mathbb{U}(1,\varrho)=\varrho^2$. Here we take the distributed weight function as follows, $\rho(\alpha)=\Gamma{(3-\alpha)}$ and $\rho(\beta)=\frac{\Gamma{(3-\beta)}}{2}$. 
	
For various parameters $P$, $P*$, $\Lambda$, $\Lambda'$, $\mathfrak{R}$, and $\mathfrak{R}'$, numerical results are produced and displayed for this example. This example is solved for $\Lambda=\Lambda'=4, P=P^*=3,5,7,9$ and $\mathfrak{R}=\mathfrak{R}'=1$. 
\begin{itemize}
\item Table \ref{Tb:2} shows numerical results of pointwise errors, $L_2$-errors , $L_{\infty}$-errors and mean errors for $\Lambda=\Lambda'=4$, P=$P^*$=3,5,7,9, and $\mathfrak{R}=\mathfrak{R}'=1$. Here we take value of viscosity constant $\mathcal{K^*}$=1.

\item Figure \ref{fig:1} displays the results related to absolute errors for $\Lambda=\Lambda'=4$, $\mathfrak{R}=\mathfrak{R}'=1$ and $P=P^*=9$ of example \ref{Ex1}.  
\item Figure \ref{fig:2} displays the results related to absolute errors for $\Lambda=\Lambda'=4$, $\mathfrak{R}=\mathfrak{R}'=1$ and P=3,5,7,9 of example \ref{Ex1}. Here $E_1, E_2, E_3, E_4$ denotes the errors at $\varrho$=0.5 corresponds for fixed $\Lambda=\Lambda'=4$, $\mathfrak{R}=\mathfrak{R}'=1$ and P=$P^*$=3,5,7,9, respectively.
\item We labeled the graphs in Figure \ref{fig:2}, to check the nature of produced error. For this purpose, we multiply $E_1, E_2, E_3, E_4$ with  $10^{-9},10^{-6},10^{-6},1$, respectively.
\item Figure \ref{fig:03} refers to exact solution and Figure \ref{fig:04} show the approximate solution for $\Lambda=\Lambda'=4$, $\mathfrak{R}=\mathfrak{R}'=1$ and P=$P^*$=9 of example \ref{Ex1}.
	\end{itemize}

	\begin{table}[H]
	\caption{Results of errors and used CPU time of example \ref{Ex1} for $\mathcal{K^*}$=1, $h=0.05, \Lambda=\Lambda'=4, \mathfrak{R}=\mathfrak{R}'=1$.}
	\label{Tb:2}
	\begin{center}
		\begin{tabular}{|c|c|c|c|c|}
			\hline
			& P=$P^*$=3 & P=$P^*$=5 & P=$P^*$=7&P=$P^*$=9 \\   	
			\hline  
			
			$(\varkappa,{\varrho})$ & ${|\mathbb{U}_{ex}-\mathbb{U}_\mathcal{N}|_\mathcal{LW}}$ & ${|\mathbb{U}_{ex}-\mathbb{U}_\mathcal{N}|_\mathcal{LW}}$ & ${|\mathbb{U}_{ex}-\mathbb{U}_\mathcal{N}|_\mathcal{LW}}$&${|\mathbb{U}_{ex}-\mathbb{U}_\mathcal{N}|_\mathcal{LW}}$ \\
			\hline
			({0.0,0.0})&1.093E-40 & 1.877E-41&1.404E-41 & 3.411E-41\\
			({0.1,0.1})&7.880E-09 & 1.559E-11 & 1.672E-14 & 3.659E-19 \\
			({0.2,0.2})&2.955E-08 & 8.926E-11 & 8.655E-14 & 8.659E-19 \\
			({0.3,0.3})&1.575E-07 & 2.145E-10 & 1.973E-13 & 1.388E-17 \\
			({0.4,0.4})&3.551E-07 & 3.453E-10 & 3.052E-13 & 6.939E-17 \\
			({0.5,0.5})&5.463E-07 & 4.236E-10 & 3.609E-13 & 1.804E-16\\
			({0.6,0.6})&6.276E-07 & 4.031E-10 & 3.305E-13 & 3.608E-16 \\
			({0.7,0.7})&5.162E-07 & 2.717E-10 & 2.121E-13 & 4.718E-16 \\
			({0.8,0.8})&2.145E-07 & 7.118E-11 & 4.735E-14 & 4.441E-16\\
			({0.9,0.9})&1.054E-07 & 8.563E-11 & 7.161E-14 & 1.110E-16 \\
			({1.0,1.0})&1.016E-40 & 1.110E-16 &1.016E-40 & 7.772E-17 \\
			\hline
			$L_2$-error&1.762E-07 & 1.465E-10 & 1.289E-12 & 6.910E-17  \\
			\hline
			$L_{\infty}$-error&6.880E-07 & 5.870E-10 & 5.100E-13 &2.640E-16\\
			\hline
			mean error&3.471E-07 & 2.789E-10 & 2.404E-13 & 1.384E-16 \\
			\hline
			{CPU time(s)}&30.675 & 30.148 & 32.417 &35.396\\
			\hline
		\end{tabular}
	\end{center}  
\end{table}

\begin{figure}[H]
	\centering
	\includegraphics[width=0.7\linewidth]{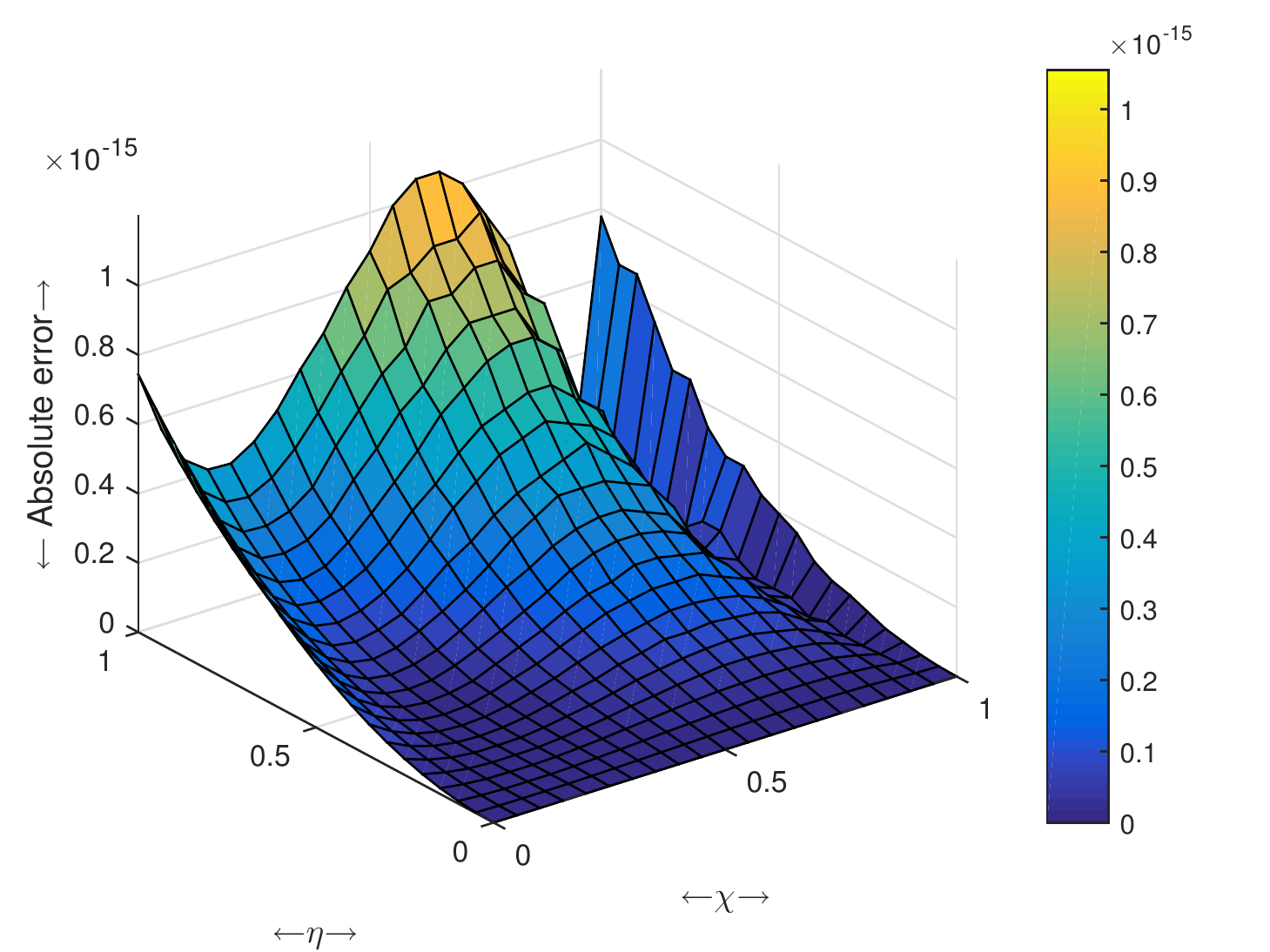}
	\caption{Error graph  for $\Lambda=\Lambda'=4, P=P^*=9, \mathfrak{R}=\mathfrak{R}'=1$ of example \ref{Ex1}.}
	\label{fig:1}
\end{figure}

\begin{figure}[H]
	\centering
	\includegraphics[width=0.7\linewidth]{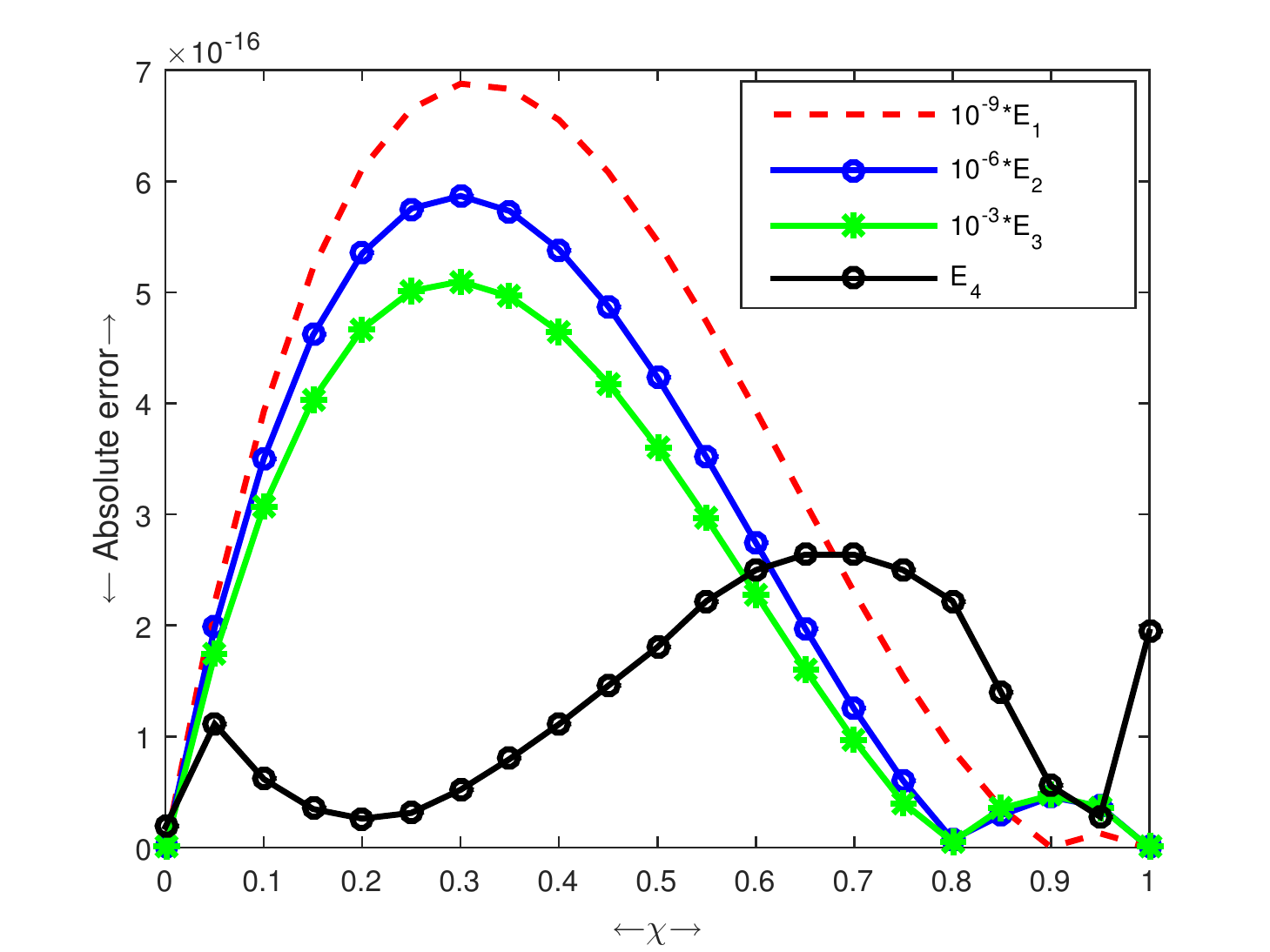}
	\caption{Error graph of  for $\Lambda=\Lambda'=4, P=P^*=3,5,7,9, \mathfrak{R}=\mathfrak{R}'=1$ and $\varrho$=0.5 example \ref{Ex1}.}
	\label{fig:2}
\end{figure}

\begin{figure}[H]
	\centering
	\includegraphics[width=0.7\linewidth]{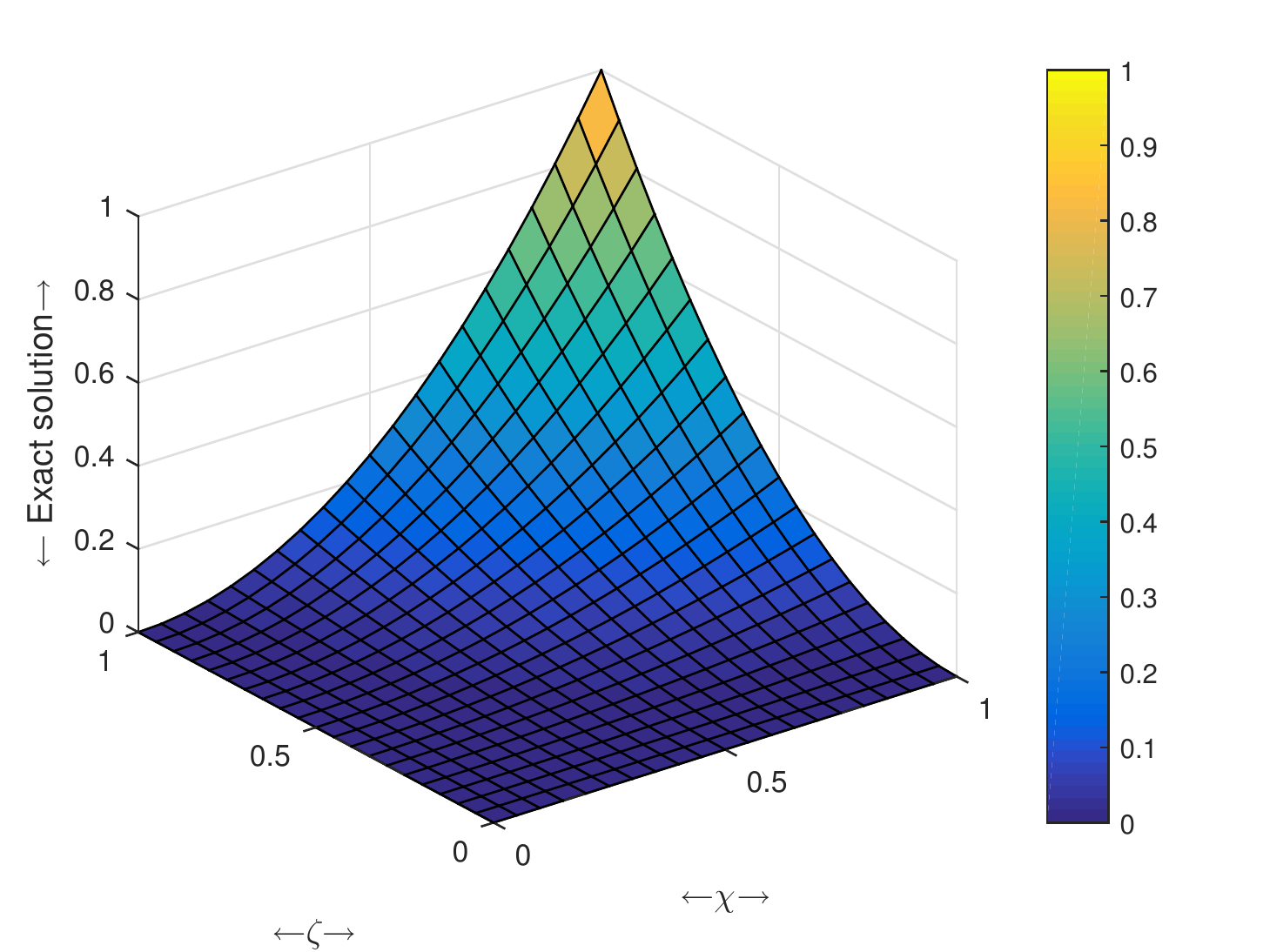}
	\caption{Exact solution graph  for $\Lambda=\Lambda'=4, P=P^*=9, \mathfrak{R}=\mathfrak{R}'=1$ and $\varrho$=0.5 of example \ref{Ex1}.}
	\label{fig:03}
\end{figure}

\begin{figure}[H]
	\centering
	\includegraphics[width=0.7\linewidth]{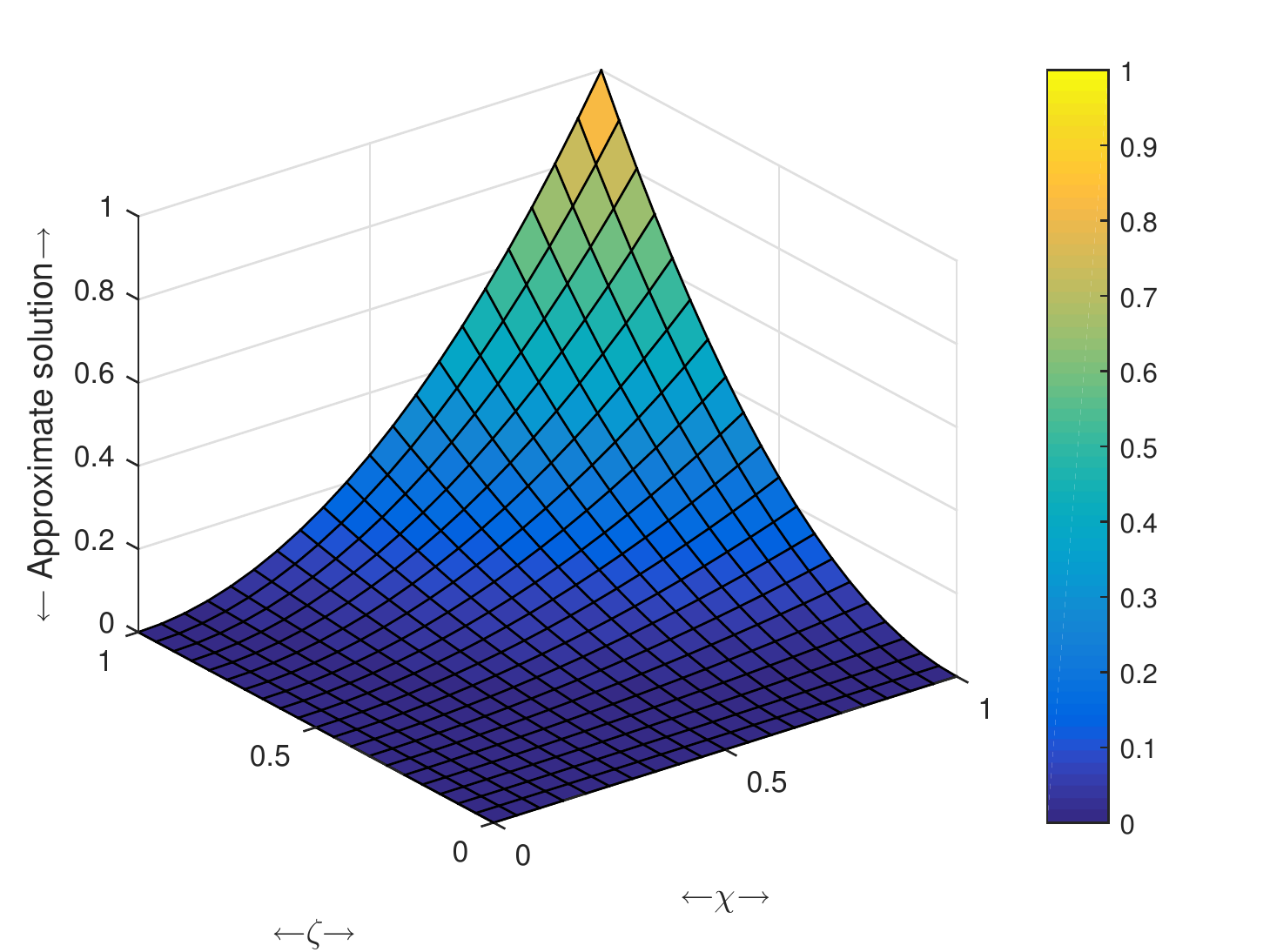}
	\caption{Approximate solution graph  for $\Lambda=\Lambda'=4, P=P^*=9, \mathfrak{R}=\mathfrak{R}'=1$ and $\varrho$=0.5 of example \ref{Ex1}.}
	\label{fig:04}
\end{figure}
\end{Example}

 \begin{Example}\label{Ex2}
 	Take the following DOT--SFWSIPDE 
 \begin{equation*}
 \int_{0}^{1}\rho(\alpha)D_\varrho^\alpha \mathbb{U}(\varkappa,\eta) d\alpha+\mathbb{U}(\varkappa,\varrho)=\mathcal{K^*}\int_{1}^{2}\rho(\beta)D_\varkappa^\beta \mathbb{U}(\varkappa,\eta)d\beta+\int_0^\varrho \frac{\mathbb{U}_{\varkappa\varkappa}}{(\varrho-\xi)^{\frac{1}{2}}}d\xi+f(\varkappa,\varrho)
 \end{equation*}
 	with IC $\mathbb{U}(\varkappa,0)=\varkappa^b$ and BCs $\mathbb{U}(0,\varrho)=\varrho^a$, $\mathbb{U}(1,\varrho)=1+\varrho^a,$ 
 	where, $f=\Gamma{(a+1)}\frac{(\varrho-1)(\varrho^{a-1})}{log(\varrho)}+(\varrho^a+\varkappa^b)-\mathcal{K^*}\Gamma{(b+1)}\frac{(\varkappa-1)\varkappa^{b-2}}{log(\varkappa)}-2b(b-1)\sqrt{\varrho}\varkappa^{b-2}$.  The exact solution for this example is $\mathbb{U}_{ex}=\varkappa^b+\varrho^a$ and parameters are  a=2, b=2. Distributed weight functions are $\rho(\alpha)=\Gamma(a+1-\alpha)$ and $\rho(\beta)=\Gamma(b+1-\beta)$.  	
 	\begin{itemize}
 		\item Table \ref{Tb:3} shown the numerical results of errors and used CPU time for P=$P^*$=9, $\Lambda=\Lambda'$=3,5,7,9 and $\mathfrak{R}=\mathfrak{R}'=1$.
 		\item Figure \ref{fig:3} displays the results of absolute error for fixed $P=P^*=9, \Lambda=\Lambda'=9$ and $\mathfrak{R}=\mathfrak{R}'=1$ of example \ref{Ex2}.
 		\item Figure \ref{fig:4} shows the results of absolute errors at time $\varrho$=0.5 for example \ref{Ex2}. In this figure $E_1,E_2,E_3,E_4$ correspond for fixed P=$P^*$=9, $\mathfrak{R}=\mathfrak{R}'=1$ and $\Lambda=\Lambda'$=3,5,7,9 respectively.
 		\item For labeling the graph in Figures \ref{fig:4} we multiply $E_1,E_2,E_3,E_4$ with  factors $0.3,2,1,1$, respectively.
 	
 	\end{itemize}
 	
   \begin{table}[H]
   	\caption{Results of errors with used CPU time for $\mathcal{K^*}$=1, $h=0.05, P=P^*=9, \mathfrak{R}=\mathfrak{R}'=1$ of example \ref{Ex1}}.
   	\label{Tb:3}
   	\begin{center}
   		\begin{tabular}{|c|c|c|c|c|}
   			\hline
   			& $\Lambda=\Lambda'=3$ & $\Lambda=\Lambda'=5$ & $\Lambda=\Lambda'=7$ & $\Lambda=\Lambda'=9$ \\   	
   			\hline  
   			
   			$(\varkappa,{\varrho})$ & ${|\mathbb{U}_{ex}-\mathbb{U}_\mathcal{N}|_\mathcal{LW}}$ & ${|\mathbb{U}_{ex}-\mathbb{U}_\mathcal{N}|_\mathcal{LW}}$ & ${|\mathbb{U}_{ex}-\mathbb{U}_\mathcal{N}|_\mathcal{LW}}$&${|\mathbb{U}_{ex}-\mathbb{U}_\mathcal{N}|_\mathcal{LW}}$ \\
   			\hline
   			({0.0,0.0})&2.322E-40 & 5.632E-41 &1.549E-41 & 8.185E-41\\
   			({0.1,0.1})&5.898E-15 & 1.818E-15 & 4.996E-16 & 1.079E-15 \\
   			({0.2,0.2})&1.929E-14 & 1.138E-15 & 3.886E-16 & 4.580E-16 \\
   			({0.3,0.3})&3.497E-14 & 8.327E-16 & 2.776E-17 & 4.441E-16 \\
   			({0.4,0.4})&4.540E-14 & 3.331E-15 &9.992E-16 & 2.776E-16 \\
   			({0.5,0.5})&6.051E-14 & 8.882E-15 & 3.331E-16 & 7.772E-16\\
   			({0.6,0.6})&6.772E-14 & 1.110E-15 &1.110E-16 & 4.441E-16 \\
   			({0.7,0.7})&7.327E-14 & 1.776E-14 & 4.441E-16 & 2.220E-16 \\
   			({0.8,0.8})&7.994E-15 & 3.111E-15 & 8.882E-16 & 2.220E-16\\
   			({0.9,0.9})&9.104E-14 & 4.441E-15 & 1.110E-15 & 1.663E-17 \\
   			({1.0,1.0})&1.132E-14 & 2.210E-40 &1.036E-40 & 3.742E-41 \\
   			\hline
   			$L_2$-error&2.794E-14 & 1.889E-15 & 5.030E-16 & 1.115E-16  \\
   			\hline
   			$L_{\infty}$-error&6.050E-14 & 8.880E-15& 2.227E-15 &6.110E-16\\
   			\hline
   			mean error&4.879E-14 & 4.097E-15 & 3.332E-16 & 1.929E-16 \\
   			\hline
   			{CPU time(s)}&18.417 & 56.719 & 164.539 &267.399\\
   			\hline
   		\end{tabular}
   	\end{center}  
   \end{table}

   \begin{figure}[H]
   	\centering
   	\includegraphics[width=0.7\linewidth]{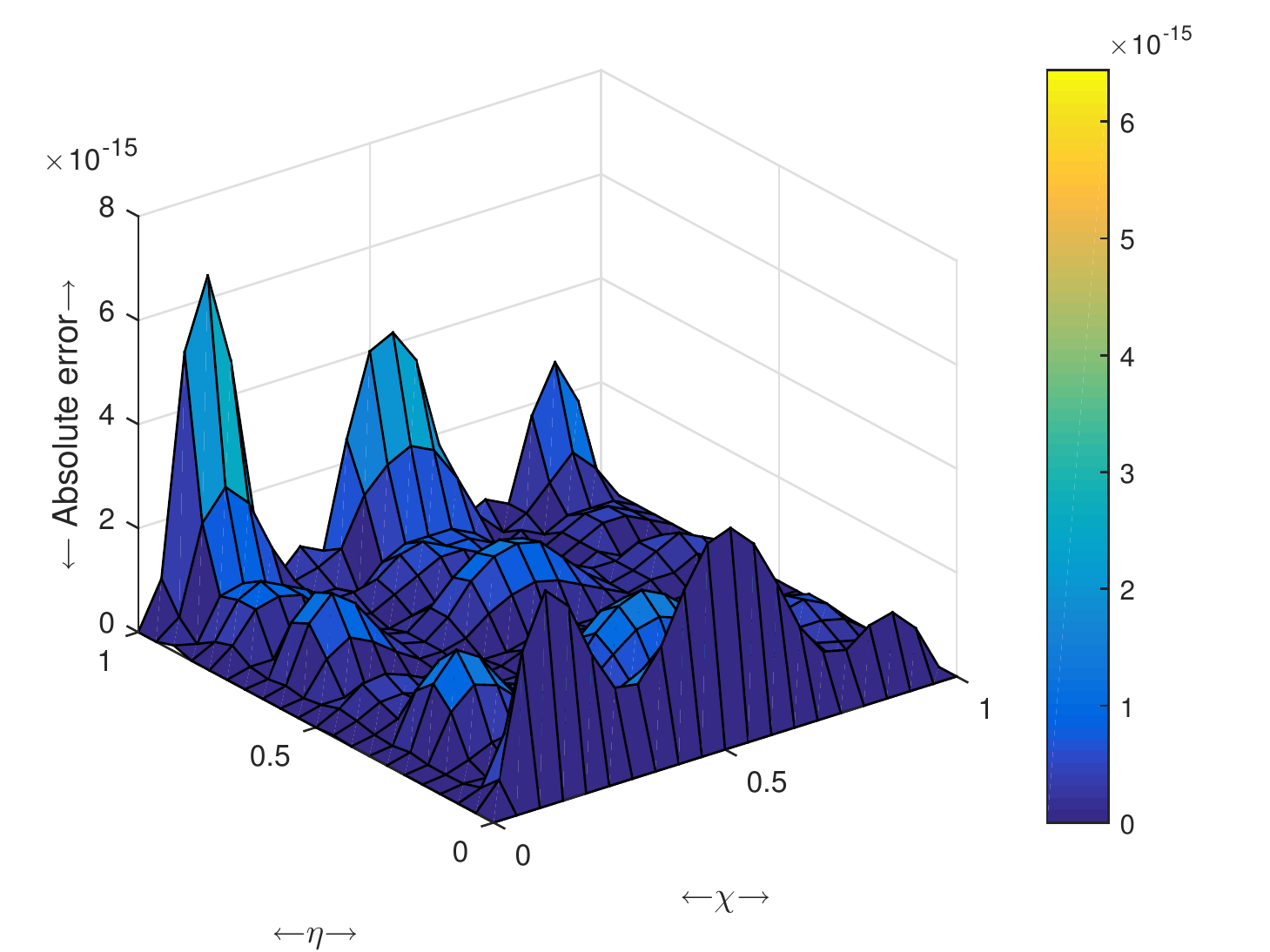}
   	\caption{Error graph  for $\Lambda=\Lambda'=9, P=P^*=9, \mathfrak{R}=\mathfrak{R}'=1$ of example \ref{Ex2}.}
   	\label{fig:3}
   \end{figure}
   \begin{figure}[H]
   	\centering
   	\includegraphics[width=0.7\linewidth]{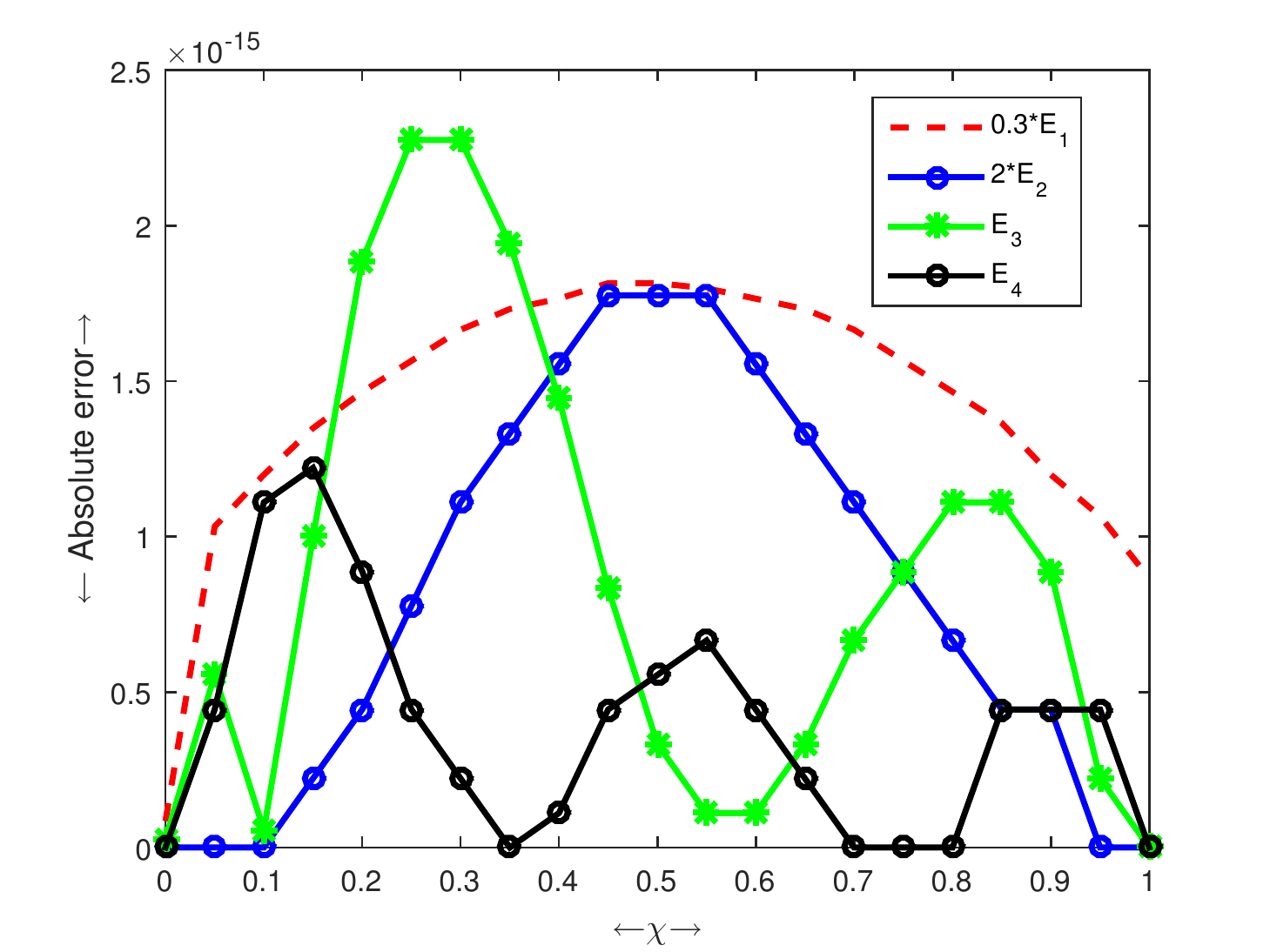}
   	\caption{Error graph for $\Lambda=\Lambda'=3,5,7,9, P=P^*=9, \mathfrak{R}=\mathfrak{R}'=1, \varrho=0.5$ of example \ref{Ex2}.}
   	\label{fig:4}
   \end{figure} 
 	\end{Example}
  
%  	\end{Example}
 \subsection{Two dimensional time-space DOF integro-differential equation}
  \begin{Example}\label{Ex3}
  	We consider the following DOT--SFWSIPDE with nonhomogeneous boundary conditions
  	\begin{eqnarray}
  	\int_{\alpha_1}^{\alpha_2}\rho(\alpha)\frac{\partial^{\alpha}\mathbb{U}(\varkappa,\eta,\varrho)}{\partial \varrho^{\alpha}}d\alpha+\mathbb{U}(\varkappa,\eta,\varrho)=\mathcal{K^*}\int_{\beta_1}^{\beta_2}\rho(\beta)\left[\frac{\partial^{\beta}\mathbb{U}(\varkappa,\eta,\varrho)}{\partial \varkappa^{\beta}}+\frac{\partial^{\beta}\mathbb{U}(\varkappa,\eta,\varrho)}{\partial \eta^{\beta}}\right]d\beta\nonumber\\
  	+\int_0^\varrho(\varrho-\xi)^{-\frac{1}{2}}\left[\frac{\partial^2\mathbb{U}(\varkappa,\eta,\varrho)}{\partial \varkappa^2}+\frac{\partial^2\mathbb{U}(\varkappa,\eta,\varrho)}{\partial \eta^2}\right]d\xi+f(\varkappa,\eta,\varrho),\nonumber
  	\end{eqnarray}
      with the IC $\mathbb{U}(\varkappa,\eta,0)=a\varkappa^2+b\eta^2$, and the BCs
      \begin{align*}
    \mathbb{U}(0,\eta,\varrho)=b\eta^2+c\varrho^2,~~~~~~\mathbb{U}(1,\eta,\varrho)=a+b\eta^2+c\varrho^2,\\
    \mathbb{U}(\varkappa,0,\varrho)=a\varkappa^2+c\varrho^2,~~~~~~\mathbb{U}(\varkappa,1,\varrho)=a\varkappa^2+b+c\varrho^2. 
    \end{align*}
    The source function, $f=\frac{2c\varrho(\varrho-1)}{log(\varrho)}(a\varkappa^2+b\eta^2+c\varrho^2)-\mathcal{K^*}\left(\frac{2a(\varkappa-1)}{log(\varkappa)}+\frac{2b(\eta-1)}{log(\eta)}\right)-4(a+b)\sqrt(\varrho)$ 
    and exact solution is  $\mathbb{U}_{ex}=a\varkappa^2+b\eta^2+c\varrho^2$.The values of parameters are a=1,b=1,c=1 and the value of distributed weight functions are $\rho(\alpha)=\Gamma{(3-\alpha)}$ and $\rho(\beta)=\Gamma{(3-\beta)}$.  
    
   \begin{itemize}
   	\item Table \ref{Tb:4} shows the results of pointwise errors, $L_2$-errors, $L_{\infty}$-errors, mean errors for $\Lambda=\Lambda'=\Lambda''=4$, $\mathfrak{R}=\mathfrak{R}'=\mathfrak{R}''=1$, and P=$P^*$=$P^{**}$=5,7,9,11 of example \ref{Ex3}.
   	\item Figure \ref{fig:5} describes the results of absolute errors for $\Lambda=\Lambda'=\Lambda''=4$, $\mathfrak{R}=\mathfrak{R}'=\mathfrak{R}''=1$ and P=$P^*$=$P^{**}$=11.
   	\item Figures \ref{fig:6} and \ref{fig:7} are referred to exact and approximate solution graph, respectively.   	
   	\end{itemize}
    
   \begin{table}[H]
   	\caption{Results of errors with used CPU time  for $\mathcal{K^*}$=1, $h_\varkappa$=0.05, $h_\eta$=0.05, $\Lambda=\Lambda'=\Lambda''=4, \mathfrak{R}=\mathfrak{R}'=\mathfrak{R}''=1$ of example \ref{Ex3}.}
   	\label{Tb:4}
   	\begin{center}
   		\begin{tabular}{|c|c|c|c|c|}
   			\hline
   			& $P$=$P^*=P^{**}$=5 & $P$=$P^*=P^{**}$=7& $P$=$P^*=P^{**}$=9&$P$=$P^*=P^{**}$=11\\   	
   			\hline  
   			
   			$(\varkappa,{\varrho})$ & ${|\mathbb{U}_{ex}-\mathbb{U}_\mathcal{N}|_\mathcal{LW}}$ & ${|\mathbb{U}_{ex}-\mathbb{U}_\mathcal{N}|_\mathcal{LW}}$ & ${|\mathbb{U}_{ex}-\mathbb{U}_\mathcal{N}|_\mathcal{LW}}$&${|\mathbb{U}_{ex}-\mathbb{U}_\mathcal{N}|_\mathcal{LW}}$ \\
   			\hline
   			({0.0,0.0})&5.551E-17 & 5.551E-17 &7.772E-15 & 2.770E-17\\
   			({0.1,0.1})&3.214E-09 & 2.838E-12 & 1.021E-14 & 1.110E-16 \\
   			({0.2,0.2})&6.840E-09 & 6.036E-12 & 1.299E-14 & 2.220E-16 \\
   			({0.3,0.3})&7.356E-09 & 6.484E-12 & 1.332E-14 & 2.221E-16 \\
   			({0.4,0.4})&5.473E-09 & 4.810E-12 &1.188E-14 & 1.110E-16 \\
   			({0.5,0.5})&3.159E-09 & 2.756E-12 & 1.010E-14 & 1.111E-16\\
   			({0.6,0.6})&1.773E-09 & 1.528E-12 &9.104E-15 & 4.301E-17 \\
   			({0.7,0.7})&1.314E-09 & 1.128E-12 & 8.660E-15 & 3.221E-17 \\
   			({0.8,0.8})&7.592E-10 & 6.530E-13 & 8.438E-15 & 7.115E-17\\
   			({0.9,0.9})&4.800E-10 & 4.243E-13 & 7.55E-15 & 2.221E-16 \\
   			({1.0,1.0})&1.132E-40 & 3.410E-40 &7.550E-15 & 3.742E-41 \\
   			\hline
   			$L_2$-error&3.2961E-09 & 9.516E-12& 9.948E-15 & 1.626E-16  \\
   			\hline
   			$L_{\infty}$-error&1.0747E-08 & 2.906E-12& 1.598E-14 &4.440E-16\\
   			\hline
   			mean error&3.2043E-09 & 2.8151E-12 & 1.017E-14 & 2.430E-16 \\
   			\hline
   			{CPU time(s)}&142.440 & 148.064 & 148.916 &152.864\\
   			\hline
   		\end{tabular}
   	\end{center}  
   \end{table}
      
   \begin{figure}[H]
   	\centering
   	\includegraphics[width=0.7\linewidth]{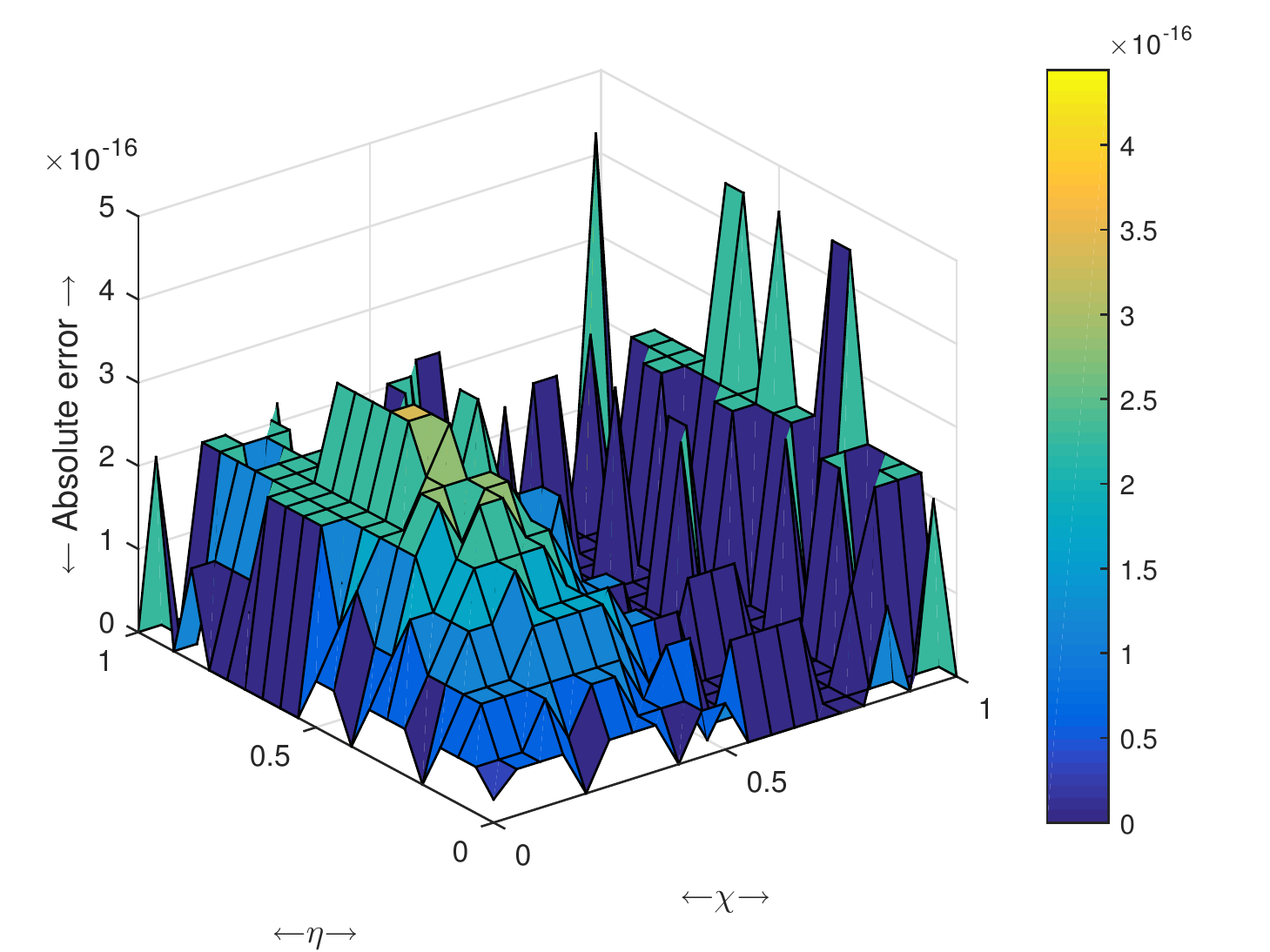}
   	\caption{Error graph for $\Lambda=\Lambda'=\Lambda''=4$, $P=P^*=P^{**}=11$, $\mathfrak{R}=\mathfrak{R}'=\mathfrak{R}''=1$  of example \ref{Ex3}.}
   	\label{fig:5}
   \end{figure}
   \begin{figure}[H]
   	\centering
   	\includegraphics[width=0.7\linewidth]{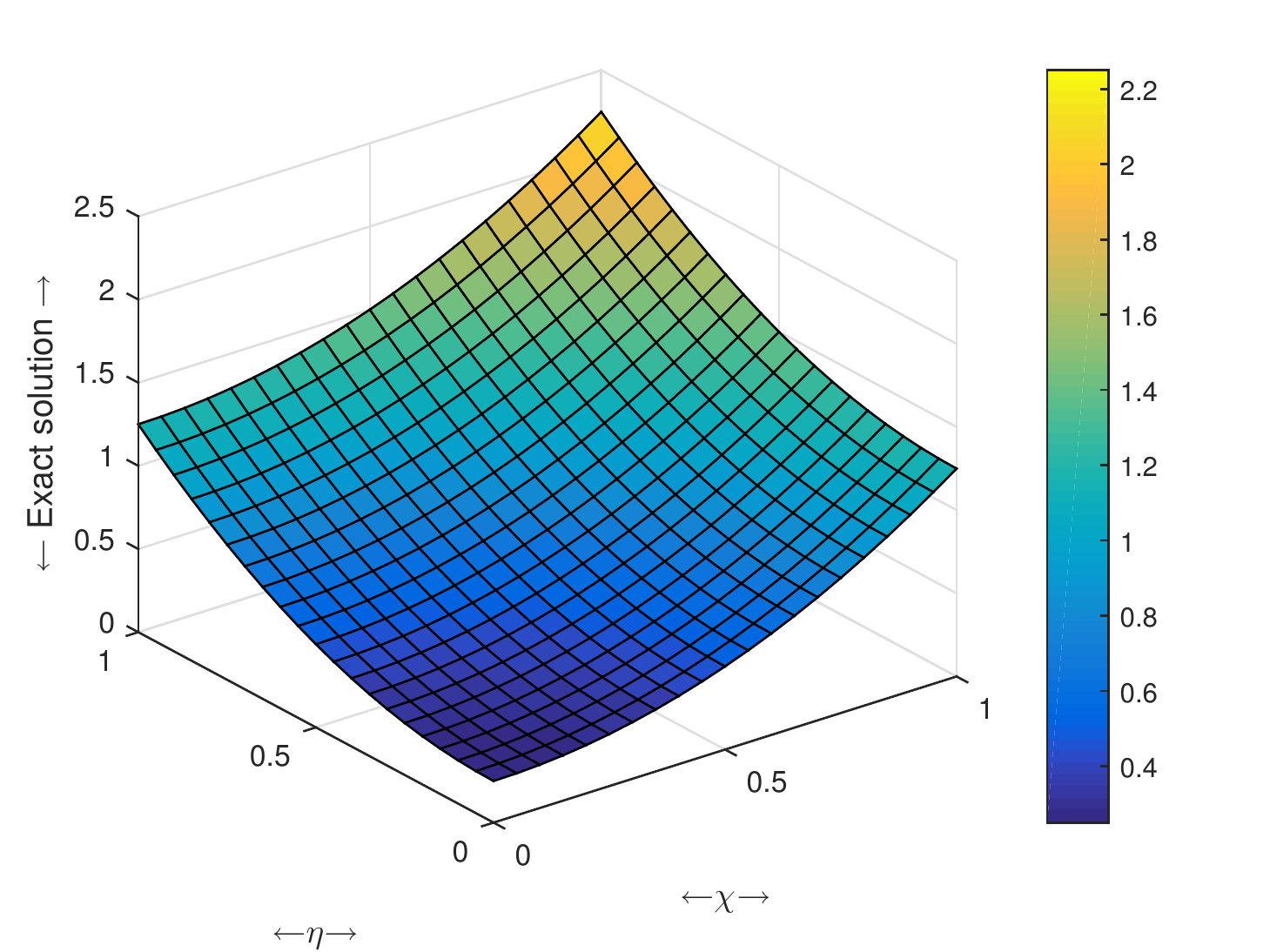}
   	\caption{Exact solution graph  for $\varrho$=0.5 of example \ref{Ex3}}
   	\label{fig:6}
   \end{figure}
   \begin{figure}[H]
   	\centering
   	\includegraphics[width=0.7\linewidth]{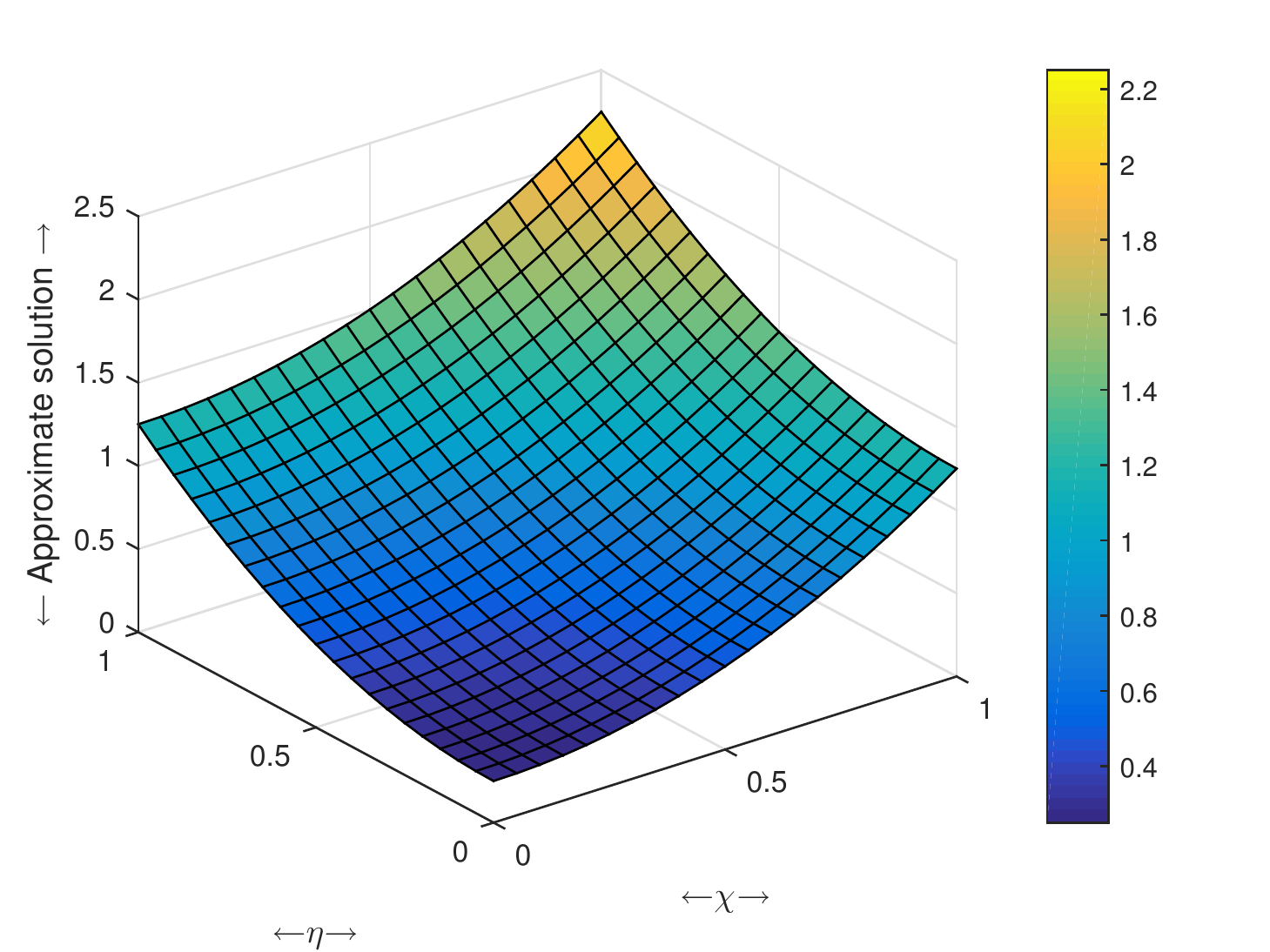}
   	\caption{Approximate solution graph  for $\Lambda=\Lambda'=\Lambda''=5, P=P^*=P^{**}=4, \mathfrak{R}=\mathfrak{R}'=\mathfrak{R}''=1$ and $\varrho$=0.5 of example \ref{Ex3}.}
   	\label{fig:7}
   \end{figure}
   
\end{Example}

\begin{Example}\label{Ex4}
We consider the following DOT-SFWSIPDE in 2D
	\begin{align*}
	\int_{0}^{1}\rho(\alpha)D_\varrho^\alpha \mathbb{U}(\varkappa,\eta,\varrho) d\alpha+\mathbb{U}(\varkappa,\eta,\varrho)=\mathcal{K^*}\int_{1}^{2}\rho(\beta)[D_\varkappa^\beta \mathbb{U}(\varkappa,\eta,\varrho)+D_\eta^\beta \mathbb{U}(\varkappa,\eta,\varrho)]d\beta\\
	+\int_0^\varrho \frac{\mathbb{U}_{\varkappa\varkappa}(\varkappa,\eta,\varrho)+\mathbb{U}_{\eta\eta}(\varkappa,\eta,\varrho)}{(\varrho-\xi)^{\frac{1}{2}}}d\xi+f(\varkappa,\eta,\varrho),
	\end{align*}
	where, 
	\begin{equation*}
	f(\varkappa,\eta,\varrho)=a(\varkappa+\eta)\frac{(\varrho-1)}{log(\varrho)}+(\varkappa\eta+\eta\varrho+\varrho\varkappa)-b(\eta+\varrho)\frac{(\varkappa-1)}{\varkappa log(\varkappa)}-b(\varkappa+\varrho)\frac{(\eta-1)}{\eta log(\eta)}. 
	\end{equation*}
	Furthermore, the equation is facilitate with the IC $\mathbb{U}(\varkappa,\eta,0)=\varkappa\eta$ and BCs \begin{align*}
	\mathbb{U}(0,\eta,\varrho)=\eta\varrho,~~~~~~\mathbb{U}(1,\eta,\varrho)=\eta+\varrho+\eta\varrho,\\
	\mathbb{U}(\varkappa,0,\varrho)=\varrho\varkappa,~~~~~~\mathbb{U}(\varkappa,1,\varrho)=\varkappa+\varrho+\varkappa\varrho. 
	\end{align*}
	 The exact solution is $\mathbb{U}_{ex}=\varkappa\eta+\eta\varrho+\varrho\varkappa$. Here we take distributed weight functions as $\rho(\alpha)=a\Gamma(2-\alpha)$ and $\rho(\beta)=b\Gamma{(2-\beta)}$. Numerical results are obtained and depicted for $\Lambda=\Lambda'=\Lambda''=3, P=P^*=P^{**}=4,6,8,10, \mathfrak{R}=\mathfrak{R}'=\mathfrak{R}''=1$. 
	\begin{itemize}
		\item Table \ref{Tb:5} shows the numerical results of errors for $\Lambda=\Lambda'=\Lambda''=3$, $\mathfrak{R}=\mathfrak{R}'=\mathfrak{R}''=1$ and P=$P^*=P^{**}$=4,6,8,10.
		\item The graph of absolute errors for $\Lambda=\Lambda'=\Lambda''=3$, $\mathfrak{R}=\mathfrak{R}'=\mathfrak{R}''=1$ and P=$P^*=P^{**}$=8 is shown in Figure \ref{fig:8}.
	\end{itemize}
	\begin{table}[H]
		\caption{Results of erros with used CPU time for $\mathcal{K^*}$=1, $h_\varkappa$=0.05, $h_\eta$=0.05, $\Lambda=\Lambda'=\Lambda''=3, \mathfrak{R}=\mathfrak{R}'=\mathfrak{R}''=1$ of example \ref{Ex4}.}
		\label{Tb:5}
		\begin{center}
			\begin{tabular}{|c|c|c|c|c|}
				\hline
				& $P$=4,$a$=1,$b$=0.5 & $P$=6,$a$=0.5,$b$=1 & $P$=8,$a$=2,$b$=1.5 &$P$=10,$a$=1.5,$b$=2\\   	
				\hline  
				
				$(\varkappa,{\varrho})$ & ${|\mathbb{U}_{ex}-\mathbb{U}_\mathcal{N}|_\mathcal{LW}}$ & ${|\mathbb{U}_{ex}-\mathbb{U}_\mathcal{N}|_\mathcal{LW}}$ & ${|\mathbb{U}_{ex}-\mathbb{U}_\mathcal{N}|_\mathcal{LW}}$&${|\mathbb{U}_{ex}-\mathbb{U}_\mathcal{N}|_\mathcal{LW}}$ \\
				\hline
				({0.0,0.0})&4.471E-15 & 4.700E-16 &3.727E-17 & 2.282E-17\\
				({0.1,0.1})&5.551E-15 & 5.154E-16 & 2.776E-17 & 2.7345E-17 \\
				({0.2,0.2})&5.251E-15 & 3.245E-16 & 2.540E-17 & 2.568E-18 \\
				({0.3,0.3})&1.110E-14 & 1.658E-16 & 2.115E-17 & 5.235E-18 \\
				({0.4,0.4})&1.111E-14 & 2.124E-16 &1.758E-17 & 5.267E-18 \\
				({0.5,0.5})&1.234E-15 & 2.554E-16 & 5.667E-17 & 5.246E-18\\
				({0.6,0.6})&1.236E-15 & 2.650E-16 &9.457E-17 & 4.324E-18 \\
				({0.7,0.7})&2.220E-14 & 1.254E-16 & 5.325E-17 & 4.821E-17 \\
				({0.8,0.8})&1.348E-15 & 4.441E-16 & 4.257E-17 & 4.857E-16\\
				({0.9,0.9})&1.257E-15 & 8.335E-16 & 3.448E-17 & 2.325E-17 \\
				({1.0,1.0})&1.532E-40 & 2.412E-40 &3.550E-40 & 7.732E-41 \\
				\hline
				$L_2$-error&8.108E-15 & 7.876E-16& 9.788E-17 & 3.792E-17  \\
				\hline
				$L_{\infty}$-error&4.440E-14 & 4.440E-16& 4.441E-16 &7.795E-17\\
				\hline
				mean error&3.700E-15 & 2.114E-16& 3.700E-17 & 2.231E-17 \\
				\hline
				{CPU time(s)}&68.884 & 71.312 & 72.934 &81.361\\
				\hline
			\end{tabular}
		\end{center}  
	\end{table}

\begin{figure}[H]
	\centering
	\includegraphics[width=0.7\linewidth]{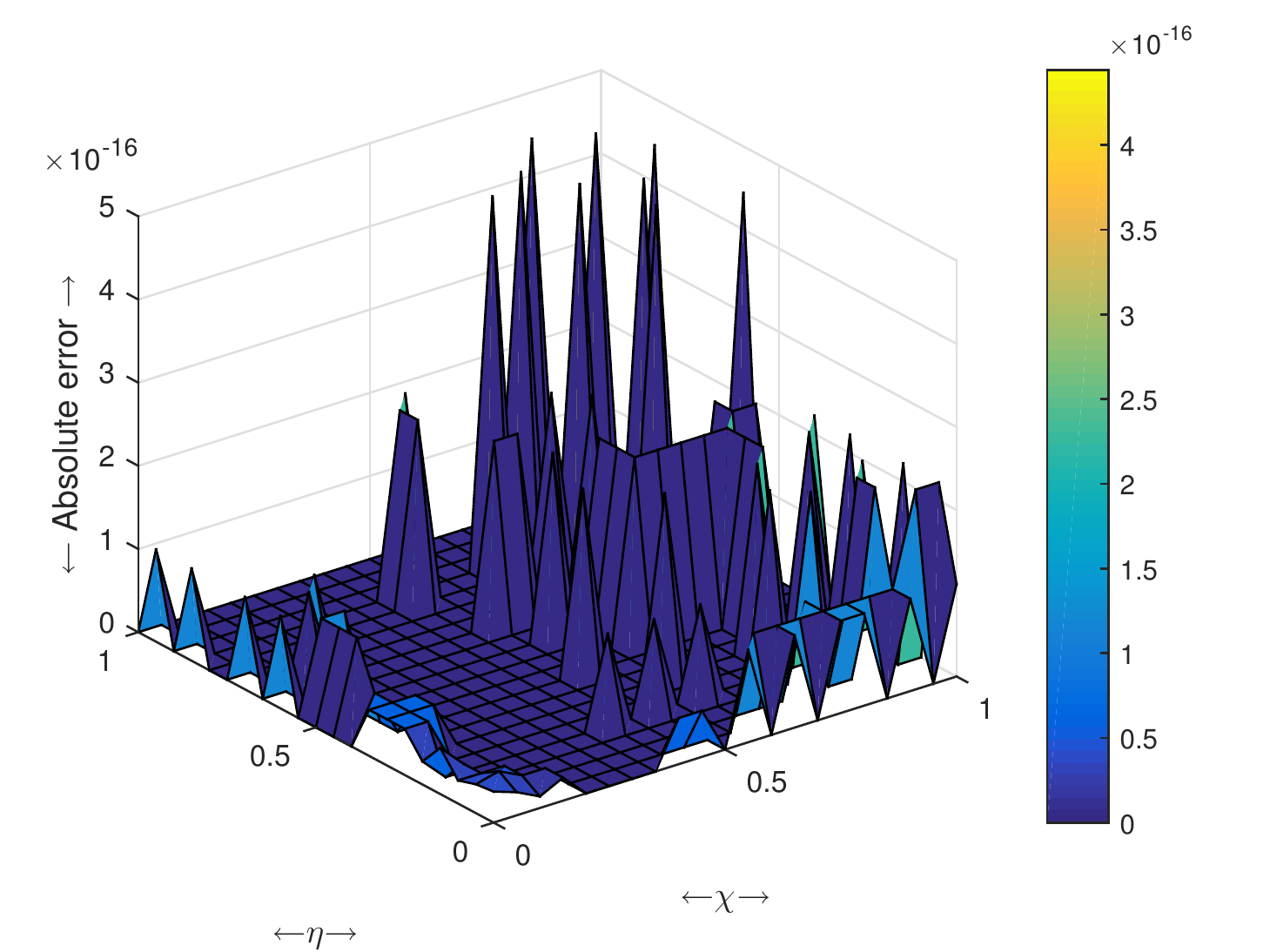}
	\caption{Error graph  for $\Lambda=\Lambda'=\Lambda''=3, P=8, \mathfrak{R}=\mathfrak{R}'=\mathfrak{R}''=1$ of example \ref{Ex4}.}
	\label{fig:8}
\end{figure}
%\begin{figure}[H]
%	\centering
%	\includegraphics[width=0.7\linewidth]{9Example5exactm3n8}
%	\caption{Exact solution graph of example \ref{Ex4} for M=M'=M''=3, N=8, K=K'=K''=1 via LW}
%	\label{fig:9}
%\end{figure}
%\begin{figure}[H]
%	\centering
%	\includegraphics[width=0.7\linewidth]{9Example5approxm3n8}
%	\caption{Approximate solution graph of example \ref{Ex4} for M=M'=M''=3, N=8, K=K'=K''=1 via LW}
%	\label{fig:10}
%\end{figure}

\end{Example}

\section{Conclusion}\label{sec:8}
In this manuscript, a robust numerical method based on standard tau approach and collocation technique has been constructed for solving multi-dimensional DOT-SFWSIPDE. To this end, the original defined problem is converted into a system of linear algebraic equations using a variety of operational matrices, the standard tau technique, and the LGQ rule. Four test examples are performed for testing of the proposed method. To determined the method's efficiency and accuracy pointwise errors, $L_{2}$-errors, $L_{\infty}$-errors are evaluated and summed up in Tables \ref{Tb:2}-\ref{Tb:5} and Figures \ref{fig:1}-\ref{fig:8}. The used CPU time is also evaluated. The produced numerical results  conclude that the method has the capability of providing high accuracy with minimal computing efforts. It is also clear from the error Tables \ref{Tb:2}-\ref{Tb:5} that error is minimized when we continuously increase basis functions or increase the nodes points in LGQ.  Hence it is observed that the proposed method is proved to be an effective numerical procedure in terms of accuracy \& computational cost for handling  multi-dimensional DOT-SFWSIPDE. The method is also applicable on nonlinear  time-space DOFDEs, which is one of our goals for future study. In this article, we have provided the convergence analysis for 1D case only, the analysis for 2D case is a task for future study. \\

%In this work, a new approach for solving multidimensional distributed order time-space fractional weakly singular integro differential equations based on operational matrices of Legendre wavelet basis function has been proposed. To that aim, the original problem is transformed into a system of linear algebraic equations using a combination of operational matrices, the standard tau method, and the Legendre Gauss-quadrature formula. Four test examples are used to test the approach. To assess the method's efficiency and correctness, $L_{2}$-errors, $L_{\infty}$-errors, pointwise errors, mean errors, and CPU time are computed and summarised in Tables \ref{Tb:2}-\ref{Tb:5} and figures \ref{fig:1}-\ref{fig:8}. When the numerical results obtained by the proposed approaches are compared to the exact solutions, it is clear that the method is capable of delivering good accuracy with minimal computational cost. The error Tables \ref{Tb:2}-\ref{Tb:5} show that when the basis functions or node points in the Gauss-quadrature formula increase, the error reduces. Furthermore, numerical results show that Legendre wavelets basis function produce better results with less CPU time for a small number of basis functions. As a result, we conclude that using wavelets operational matrices in conjunction with the standard tau method and the Gauss-Legendre quadrature formula to solve multidimensional distributed order time-space fractional weakly singular integro differential equations is an excellent solution procedure technique in terms of computational cost and accuracy.\\

\textbf{\Large Acknowledgment}\label{sec:9}\\

The first author acknowledges the financial support from council of scientific \& industrial research (CSIR), India, under senior research fellow (SRF) scheme. The second author acknowledges the financial support from national board for higher mathematics, department of atomic energy, India, with sanctioned order no. 0204/17/2019/R \& D-II/9722.     
 
%\section*{References}
\bibliographystyle{elsarticle-num}
\bibliography{Distributed_integero1}

\begin{thebibliography}{10}
\expandafter\ifx\csname url\endcsname\relax
  \def\url#1{\texttt{#1}}\fi
\expandafter\ifx\csname urlprefix\endcsname\relax\def\urlprefix{URL }\fi
\expandafter\ifx\csname href\endcsname\relax
  \def\href#1#2{#2} \def\path#1{#1}\fi

\bibitem{podlubny1998fractional}
I.~Podlubny, Fractional differential equations: an introduction to fractional
  derivatives, fractional differential equations, to methods of their solution
  and some of their applications, Elsevier, 1998.

\bibitem{bagley1985fractional}
R.~L. Bagley, P.~J. Torvik, Fractional calculus in the transient analysis of
  viscoelastically damped structures, AIAA journal 23~(6) (1985) 918--925.

\bibitem{ichise1971analog}
M.~Ichise, Y.~Nagayanagi, T.~Kojima, An analog simulation of non-integer order
  transfer functions for analysis of electrode processes, Journal of
  Electroanalytical Chemistry and Interfacial Electrochemistry 33~(2) (1971)
  253--265.

\bibitem{hajipour2018efficient}
M.~Hajipour, A.~Jajarmi, D.~Baleanu, An efficient nonstandard finite difference
  scheme for a class of fractional chaotic systems, Journal of Computational
  and Nonlinear Dynamics 13~(2) (2018).

\bibitem{baleanu2017motion}
D.~Baleanu, A.~Jajarmi, J.~H. Asad, T.~Blaszczyk, The motion of a bead sliding
  on a wire in fractional sense (2017).

\bibitem{jajarmi2018new}
A.~Jajarmi, M.~Hajipour, E.~Mohammadzadeh, D.~Baleanu, A new approach for the
  nonlinear fractional optimal control problems with external persistent
  disturbances, Journal of the Franklin Institute 355~(9) (2018) 3938--3967.

\bibitem{baleanu2018nonlinear}
D.~Baleanu, A.~Jajarmi, M.~Hajipour, On the nonlinear dynamical systems within
  the generalized fractional derivatives with mittag--leffler kernel, Nonlinear
  dynamics 94~(1) (2018) 397--414.

\bibitem{sierociuk2013modelling}
D.~Sierociuk, A.~Dzieli{\'n}ski, G.~Sarwas, I.~Petras, I.~Podlubny,
  T.~Skovranek, Modelling heat transfer in heterogeneous media using fractional
  calculus, Philosophical Transactions of the Royal Society A: Mathematical,
  Physical and Engineering Sciences 371~(1990) (2013) 20120146.

\bibitem{chow2005fractional}
T.~Chow, Fractional dynamics of interfaces between soft-nanoparticles and rough
  substrates, Physics Letters A 342~(1-2) (2005) 148--155.

\bibitem{yang2017new}
X.-J. Yang, et~al., New general fractional-order rheological models with
  kernels of mittag-leffler functions, Rom. Rep. Phys 69~(4) (2017) 118.

\bibitem{gao2016fractional}
F.~Gao, X.-J. Yang, Fractional maxwell fluid with fractional derivative without
  singular kernel, Thermal Science 20~(suppl. 3) (2016) 871--877.

\bibitem{odibat2010study}
Z.~M. Odibat, A study on the convergence of variational iteration method,
  Mathematical and Computer Modelling 51~(9-10) (2010) 1181--1192.

\bibitem{momani2007generalized}
S.~Momani, Z.~Odibat, V.~S. Erturk, Generalized differential transform method
  for solving a space-and time-fractional diffusion-wave equation, Physics
  Letters A 370~(5-6) (2007) 379--387.

\bibitem{odibat2008generalized}
Z.~Odibat, S.~Momani, V.~S. Erturk, Generalized differential transform method:
  application to differential equations of fractional order, Applied
  Mathematics and Computation 197~(2) (2008) 467--477.

\bibitem{garg2011solution}
M.~Garg, A.~Sharma, Solution of space-time fractional telegraph equation by
  adomian decomposition method, Journal of Inequalities and Special Functions
  2~(1) (2011) 1--7.

\bibitem{ray2005approximate}
S.~S. Ray, R.~Bera, An approximate solution of a nonlinear fractional
  differential equation by adomian decomposition method, Applied Mathematics
  and Computation 167~(1) (2005) 561--571.

\bibitem{chen2012error}
Y.~Chen, M.~Yi, C.~Yu, Error analysis for numerical solution of fractional
  differential equation by haar wavelets method, Journal of Computational
  Science 3~(5) (2012) 367--373.

\bibitem{babolian2007numerical}
E.~Babolian, F.~Fattahzadeh, Numerical solution of differential equations by
  using chebyshev wavelet operational matrix of integration, Applied
  Mathematics and computation 188~(1) (2007) 417--426.

\bibitem{saadatmandi2011tau}
A.~Saadatmandi, M.~Dehghan, A tau approach for solution of the space fractional
  diffusion equation, Computers \& Mathematics with Applications 62~(3) (2011)
  1135--1142.

\bibitem{srivastava2021efficient}
N.~Srivastava, A.~Singh, Y.~Kumar, V.~K. Singh, Efficient numerical algorithms
  for riesz-space fractional partial differential equations based on finite
  difference/operational matrix, Applied Numerical Mathematics 161 (2021)
  244--274.

\bibitem{dehghan2019error}
M.~Dehghan, M.~Abbaszadeh, Error estimate of finite element/finite difference
  technique for solution of two-dimensional weakly singular integro-partial
  differential equation with space and time fractional derivatives, Journal of
  Computational and Applied Mathematics 356 (2019) 314--328.

\bibitem{abbaszadeh2019meshless}
M.~Abbaszadeh, M.~Dehghan, Meshless upwind local radial basis function-finite
  difference technique to simulate the time-fractional distributed-order
  advection--diffusion equation, Engineering with computers (2019) 1--17.

\bibitem{abbaszadeh2019alternating}
M.~Abbaszadeh, M.~Dehghan, Y.~Zhou, Alternating direction implicit-spectral
  element method (adi-sem) for solving multi-dimensional generalized modified
  anomalous sub-diffusion equation, Computers \& Mathematics with Applications
  78~(5) (2019) 1772--1792.

\bibitem{dehghan2010solving}
M.~Dehghan, J.~Manafian, A.~Saadatmandi, Solving nonlinear fractional partial
  differential equations using the homotopy analysis method, Numerical Methods
  for Partial Differential Equations: An International Journal 26~(2) (2010)
  448--479.

\bibitem{sun2016some}
H.~Sun, Z.~Z. Sun, G.~H. Gao, Some high order difference schemes for the space
  and time fractional bloch-torrey equations, Applied Mathematics and
  Computation 281 (2016) 356--380.

\bibitem{jiao2012distributed}
Z.~Jiao, Y.-Q. Chen, I.~Podlubny, Distributed-order dynamic systems: Stability,
  Simulation, Applications and Perspectives, London (2012).

\bibitem{Caputo2001}
M.~Caputo, Distributed order differential equations modelling dielectric
  induction and diffusion, Fractional Calculus and Applied Analysis 4~(4)
  (2001) 421--442.

\bibitem{sokolov2004distributed}
I.~Sokolov, A.~Chechkin, J.~Klafter, Distributed-order fractional kinetics,
  arXiv preprint cond-mat/0401146 (2004).

\bibitem{umarov2006random}
S.~Umarov, S.~Steinberg, et~al., Random walk models associated with distributed
  fractional order differential equations, in: High dimensional probability,
  Institute of Mathematical Statistics, 2006, pp. 117--127.

\bibitem{morgado2019black}
L.~Morgado, M.~Rebelo, Black-scholes equation with distributed order in time,
  in: Progress in Industrial Mathematics at ECMI 2018, Springer, 2019, pp.
  313--319.

\bibitem{kumar2020wavelet}
Y.~Kumar, S.~Singh, N.~Srivastava, A.~Singh, V.~K. Singh, Wavelet approximation
  scheme for distributed order fractional differential equations, Computers \&
  Mathematics with Applications 80~(8) (2020) 1985--2017.

\bibitem{abbaszadeh2019error}
M.~Abbaszadeh, Error estimate of second-order finite difference scheme for
  solving the riesz space distributed-order diffusion equation, Applied
  Mathematics Letters 88 (2019) 179--185.

\bibitem{gao2017temporal}
G.~H. Gao, A.~A. Alikhanov, Z.~Z. Sun, The temporal second order difference
  schemes based on the interpolation approximation for solving the time
  multi-term and distributed-order fractional sub-diffusion equations, Journal
  of Scientific Computing 73~(1) (2017) 93--121.

\bibitem{abbaszadeh2020crank}
M.~Abbaszadeh, M.~Dehghan, Y.~Zhou, Crank--nicolson/galerkin spectral method
  for solving two-dimensional time-space distributed-order weakly singular
  integro-partial differential equation, Journal of Computational and Applied
  Mathematics 374 (2020) 112739.

\bibitem{christensen2012mechanics}
R.~M. Christensen, Mechanics of composite materials, Courier Corporation, 2012.

\bibitem{miller1978integrodifferential}
R.~Miller, An integrodifferential equation for rigid heat conductors with
  memory, Journal of Mathematical Analysis and Applications 66~(2) (1978)
  313--332.

\bibitem{renardy1989mathematical}
M.~Renardy, Mathematical analysis of viscoelastic flows, Annual review of fluid
  mechanics 21 (1989) 21--36.

\bibitem{gorenflo2013fundamental}
R.~Gorenflo, Y.~Luchko, M.~Stojanovi{\'c}, Fundamental solution of a
  distributed order time-fractional diffusion-wave equation as probability
  density, Fractional Calculus and Applied Analysis 16~(2) (2013) 297--316.

\bibitem{li2017analyticity}
Z.~Li, Y.~Luchko, M.~Yamamoto, Analyticity of solutions to a distributed order
  time-fractional diffusion equation and its application to an inverse problem,
  Computers \& Mathematics with Applications 73~(6) (2017) 1041--1052.

\bibitem{morgado2017numerical}
M.~L. Morgado, M.~Rebelo, L.~L. Ferras, N.~J. Ford, Numerical solution for
  diffusion equations with distributed order in time using a chebyshev
  collocation method, Applied Numerical Mathematics 114 (2017) 108--123.

\bibitem{saadatmandi2010new}
A.~Saadatmandi, M.~Dehghan, A new operational matrix for solving
  fractional-order differential equations, Computers \& mathematics with
  applications 59~(3) (2010) 1326--1336.

\bibitem{li2010haar}
Y.~Li, W.~Zhao, Haar wavelet operational matrix of fractional order integration
  and its applications in solving the fractional order differential equations,
  Applied Mathematics and Computation 216~(8) (2010) 2276--2285.

\bibitem{keshavarz2014bernoulli}
E.~Keshavarz, Y.~Ordokhani, M.~Razzaghi, Bernoulli wavelet operational matrix
  of fractional order integration and its applications in solving the
  fractional order differential equations, Applied Mathematical Modelling
  38~(24) (2014) 6038--6051.

\bibitem{bhrawy2015review}
A.~H. Bhrawy, T.~M. Taha, J.~A.~T. Machado, A review of operational matrices
  and spectral techniques for fractional calculus, Nonlinear Dynamics 81~(3)
  (2015) 1023--1052.

\bibitem{pourbabaee2019novel}
M.~Pourbabaee, A.~Saadatmandi, A novel legendre operational matrix for
  distributed order fractional differential equations, Applied Mathematics and
  Computation 361 (2019) 215--231.

\bibitem{singh2017numerical}
S.~Singh, V.~K. Patel, V.~K. Singh, E.~Tohidi, Numerical solution of nonlinear
  weakly singular partial integro-differential equation via operational
  matrices, Applied Mathematics and Computation 298 (2017) 310--321.

\bibitem{singh2018application}
S.~Singh, V.~K. Patel, V.~K. Singh, Application of wavelet collocation method
  for hyperbolic partial differential equations via matrices, Applied
  Mathematics and Computation 320 (2018) 407--424.

\bibitem{singh2018convergence}
S.~Singh, V.~K. Patel, V.~K. Singh, Convergence rate of collocation method
  based on wavelet for nonlinear weakly singular partial integro-differential
  equation arising from viscoelasticity, Numerical Methods for Partial
  Differential Equations 34~(5) (2018) 1781--1798.

\bibitem{chen1997haar}
C.~Chen, C.~Hsiao, Haar wavelet method for solving lumped and
  distributed-parameter systems, IEE Proceedings-Control Theory and
  Applications 144~(1) (1997) 87--94.

\bibitem{sahu2015legendre}
P.~K. Sahu, S.~S. Ray, Legendre wavelets operational method for the numerical
  solutions of nonlinear volterra integro-differential equations system,
  Applied mathematics and computation 256 (2015) 715--723.

\bibitem{heydari2014legendre}
M.~H. Heydari, M.~R. Hooshmandasl, F.~Mohammadi, Legendre wavelets method for
  solving fractional partial differential equations with dirichlet boundary
  conditions, Applied Mathematics and Computation 234 (2014) 267--276.

\bibitem{meng2015legendre}
Z.~Meng, L.~Wang, H.~Li, W.~Zhang, Legendre wavelets method for solving
  fractional integro-differential equations, International Journal of Computer
  Mathematics 92~(6) (2015) 1275--1291.

\bibitem{mehra2018wavelets}
M.~Mehra, Mehra, Ahmad, Wavelets Theory and Its Applications, Springer, 2018.

\bibitem{kumar2021computational}
Y.~Kumar, V.~K. Singh, Computational approach based on wavelets for financial
  mathematical model governed by distributed order fractional differential
  equation, Mathematics and Computers in Simulation (2021).

\bibitem{behera2018adaptive}
R.~Behera, M.~Mehra, An adaptive wavelet collocation method for solution of the
  convection-dominated problem on a sphere, International Journal of
  Computational Methods 15~(08) (2018) 1850080.

\bibitem{alikhanov2015numerical}
A.~A. Alikhanov, Numerical methods of solutions of boundary value problems for
  the multi-term variable-distributed order diffusion equation, Applied
  Mathematics and Computation 268 (2015) 12--22.

\bibitem{hildebrand1987introduction}
F.~B. Hildebrand, Introduction to numerical analysis, Courier Corporation,
  1987.

\bibitem{mohammadi2011new}
F.~Mohammadi, M.~Hosseini, A new legendre wavelet operational matrix of
  derivative and its applications in solving the singular ordinary differential
  equations, Journal of the Franklin Institute 348~(8) (2011) 1787--1796.

\bibitem{zaky2020multi}
M.~A. Zaky, J.~T. Machado, Multi-dimensional spectral tau methods for
  distributed-order fractional diffusion equations, Computers \& Mathematics
  with Applications 79~(2) (2020) 476--488.

\end{thebibliography}
%\bibliography{somveer3}
%\bibliographystyle{ieee}
\end{document}